\documentclass{amsart}
\usepackage{all2021,xypic,color,hyperref}
\newcommand{\FI}{\mathbf{FI}}

\newcommand{\PF}{\mathbf{PF}}
\newcommand{\FIop}{\mathbf{FI^{op}}}
\newcommand{\cC}{\mathcal{C}}
\newcommand{\cF}{\mathcal{F}}
\newcommand{\cE}{\mathcal{E}}

\newcommand{\cN}{\mathcal{N}}
\newcommand{\cM}{\mathcal{M}}
\newcommand{\Mat}{\mathrm{Mat}}
\newcommand{\Sh}{\operatorname{Sh}}
\renewcommand{\phi}{\varphi}
\newcommand{\Gal}{\mathrm{Gal}}
\newcommand{\red}{\mathrm{red}}

\newcommand{\comment}[1]{}

\begin{document}

\title{Components of symmetric wide-matrix varieties}
\author{Jan Draisma, Rob Eggermont, and Azhar Farooq}
\thanks{JD was partially supported by Vici grant
639.033.514 from the Netherlands Organisation for Scientific
Research (NWO) and Project Grant 200021\_191981 from the
Swiss National Science Foundation (SNF).
AF was supported by Vici grant 639.033.514.
RE was supported by Veni grant 016.Veni.192.113 from NWO}

\maketitle

\begin{abstract}
We show that if $X_n$ is a variety of $c \times n$-matrices that
is stable under the group $\Sym([n])$ of column permutations and
if forgetting the last column maps $X_n$ into $X_{n-1}$, then the
number of $\Sym([n])$-orbits on irreducible components of $X_n$ is
a quasipolynomial in $n$ for all sufficiently large $n$. To this end,
we introduce the category of affine $\FIop$-schemes of width one, review
existing literature on such schemes, and establish several new structural
results about them. In particular, we show that under a shift and a localisation,
any width-one $\FIop$-scheme becomes of product form, where $X_n=Y^n$
for some scheme $Y$ in affine $c$-space.  Furthermore, to any $\FIop$-scheme
of width one we associate a {\em component functor} from the category
$\FI$ of finite sets with injections to the category $\PF$ of finite
sets with partially defined maps. We present a combinatorial model for
these functors and use this model to prove that $\Sym([n])$-orbits of
components of $X_n$, for all $n$, correspond bijectively to orbits of a
groupoid acting on the integral points in certain rational polyhedral
cones. Using the orbit-counting lemma for groupoids and theorems on
quasipolynomiality of lattice point counts, this yields our Main
Theorem. We present applications of our methods to counting
fixed-rank matrices with entries in a prescribed set and to counting
linear codes over finite fields up to isomorphism.
\end{abstract}

\section{The main result and background}

\subsection{Main result} \label{ssec:Main}

For a nonnegative integer $n$ we define $[n]:=\{1,\ldots,n\}$.

Let $K$ be a Noetherian ring (commutative with $1$), let $c \in \ZZ_{\geq
0}$, and, for all $n \in \ZZ_{\geq 0}$, let $I_n$ be an ideal in the
polynomial ring $A_n:=K[x_{i,j} \mid i \in [c], j \in [n]]$ such that
the following two conditions are satisfied:
\begin{enumerate}
\item $I_n$ is preserved by the (left) action of the symmetric group
$\Sym([n])$ on $A_n$
via $K$-algebra automorphisms determined by $\pi
x_{i,j}=x_{i,
\pi(j)}$; and
\item $I_n \subseteq I_{n+1}$.
\end{enumerate}

Dually, let $X_n$ be the prime spectrum of $A_n/I_n$, a closed subscheme
of $\Spec(A_n)$. Then the two conditions above express that
\begin{enumerate}
\item $X_n$ is preserved by the induced action of $\Sym([n])$ on $\Spec(A_n)$;
and
\item the projection $\Spec(A_{n+1}) \to \Spec(A_n)$ dual to the
inclusion $A_n \to A_{n+1}$ maps $X_{n+1}$ into $X_n$.
\end{enumerate}
Such a sequence $(X_n)_n$ of schemes of matrices is called
a {\em width-one $\FIop$-scheme} of finite type over $K$ (see
Section~\ref{sec:FISchemes} for a more convenient, functorial definition)
or, more informally, a {\em symmetric wide-matrix scheme}, where the adjective
{\em wide} refers to the fact that $c$ is constant and we are interested
in the case where $n \gg 0$; for brevity, we will usually drop
the adjective {\em symmetric}.

Recall that a {\em quasipolynomial} is a function $f:\ZZ \to \RR$ of
the form
\[ f(n)=c_d(n) n^{d} + c_{d-1}(n) n^{d-1} + \cdots + c_0(n) \]
where each $c_i:\ZZ \to \RR$ is periodic with integral period.
Equivalently, $f$ is a quasipolynomial if and only if there exist an $N$ and
polynomials $f_0,\ldots,f_{N-1}$ such that $f(n)=f_i(n)$
whenever $n \equiv i$ modulo $N$.

\begin{thm}[Main Theorem] \label{thm:Main}
Let $(X_n)_n$ be a width-one $\FIop$-scheme of finite type over a
Noetherian ring $K$. Then the action of $\Sym([n])$ on $X_n$ induces an
action of $\Sym([n])$ on the set $\cC(X_n)$ of irreducible components of
$X_n$, and there exists a quasipolynomial $f:\ZZ \to \RR$ and a natural number $n_0
\in \ZZ_{\geq 0}$ such that the number $|\cC(X_n)/\Sym([n])|$ of
$\Sym([n])$-orbits
on $\cC(X_n)$ equals $f(n)$ for all $n \geq n_0$.
\end{thm}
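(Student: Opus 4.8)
The plan is to follow the chain of reductions announced in the abstract, converting the geometric orbit count into a count of lattice points in rational polyhedra and then invoking an Ehrhart-type quasipolynomiality theorem. Two facts underpin everything: width-one $\FIop$-schemes are Noetherian up to the symmetric-group action, so that only finitely much combinatorial data is ever relevant; and, after a shift $n \mapsto n+k$ and a localisation along a column-symmetric closed subset, such a scheme becomes of product form $X_n = Y^n$ with $Y$ a closed subscheme of affine $c$-space.

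\emph{Step 1: reduction to the product case.} I would set up a Noetherian induction on a suitable complexity invariant of $(X_n)_n$ (for instance the dimension of the scheme cut out in one column by the generic behaviour). Passing to product form removes a column-symmetric closed subset $Z_n \subseteq X_n$; stratifying $X_n$ by how many columns lie in the removed locus and in which ``block'' of the shift they sit, one writes $X_n$, up to nilpotents and reindexing, as a finite disjoint union of locally closed pieces, each of which---after grouping columns of the same type and forgetting the finitely many symmetric coordinates introduced by the shift---is governed by a width-one $\FIop$-scheme of strictly smaller complexity, to which the inductive hypothesis applies. On the open product stratum $X_n = Y^n$ the irreducible components are the products $W_1 \times \cdots \times W_n$ with each $W_i$ a component of $Y$, so their $\Sym([n])$-orbits are exactly the size-$n$ multisets of components of $Y$ and are counted by a binomial coefficient in $n$. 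Summing the (quasi)polynomial contributions of the finitely many strata---where one must be careful to obtain an exact equality, not merely an asymptotic one, and to track how the shift translates the variable---yields a quasipolynomial, since finite sums and products of quasipolynomials are quasipolynomials.

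\emph{Step 2: the component functor and its combinatorial model.} To $(X_n)_n$ I would attach the component functor $\cF\colon \FI \to \PF$ sending a finite set $S$ to the set $\cC(X_S)$ of irreducible components and an injection $S \hookrightarrow T$ to the partially defined map carrying a component $Z$ of $X_S$ to the component of $X_T$ dominating it, when this is well defined. Noetherianity up to symmetry forces $\cF$ to be finitely presented: there is an $N$ such that every component of every $X_n$ is the image under some injection $[N] \hookrightarrow [n]$ of a component of $X_N$, and every coincidence among such images is already visible at level $N$. The combinatorial model then records a component of $X_n$ by a choice of one of finitely many ``building blocks'' (components at level $N$ up to symmetry) together with multiplicities specifying how many pairwise-disjoint copies of the relevant column sets each block occupies; the disjointness-and-exhaustion constraints cut out a rational polyhedral cone $P$, and components of $X_n$ up to symmetry correspond bijectively to orbits---under a groupoid $\mathcal{G}$ built from relabellings of copies and the internal symmetries of the blocks---of the integer points in the weight-$n$ slice $P_n$ of $P$.

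\emph{Step 3: orbit counting and conclusion.} By the orbit-counting lemma for groupoids, $|\cC(X_n)/\Sym([n])| = |P_n/\mathcal{G}|$ is a finite nonnegative-rational-weighted sum, over the finitely many isomorphism-and-conjugacy data of $\mathcal{G}$, of the lattice-point counts $|\mathrm{Fix}_{P_n}(g)|$, and each fixed-point set $\mathrm{Fix}_{P_n}(g)$ is the set of integer points of weight $n$ in the cone $P$ intersected with the fixed subspace of $g$, again a rational polyhedral cone. The number of lattice points in the weight-$n$ slice of a fixed rational polyhedral cone is a quasipolynomial in $n$ for all sufficiently large $n$ (Ehrhart's theorem for rational polytopes, equivalently quasipolynomiality of vector partition functions along a ray), so every summand is eventually quasipolynomial; a finite rational-linear combination of quasipolynomials is again a quasipolynomial, and taking $n_0$ to be the largest of the finitely many thresholds---then undoing the shift of Step 1, which only translates the argument---produces the desired $f$ and $n_0$. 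The main obstacle I anticipate is Step 1: arranging the stratification so that the orbit counts of $X_n$ split \emph{exactly} as a sum over strata compatible with the product-form reduction and the shift. Granting that, Step 2's finiteness statement together with the concluding Step 3 is a comparatively routine assembly of Noetherianity up to symmetry with standard quasipolynomiality results.
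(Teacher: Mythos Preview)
Your broad architecture matches the paper's---Noetherian induction, shift-and-localise to product form, groupoid orbit counting on lattice points in rational cones---but there are two genuine gaps.

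First, your analysis of the product case in Step~1 is wrong. When $X_n = Y^n$ with the product taken over a domain $L$, the irreducible components of $Y^n$ are \emph{not} in general the products $W_1 \times \cdots \times W_n$ of components of $Y$. One must pass to the separable closure $\overline{M}$ of $M = \Frac(L)$: there $Y_{\overline{M}}$ has components $C_1,\ldots,C_k$, the scheme $Y^n_{\overline{M}}$ has components $\prod_i C_{\alpha(i)}$ for $\alpha \in [k]^n$, and the components of $Y^n$ over $L$ are the \emph{Galois orbits} of these, i.e.\ elements of $[k]^n/G$ where $G$ is the image of $\Gal(\overline{M}/M)$ in $\Sym([k])$. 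The $\Sym([n])$-orbit count is therefore $|([k]^n/G)/\Sym([n])|$, a genuine quasipolynomial rather than a binomial coefficient; see Examples~\ref{ex:Galois} and~\ref{ex:cube}. This is not cosmetic: it is why quasipolynomiality (not polynomiality) appears already in the base case, and the group $G$ is the seed from which the later groupoid grows.

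Second, and more seriously, the difficulty you flag at the end of Step~1 is the crux of the whole argument, and your sketch does not address it. After shifting by $S_0$ and inverting $h \in B(S_0)$, a component of the open stratum of $X(S)$ is recorded by a pair $(\sigma,\alpha)$ with $\sigma\colon S_0 \to S$ an injection on whose image $h$ does not vanish and $\alpha \in [k]^{S\setminus\im(\sigma)}$ a Galois label. The same component admits many such $\sigma$, so one must control the equivalence relation $(\sigma,\alpha)\sim_S(\sigma',\alpha')$ meaning ``same component''. The paper isolates three axioms this relation satisfies (Axioms (1)--(3) for model functors in \S\ref{ssec:ModelFunctors}); Axiom~(3), asserting that $\alpha$ and $\alpha'$ have the same equality pattern outside $\im(\sigma)\cup\im(\sigma')$, needs a real argument comparing base changes to two separable closures (Lemma~\ref{lm:Pattern}). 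The quasipolynomial count for such model functors (Theorem~\ref{thm:ModelFunctor}) then proceeds by an induction in which the groupoid is not given a priori but \emph{emerges} from $\sim_S$: its objects are quadruples encoding the infrequent part of $\alpha$, and its arrows are bijections between ``frequent sets'' induced by equivalent pairs. Your Step~2 description (multiplicities, disjoint column sets, a groupoid from relabellings) does not capture this structure; also, the component functor is contravariant---an injection $S\hookrightarrow T$ gives a partially defined map $\cC(X_T)\to\cC(X_S)$, not the direction you wrote.
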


\subsection{Examples}

We illustrate the Main Theorem by a number of examples. We recall that
the irreducible components of $X_n$ are in one-to-one correspondence
with the inclusion-wise minimal prime ideals $A_n$ that contain $I_n$.

\begin{ex} \label{ex:Planes}
Let $K$ be a domain, take $c=1$, write $x_j$ instead of
$x_{1,j}$,
and let $I_n$ be the ideal generated by all monomials $x_i x_j x_k$
with $i,j,k \in [n]$ distinct. Clearly, the sequence $(I_n)_n$
satisfies the conditions (1) and (2) above.
A prime ideal containing $I_n$ contains at
least one variable from each triple of distinct variables.  Hence the
minimal prime ideals containing $I_n$ are the ideals $I_S:=(\{x_i \mid
i \in S\})$ where $S \subseteq [n]$ is a set of cardinality $n-2$;
the corresponding subscheme is the coordinate plane corresponding to
the coordinates not labelled by  $S$.  Hence $X_n$ has
$\binom{n}{n-2}=\binom{n}{2}$
irreducible components, which form a single orbit under the symmetric
group $\Sym([n])$. The quasipolynomial from the Main Theorem
is $1$. 
\end{ex}

\begin{ex} \label{ex:Points}
Set $K:=\CC$, let $d \in \ZZ_{\geq 0}$, take $c=1$, and let $I_n$ be
the ideal generated by all polynomials $x_i^d-1$ with $i \in [n]$. The
irreducible components of $X_n$ are the points $(\zeta_1,\ldots,\zeta_n)$
where each $\zeta_i$ is an $d$-th root of unity. Thus $X_n$ has $d^n$
irreducible components, and these form $\binom{n+d-1}{d-1}$ orbits under
the group $\Sym([n])$, each of which has a unique representative of the form
\[ (1,\ldots,1,e^{2\pi i/d},\ldots,e^{2\pi i/d},e^{2\cdot
2\pi i/d},\ldots,e^{2 \cdot 2\pi i/d},\ldots,e^{(d-1)\cdot 2
\pi i/d},\ldots,e^{(d-1) \cdot 2 \pi i/d}), \]
where the numbers of occurrences of $e^{j\cdot 2\pi i/d},\
j=0,\ldots,d-1$ are
arbitrary nonnegative integers whose sum is $n$. 
\end{ex}

\begin{ex} \label{ex:Galois}
Set $K:=\CC(t)$, where $t$ is a variable, let $c=1$, and let $I_n$
be the ideal generated by all polynomials of the form $x_i^2-t$ with
$i \in [n]$. Then $I_1$ is a prime ideal, but for $n \geq 2$ and two
distinct $i,j \in [n]$ any prime ideal $P$ containing $I_n$ also contains
$(x_i^2-t)-(x_j^2-t)=(x_i-x_j)(x_i+x_j)$ and hence either $x_i-x_j$
or $x_i+x_j$. Hence, modulo $P$, the variables $x_1,\ldots,x_n$ can be
partitioned into two subsets: within each of these sets, all variables
are equal modulo $P$, and they are minus the variables in the other set,
again modulo $P$.

Conversely, if $A,B \subseteq [n]$ are disjoint sets with $A \cup B=[n]$,
then the ideal $P_{A,B}$ generated by the polynomials $x_i-x_j$ with
$(i,j) \in (A \times A) \cup (B \times B)$, the polynomials $x_i+x_j$
with $(i,j) \in A \times B$, and the polynomial $x_1^2-t$ is a minimal
prime ideal over $I_n$. By the above, these are all minimal primes over
$I_n$. Note that $P_{A,B}=P_{C,D}$ if and only if $\{A,B\}=\{C,D\}$,
so there is a bijection between unordered partitions of $[n]$ into two
parts (one of which may be empty).

The number of unordered partitions of $[n]$ is $2^{n-1}$. The action
of $\Sym([n])$ on minimal primes over $I_n$ corresponds to the natural
action of $\Sym([n])$ on unordered partitions of $[n]$.
The
number of $\Sym([n])$-orbits on the latter is $\lfloor n/2\rfloor +1$---indeed,
$\{A,B\},\{\tilde{A},\tilde{B}\}$ are in the same $\Sym([n])$-orbit if and only
if $\min\{|A|,|B|\}=\min\{|\tilde{A}|,|\tilde{B}|\}$, and this number
takes any of the values in $\{0,\ldots,\lfloor n/2 \rfloor\}$. Here the
quasipolynomial is the function $f(n)=\lfloor n/2 \rfloor + 1$, and it holds
for all $n \geq 0$. 
\end{ex}

\begin{ex} \label{ex:cube}
Set $K:=\CC$, $c:=1$, let $d \in \ZZ_{\geq 1}$, and let $I_n$ be the
ideal generated by all differences $x_i^d-x_j^d$ with $i,j \in [n]$. In
this case, each prime ideal $P$ containing $I_n$ also contains, for each
$i \neq j$, a polynomial of the form $x_i - \zeta x_j$, where $\zeta$
is a $d$-th root of unity. Hence the variables can be partitioned into
$d$ sets, where modulo $P$ the variables in each set are $e^{2\pi i/d}$
times the variables in the previous set.

Conversely, let $A=(A_0,\ldots,A_{d-1})$ be an ordered partition of
$[n]$: a sequence of disjoint, potentially empty
subsets of $[n]$ whose union is $[n]$. Then the ideal $P_A$ generated
by the polynomials $x_j-e^{2 b \pi i/d} x_l$ whenever $l$ lies in some $A_a$ and $j$
lies in $A_{a+b}$ (indices modulo $d$) is a minimal prime over $I_n$,
and by the above all primes arise in this manner. Furthermore, $P_A=P_B$
if and only if the sequence $B$ arises from $A$ by a cyclic permutation,
i.e., by adding an element of $\ZZ/d\ZZ$ to the indices.

Hence the irreducible components of the scheme defined by $I_n$ correspond
bijectively to orbits of ordered partitions of $[n]$ into $d$ parts
under the action of $\ZZ/d\ZZ$ by rotation of the parts. The number
of $\Sym([n])$-orbits on such components is therefore equal to the number of
$(\ZZ/d\ZZ)\times \Sym([n])$-orbits on ordered partitions.
Modding out $\Sym([n])$ first,
what remains is to count the $\ZZ/d\ZZ$-orbits on ordered {\em
integer} partitions of $n$ into $d$ nonnegative parts. This is done using
the orbit-counting lemma (due to Cauchy, Frobenius, and not
Burnside) for $\ZZ/d\ZZ$: for
$e \in \ZZ/d\ZZ$ define $f(e):=\gcd(d,e) \in \{1,\ldots,d\}$. Then rotation by
$e$ on such integer partitions has the same fixed points as
rotation by $f(e)$. This number is $0$ if
$n$ is not divisible
by $d/f(e)$, and equal to $\binom{n/(d/f(e)) + f(e) -
1}{f(e)-1}$ otherwise: the first $f(e)$ positions in the
partition can be
filled arbitrarily with nonnegative integers whose sum is
$n/(d/(f(e)))$, and this determines the partition fixed under
rotating over $f(e)$. Thus the number of $\Sym([n])$-orbits on
components equals
\[ \frac{1}{d} \sum_{e \in \ZZ/d\ZZ: (d/f(e))|n}
\binom{n/(d/f(e)) + f(e) - 1}{f(e)-1}, \]
which is, indeed, a quasipolynomial in $n$.  
\end{ex}

These examples illustrate different aspects of the proof of
Theorem~\ref{thm:Main}. First, the projection $X_{n+1} \to X_n$ maps each
irreducible component of $X_{n+1}$ into some component of $X_n$, but not
necessarily {\em onto} some such component: in Example~\ref{ex:Planes},
the coordinate planes involving the variable $x_{n+1}$ are mapped onto
coordinate {\em lines} rather than planes. However, ``most'' coordinate
planes are mapped onto coordinate planes. We will capture these relations
between components of the $X_n$ as $n$ varies by the so-called {\em
component functor} (see Section~\ref{sec:ComponentFunctor}), which is a
contravariant functor from $\FI$ to the category $\PF$ of finite sets
with partially defined maps. This functor plays a fundamental role in
the proof of Theorem~\ref{thm:Main}, and also yields a more detailed
picture of the components of the $X_n$ for varying $n$.

Second, Example~\ref{ex:Points} illustrates that, while the number of
components of $X_n$ can grow exponentially with $n$, the number of orbits
is upper-bounded by a polynomial.

Third, in Example~\ref{ex:Galois} we see that, if we adjoin $\sqrt{t}$
to the ground field $K$, then the example reduces to a variation on
Example~\ref{ex:Points}: there are $2^n$ components and $n+1$ orbits
on components. This suggests that the quasipolynomiality in the Main
Theorem is due to the action of a Galois group. We will see that this is,
indeed, the case when the wide-matrix scheme is of {\em product
type}; see \S\ref{ssec:FiniteOverWide}.

Finally, Example~\ref{ex:cube} shows that even when $K$ is an
algebraically closed field, quasipolynomiality (rather than
polynomiality) occurs. In part,
this is because we will have to work over larger base fields that are
transcendental extensions of the ground field $K$; and in part, it is
because Galois groups are not the sole reason for quasipolynomiality:
in \S\ref{ssec:SecondQuasi}, we will replace the orbit-counting via
Galois groups to orbit-counting via certain groupoids.

We conclude this subsection with two interesting applications of our
techniques.

\begin{cor}
Let $K$ be a field, let $S \subseteq K$ be a finite subset, and let $k$
be a natural number. For every $n \in \ZZ_{\geq 0}$, define
\[ M_n:=\{A \in S^{n \times n} \subseteq K^{n \times n} \mid
\rk(A) = k\}, \]
the set of all rank-$k$ matrices all of whose entries are in $S$.
Let $\Sym([n])$ act by simultaneous row and column permutations on
$M_n$. Then $|M_n/\Sym([n])|$ is a quasipolynomial in $n$ for $n \gg
0$.
\end{cor}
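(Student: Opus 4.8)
The plan is to deduce the corollary from Theorem~\ref{thm:Main} through a factorisation construction. The point is that a rank-$k$ matrix $A\in K^{n\times n}$ is the same datum as a factorisation $A=U^\top V$ with $U,V$ of size $k\times n$, taken up to the free transitive $\mathrm{GL}_k$-action $(U,V)\mapsto(gU,g^{-\top}V)$; such a pair $(U,V)$ is nothing but a $2k\times n$ matrix, and simultaneous row-and-column permutations of $A$ correspond exactly to column permutations of the pair. This suggests encoding $M_n$, together with its lower-rank analogues, inside width-one $\FIop$-schemes of width $c=2k$.

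Concretely, I would set $A_n:=K[x_{i,j}\mid i\in[2k],\ j\in[n]]$, let $U$ and $V$ denote the submatrices of $(x_{i,j})$ on rows $1,\dots,k$ and on rows $k+1,\dots,2k$, let $I_n\subseteq A_n$ be generated by the polynomials $\prod_{s\in S}\bigl((U^\top V)_{a,b}-s\bigr)$ for $a,b\in[n]$, and let $J_n\supseteq I_n$ be generated in addition by all $k\times k$ minors of $U^\top V$. The column-permutation action $\pi\cdot x_{i,j}=x_{i,\pi(j)}$ of $\Sym([n])$ permutes the generators of $I_n$ (it sends the $(a,b)$-th to the $(\pi a,\pi b)$-th) and the $k\times k$ minors of $U^\top V$ among themselves, so $I_n$ and $J_n$ are $\Sym([n])$-stable; and visibly $I_n\subseteq I_{n+1}$ and $J_n\subseteq J_{n+1}$. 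Hence $X_n:=\Spec(A_n/I_n)$ and $Z_n:=\Spec(A_n/J_n)\subseteq X_n$ are width-one $\FIop$-schemes of finite type over $K$, so by Theorem~\ref{thm:Main} there are quasipolynomials $P$ and $R$ with $|\cC(X_n)/\Sym([n])|=P(n)$ and $|\cC(Z_n)/\Sym([n])|=R(n)$ for all $n\gg 0$.

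The key step is then to identify $\cC(X_n)$ as a $\Sym([n])$-set. The assignment $(U,V)\mapsto U^\top V$ is a morphism $\mu_n$ from $X_n$ to $\cN_n:=\Spec\bigl(K[z_{a,b}\mid a,b\in[n]]/(\prod_{s\in S}(z_{a,b}-s)\mid a,b\in[n])\bigr)$, and since the elements of $S$ are pairwise distinct in the field $K$ the Chinese Remainder Theorem shows $\cN_n$ to be a disjoint union of reduced $K$-points, one for every matrix in $S^{n\times n}$. Consequently $X_n$ is the disjoint union of the open-and-closed subschemes $\mu_n^{-1}(A)$ for $A\in S^{n\times n}$, these being empty unless $\rk(A)\leq k$; by construction $Z_n$ is the union of those with $\rk(A)\leq k-1$, so its complement $\bigsqcup_{A\in M_n}\mu_n^{-1}(A)$ is again open and closed. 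For $A\in M_n$ the fibre $\mu_n^{-1}(A)=\{(U,V)\mid U^\top V=A\}$ contains a $K$-point (any full-rank factorisation) and is a single $\mathrm{GL}_k$-orbit, hence irreducible, and so contributes exactly one component. Because $\mu_n$ intertwines the column-permutation action on $X_n$ with the simultaneous row-and-column permutation action on $\cN_n$, this yields an isomorphism of $\Sym([n])$-sets $\cC(X_n)\cong\cC(Z_n)\sqcup M_n$. Passing to orbit sets gives $P(n)=R(n)+|M_n/\Sym([n])|$, so $|M_n/\Sym([n])|=P(n)-R(n)$ is a quasipolynomial for $n\gg 0$, as desired.

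The step I expect to require the most care is the claim that the rank-exactly-$k$ locus $X_n\setminus Z_n$ is open \emph{and closed} in $X_n$: this is precisely where the finiteness of $S$ enters, since it makes $\mu_n$ a morphism to a \emph{discrete} scheme, so that $(U,V)\mapsto U^\top V$ is locally constant and each fibre $\mu_n^{-1}(A)$ is a union of connected components of $X_n$. Were $S$ infinite, $X_n\setminus Z_n$ would be only open, its closure would absorb lower-rank components, and its components would no longer biject with $M_n$; one would then be forced to analyse the reducible fibres of $\mu_n$ over matrices of rank below $k$, an analysis this approach sidesteps. The remaining points — that $I_n$ and $J_n$ are $\FIop$-ideals, and that $\{(U,V)\mid U^\top V=A\}$ is a $\mathrm{GL}_k$-orbit (indeed isomorphic to $\mathrm{GL}_k$) when $\rk(A)=k$ — are routine.
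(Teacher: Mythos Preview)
Your proof is correct and follows the same factorisation idea as the paper: encode a rank-$k$ matrix $C$ by pairs $(U,V)$ with $U^\top V=C$, so that simultaneous row-and-column permutations become column permutations of a $2k\times n$ matrix. The paper applies the Main Theorem directly to the single sequence $\phi_n^{-1}(M_n)$, but taken literally that sequence is \emph{not} a width-one $\FIop$-scheme, because deleting a column of $(U,V)$ can drop the rank of $U^\top V$ below $k$: for instance with $k=1$ and $S=\{0,1\}$, the pair $((0,1),(0,1))$ lies over a matrix in $M_2$, yet its truncation $((0),(0))$ lies over the zero matrix, which is not in $M_1$. Your two-scheme subtraction repairs this cleanly: both $X_n$ (entries of $U^\top V$ in $S$) and $Z_n$ (entries in $S$ and rank $<k$) \emph{are} genuine wide-matrix schemes, the finiteness of $S$ makes $\mu_n$ have discrete target so that $Z_n$ is clopen in $X_n$, and the components of the clopen complement biject $\Sym([n])$-equivariantly with $M_n$. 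The remaining ingredients you invoke (CRT for the reducedness of $\cN_n$, irreducibility of the rank-$k$ fibres as $\GL_k$-torsors, equivariance of $\mu_n$) are all in order.
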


\begin{proof}
Consider the morphism $\phi_n: \AA_K^{k \times n} \times \AA_K^{k
\times n} \to \AA_K^{n \times n}$ given by $(A,B) \mapsto A^T
\cdot B$. For each $C \in M_n$, the closed subscheme
$\phi_n^{-1}(C)$ is irreducible---indeed, for any field extension $L$ of $K$,
its $L$-points
form an orbit under the action of the group $\GL_k(L)$
acting via $g(A,B)=((g^{-1})^TA,gB)$, and irreducibility
follows from irreducibility of the group scheme $\GL_k$.
Hence $(X_n)_n$ defined by
$X_n:=\phi_n^{-1}(M_n)$ is a wide-matrix scheme with $c=2k$
whose irreducible components are in $\Sym(n)$-equivariant one-to-one
correspondence with the points of $M_n$.
The corollary follows from the Main Theorem to $(X_n)_n$.
\end{proof}

\begin{re}
The same argument works for {\em symmetric} rank-$k$ matrices and for
{\em skew-symmetric} rank-$k$ matrices. More generally, by a similar
argument: if $Y_n \subseteq \AA_K^{n \times n}$ is a closed subscheme of
the variety of rank-$\leq k$ matrices such that $Y_n$ is preserved under
the action of $\Sym([n])$ by conjugation and such that forgetting the last
row and column maps $Y_{n+1}$ to $Y_n$, then, too, the number of orbits
of $\Sym([n])$ on irreducible components of $Y_n$ is a quasipolynomial
in $n$ for $n \gg 0$. 
\end{re}

\begin{ex}
Consider $K=\QQ$ and let $M_{k,n}$ be the set of {\em symmetric} $n \times
n$-matrices with entries in $\{0,1\}$ of rank precisely $k$.
Then:
\begin{itemize}
\item $M_{0,n}$ consists of the zero matrix only, so there
is a single $S_n$-orbit.
\item Each $S_n$-orbit in $M_{1,n}$ has a unique representative of the form
\[ \begin{bmatrix} J & 0 \\ 0 & 0 \end{bmatrix} \]
where $J$ is an $m \times m$-matrix ($1 \leq m \leq n$) with all ones, and the
zeros are block matrices of appropriate sizes. Hence there
are $n$ orbits.
\item There are three types of $S_n$-orbits in $M_{2,n}$,
with representatives
\[ \begin{bmatrix} J_1 & 0 & 0 \\ 0 & J_2 & 0 \\ 0 & 0 & 0
\end{bmatrix},
\quad
\begin{bmatrix} 0 & J & 0 \\ J^T & 0 & 0 \\ 0 & 0 & 0 \end{bmatrix},
\quad
\begin{bmatrix} J_1 & J_2 & 0 \\ J_2^T & 0 & 0 \\ 0 & 0 & 0
\end{bmatrix}\]
where, in the first case, $J_1,J_2$ are all-one matrices of formats
$n_1 \times n_1,n_2 \times n_2$ with $1 \leq n_1 \leq n_2$ and $n_1+n_2
\leq n$; in the second case, $J$ is an all-one $n_1 \times n_2$-matrix with $1
\leq n_1 \leq n_2$ and $n_1+n_2 \leq n$; and in the third case, $J_1$
is an all-one $n_1 \times n_1$-matrix and $J_2$ is an
all-one $n_1 \times n_2$-matrix
with $1 \leq n_1,n_2$ and $n_1 + n_2 \leq n$.

In the last case, the number of pairs $(n_1,n_2)$ is $\binom{n}{2}$.
In the first and second cases, we have to count pairs $(n_1,n_2)$ with
$1 \leq n_1 \leq n_2$ and $n_1+n_2 \leq n$. This number equals
\[ \sum_{n_1=1}^{\lfloor n/2 \rfloor} (n-2n_1+1)=\lfloor n/2 \rfloor
\cdot \lceil n/2 \rceil. \]
Summarising, the number of $S_n$-orbits on $M_{2,n}$ equals
\[ 2 \cdot \lfloor n/2 \rfloor \cdot \lceil n/2 \rceil
+ \binom{n}{2}, \]
clearly a quasipolynomial in $n$. 
\end{itemize}
\end{ex}

For the next application, recall that a {\em linear code} of length
$n$ and dimension $m$ over $\FF_q$ is a linear subspace of $\FF_q^n$
of dimension $m$. {\em Puncturing} such a code means deleting a
coordinate; we only allow this when the dimension
of the code does not drop. There is a natural notion of isomorphism of
linear codes, involving permuting and scaling coordinates as well as
applying an automorphism of $\FF_q$; see Example~\ref{ex:Codes}.

\begin{thm} \label{thm:Codes}
Fix a finite field $\FF_q$ and a natural number $m$. Let $\cC$ be a family
of isomorphism classes of $m$-dimensional codes, of varying lengths,
that is preserved under puncturing. Then for $n \gg 0$ the number of
length-$n$ elements in $\cC$ is a quasipolynomial in $n$.
\end{thm}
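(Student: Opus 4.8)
The plan is to reduce Theorem~\ref{thm:Codes} to Theorem~\ref{thm:Main}. A code of length $n$ and dimension $m$ over $\FF_q$ is the same thing as a $\GL_m(\FF_q)$-orbit of rank-$m$ generator matrices $G\in\FF_q^{m\times n}$, with $\GL_m$ acting by left multiplication; puncturing at the last coordinate corresponds to deleting the last column of $G$ whenever the rank is preserved, and otherwise produces a matrix of rank $m-1$. Two codes are isomorphic exactly when one is carried to the other by an element of the group $\Gamma_n:=(\FF_q^\times)^n\rtimes\bigl(\Sym([n])\times\Gal(\FF_q/\FF_p)\bigr)$ of symmetries of $\FF_q^n$, acting by coordinate rescalings, coordinate permutations, and entrywise field automorphisms. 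Thus the number of length-$n$ elements of $\cC$ equals $|\cC_n/\Gamma_n|$, where $\cC_n$ is the finite set of $m$-dimensional subspaces of $\FF_q^n$ whose isomorphism class lies in $\cC$; since the monomial group $M_n:=(\FF_q^\times)^n\rtimes\Sym([n])$ is normal in $\Gamma_n$ with quotient $\Gal(\FF_q/\FF_p)$, the task is to realize $\cC_n/(\FF_q^\times)^n$ --- codes up to coordinate rescaling --- as the set of components of a $\Sym([n])$-scheme over $\FF_q$, and then let the Galois group absorb the remaining quotient.

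Inside $\AA_{\FF_q}^{m\times n}$, I would let $\widehat X_n$ be the union, over $C\in\cC_n$, of the Zariski closures $\overline{\GL_m\cdot G_0^C}$ of the orbits of fixed $\FF_q$-rational generator matrices $G_0^C$, together with the determinantal locus of matrices of rank $\le m-1$. Each orbit closure is geometrically irreducible because the group scheme $\GL_m$ is --- the argument used in the proof of the corollary on fixed-rank matrices above --- distinct codes give distinct closures, and the rank-$\le m-1$ locus is irreducible; so the components of $\widehat X_n$ are indexed by $\cC_n$ together with at most one ``junk'' component. The group $(\FF_q^\times)^n$ of column scalings acts on $\widehat X_n$, permuting the orbit closures as it permutes $\cC_n$ and fixing the junk locus; since $|(\FF_q^\times)^n|$ is prime to $p$, the quotient $\widetilde X_n:=\widehat X_n/(\FF_q^\times)^n$ is reduced, with components indexed by $\cC_n/(\FF_q^\times)^n$ plus the junk. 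I would then realize $\widetilde X_n$ as a closed subscheme of $\AA_{\FF_q}^{c\times n}$ via the finite morphism $\AA_{\FF_q}^{m\times n}\to\AA_{\FF_q}^{c\times n}$ that applies a fixed generating set of the invariant ring $\FF_q[y_1,\ldots,y_m]^{\FF_q^\times}$ to each column block (here $c$ depends only on $m$ and $q$). This construction is $\Sym([n])$-equivariant, and deleting the last column sends $\widetilde X_{n+1}$ into $\widetilde X_n$: a punctured code of full rank lies in $\cC_{n-1}$ because $\cC$ is puncturing-closed, a rank drop lands in the junk, and junk stays junk.

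Choosing the generating set of $\FF_q[y_1,\ldots,y_m]^{\FF_q^\times}$ to be $\Gal(\FF_q/\FF_p)$-stable, the entire construction is Galois-equivariant and descends to a width-one $\FIop$-scheme $X_n$ over the prime field $\FF_p$. By Galois descent, the components of $X_n$ over $\FF_p$ are the $\Gal(\FF_q/\FF_p)$-orbits of its geometric components, hence are indexed $\Sym([n])$-equivariantly by $\cC_n/\Gamma_n$ together with the (at most one) junk component; this is precisely the merging of irreducible components seen in Example~\ref{ex:Galois}, with the extension $\FF_p\subseteq\FF_q$ playing the role there played by $\CC(t)\subseteq\CC(\sqrt t)$. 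Since all schemes in sight are reduced, the set-theoretic inclusion established above upgrades to $I_n\subseteq I_{n+1}$, so Theorem~\ref{thm:Main} applies to $(X_n)_n$: the quantity $|\cC(X_n)/\Sym([n])| = |\cC_n/\Gamma_n|+(\text{the number of junk components})$ is a quasipolynomial in $n$ for $n\gg 0$, and subtracting off the constant count of junk components yields Theorem~\ref{thm:Codes}.

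The chief difficulty is that the column-scaling quotient is ``coordinate-local'' only over $\FF_q$, whereas the count we want is intrinsically over the prime field --- this is what forces the detour through $\FF_q$ followed by descent. The points needing genuine care are: (a) making the invariant-theoretic column quotient $\Gal(\FF_q/\FF_p)$-equivariant; (b) checking that the resulting Galois action on the set of geometric components of $X_n$ is the natural action on codes; and (c) handling the rank-deficient junk stratum --- it is irreducible, stable under puncturing, column scaling and Galois, and for $m\ge 2$ its dimension $(m-1)(n+1)$ eventually exceeds the dimension $m^2$ of the orbit closures, so for $n\gg 0$ it contributes exactly one component (and none when $m\le 1$, where it lies inside the orbit closures). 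With the translation of the first paragraph in hand, the remaining verifications are routine.
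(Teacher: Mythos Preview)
Your approach is correct but takes a substantially different route from the paper's. The paper does \emph{not} reduce Theorem~\ref{thm:Codes} to the Main Theorem; indeed, right after stating the theorem it is remarked that it ``is not a direct consequence of our Main Theorem, but rather of one of the tools that we develop for the Main Theorem.'' Instead, the paper proceeds purely combinatorially: in Example~\ref{ex:Codes} it is shown directly that the functor $S \mapsto \cM(S)$ of codes-up-to-monomial-and-Galois-equivalence is isomorphic to a \emph{model functor} in the sense of Definition~\ref{de:Model}, with $S_0 = [m]$, $k = 1+(q^m-1)/(q-1)$, and $[k]$ identified with $\FF_q^m/\FF_q^*$; the pair $(\sigma,\alpha)$ records an information set $\im(\sigma)$ of the code together with the remaining columns of the reduced generator matrix up to scaling, and Axioms (1)--(3) are verified by hand. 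The result then follows in one line from Theorem~\ref{thm:ModelFunctor}, the quasipolynomiality theorem for model functors.

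Your route---building an auxiliary width-one $\FIop$-scheme from $\GL_m$-orbit closures, taking the invariant-theoretic quotient by the column-scaling torus, and performing Galois descent to $\FF_p$ so that field automorphisms are absorbed into the component count---is considerably more elaborate, and the steps you flag as ``routine'' (that the image under the invariant map is a closed reduced subscheme with the expected components, that the descended ideals satisfy $I_n \subseteq I_{n+1}$, and the junk-component bookkeeping) do require genuine care. But the argument goes through, and it has the merit of showing that the Main Theorem is in fact strong enough to recover Theorem~\ref{thm:Codes}, contrary to what the paper's remark might suggest. The paper's approach, by contrast, is much shorter and makes the point that the model-functor machinery has direct combinatorial applications independent of any algebro-geometric input.
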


This theorem is not a direct consequence of our Main Theorem, but
rather of one of the tools that we develop for the Main Theorem, see
\S\ref{ssec:ModelFunctors} and \S\ref{ssec:SecondQuasi}.

\subsection{Relations to existing literature}

The functorial viewpoint and the notions of $\FI$-algebras and
$\FIop$-schemes that we will use are strongly influenced by the literature
on $\FI$-modules \cite{Church12,Church14}. Furthermore, the insight
that counting combinatorial objects is best done through {\em
species}---functors from the category of finite sets with bijections
to itself---is due to \cite{Joyal81}. While we use neither nontrivial
results about $\FI$-modules nor nontrivial results about species, this
paper could not have been written without this background.

Given a wide-matrix scheme $(X_n)_n$ over $K$, one can define the
inverse limit $X_\infty:=\lim_{\ot n} X_n$. This limit is a variety in
the space of $c \times \NN$-matrices and preserved by the action of the
symmetric group $\Sym(\NN)$ permuting columns. Furthermore, for $n \gg 0$, $X_n$
is in fact the image of $X_\infty$ under projection---this follows
from Proposition~\ref{prop:Nicefy}---and so, as we are
interested in properties of $X_n$ for large $n$, $X_\infty$ contains all
relevant information.  Much literature on wide-matrix schemes uses this
set-up---a $\Sym(\NN)$-invariant subvariety of an infinite-dimensional
affine space---rather than the functorial set-up. But, as we will see,
for counting purposes the functorial set-up is more convenient.

The descending chain property of wide-matrix schemes (see
Section~\ref{ssec:Noetherianity}) was first established
in \cite{Cohen67,Cohen87} and then rediscovered in
\cite{Aschenbrenner07,Hillar09}, and used to prove the Independent
Set Theorem in algebraic statistics in the latter paper. A further
application to algebraic statistics is \cite{Draisma08b}. Images of
wide-matrix schemes under monomial maps also satisfy the descending
chain property \cite{Draisma13a}.

These Noetherianity results admit proofs using Gr\"obner methods in the
spirit of \cite{Sam14}, which can be turned into explicit algorithms. A
special-purpose algorithm was used in \cite{Brouwer09e} to find the
defining equations for the Gaussian two-factor model, a general-purpose
algorithm was implemented in Macaulay 2 \cite{Hillar13b}.  The results
of the current paper are also effective: there exists an algorithm that,
on input the finitely many equations defining a wide-matrix scheme,
computes the quasi-polynomial from the Main Theorem. But we believe
that it is unlikely that a general-purpose algorithm for this will ever
be implemented.

The sequence of ideals $(I_n)_n$ defining a wide-matrix scheme has a Hilbert
function in two variables: one for the degree and one for $n$. It
turns out to be a rational function of a very specific form
\cite{Gunturkun18,Krone17,Nagel17}; and the same holds for
finitely generated modules over an $\FI$-algebra that is
finitely generated in width one \cite{Nagel20}.

Further commutative algebra for wide-matrix schemes was developed in
\cite{Nagel17b}, where Noetherianity of finitely generated modules
over their coordinate rings was established; \cite{VanLe18}, where
the codimension of $X_n$ in its ambient space and the projective
dimension of $I_n$ in its ambient polynomial ring are studied; and
\cite{VanLe18b}, which concerns the Castelnuovo regularity of $I_n$. A
beautiful, as yet open conjecture from the latter two papers, is that
the projective dimension and the regularity both are precisely a linear
function of $n$ for $n \gg 0$. For codimension, this is established
in \cite[Theorem 3.8]{VanLe18}; it also follows from our work (see
Theorem~\ref{thm:DimComp}). The paper \cite{Juhnke20}
contains a number of open problems; Problem 7.3, which asks
about the behaviour of the primary decomposition, is close
in spirit to our more geometric result.

Noetherianity implies, roughly speaking, that each wide-matrix scheme is
a finite union of irreducible wide-matrix schemes. Here irreducibility
does not mean that each individual $X_n$ is irreducible, but rather
that $X_\infty$ is irreducible in the topology in which the closed
subsets are $\Sym(\NN)$-stable closed subsets of the space of $c
\times \NN$-matrices. For instance, the wide-matrix scheme where $c=1$
and $X_n=\{0,1\}^n$ turns out to be irreducible in this setting. In
\cite{Nagpal20}, these irreducible varieties are classified for $c=1$.
For general $c$, the algebraic and semi-algebraic geometry of such
symmetric subvarieties are studied in \cite{Kummer22};
Example~\ref{ex:Kummer} below comes from that paper. 

\subsection{Organisation of this paper}

This paper is organized as follows. $\FIop$-schemes are introduced
in Section 2. In particular, we define wide-matrix spaces in this
context. Every width-one $\FIop$-scheme of finite type is isomorphic to a
closed $\FIop$-subscheme of a wide-matrix space (see Lemma \ref{lm:Prod}).

In Section 3, we prove that any width-one $\FIop$-scheme of
finite type is of {\em product type} after a suitable shift and
a suitable localisation (see the Shift Theorem \ref{thm:Shift} and
Proposition~\ref{prop:Shift}). The shifting technique is also used by
the first author in his work on polynomial functors \cite{Draisma17},
and the Shift Theorem is reminiscent of, and was inspired by, the Shift
Theorem in \cite{Bik21}. It is the strongest new
structural result that we prove about wide-matrix schemes.

In Section 4, for an $\FIop$-scheme $X$, we define the component functor
$\cC_X$ from $\FI$ to the category $\PF$ of finite sets with partially
defined maps. The component functor is one of the most important notions
of this paper, and in the remainder of the paper we obtain an almost
complete combinatorial description of $\cC_X$ in the case where $X$
is a width-one $\FIop$-scheme.

Section 5 is devoted to the proof of the Main Theorem. In three steps,
we construct more and more refined combinatorial models for $\cC_X$.
The first ones, called {\em elementary model functors}, allow
us to prove the Main Theorem when $X$ is of product type
(see \S\ref{ssec:FiniteOverWide}). By the Shift Theorem this
situation is always attained by a shift and a localisation, and to undo the simplifications
caused by that shift and localisation, we need the two more complicated combinatorial
models dubbed {\em model functors} (see \S\ref{ssec:ModelFunctors})
and {\em pre-component functors} (see \S\ref{ssec:PreComponent}). A
major generalisation in the step from elementary model functors to model
functors is that we pass from counting orbits under a finite
group---in the application to $\cC_X$, this is the image of a Galois
group---{\em which is part of the defining data} of an
elementary model functor, to counting
orbits under a finite groupoid {\em which emerges by itself} from the defining
data of a model functor. The proof of Theorem~\ref{thm:ModelFunctor}
that model functors have a quasipolynomial count is entirely elementary,
but very subtle. A direct application is
Theorem~\ref{thm:Codes}.

In comparison, the step from model functors
to pre-component functors is conceptually small. In \S\ref{ssec:FinalQuasi} we prove
that pre-component functors always have a quasipolynomial count, and in
\S\ref{ssec:PreCompFIopScheme} we establish that the component functor
of a wide-matrix scheme satisfies the properties Compatibility
(1)--(3) of a pre-component
functor. This, then, completes the proof of the Main Theorem.

\section{Width-$1$ $\FIop$-schemes} \label{sec:FISchemes}

In this section we collect fundamental facts about $\FI$-algebras and
$\FIop$-schemes.

\subsection{The category $\FI$}

The category $\FI$ has as objects finite sets, and for $S,T \in \FI$
the hom-set $\Hom_\FI(S,T)$ is the set of {\em injections} $S \to T$. The
category $\FIop$ is its opposite category.

\subsection{$\FI$-algebras and $\FIop$-schemes}

Let $K$ be a ring (commutative, with $1$). All $K$-algebras $A$ are
required to be commutative, have a $1$, and the homomorphism $K \to A$
is required to send $1$ to $1$. Homomorphisms of $K$-algebras are
unital ring homomorphisms $A \to B$ compatible with the homomorphisms from $K$
into them.

\begin{de}
An $\FI$-algebra $B$ over $K$ is a covariant functor from $\FI$ to the
category of $K$-algebras with unital $K$-algebra homomorphisms. Dually,
$B$ gives rise to a {\em contravariant} functor $X$ from $\FI$ to the
category of affine schemes over $K$. To remind ourselves of the
contravariance of this functor, we call such a functor an affine
$\FIop$-scheme over $K$.

A morphism $B \to A$ of $\FI$-algebras over $K$ is a natural
transformation from $B$ to $A$: it consists of a $K$-algebra
homomorphism $\phi(S):B(S) \to A(S)$ for all $S$ such that for
all $S,T \in \FI$ and $\pi \in
\Hom_{\FI}(S,T)$ the following diagram commutes.
\[
\xymatrix{
B(S) \ar[r]^{\phi(S)} \ar[d]_{B(\pi)} & A(S) \ar[d]^{A(\pi)} \\
B(T) \ar[r]_{\phi(T)} & A(T).
}
\]
Morphisms of affine $\FIop$-schemes are defined dually. Since
we will only consider affine schemes, we will sometimes drop the
adjective ``affine''.
\end{de}

\begin{re} \label{re:B0}
If $B$ is an $\FI$-algebra over $K$, then we write $B_n:=B([n])$.  In fact,
$B$ is then also an $\FI$-algebra over $B_0$: for each finite set $S$ the
unique inclusion $\emptyset \to S$ yields an algebra homomorphism $B_0
\to B(S)$, and functoriality implies that the $K$-algebra homomorphisms
$B(S) \to B(T)$ corresponding to injections $S \to T$ are compatible
with the $B_0$-algebra structure. 
\end{re}

The classical equivalences of categories $B \to \Spec(B)$ and $X
\mapsto K[X]$ between $K$-algebras and affine schemes over $K$
yield equivalences of categories between $\FI$-algebras and affine
$\FIop$-schemes. Given an $\FI$-algebra $B$ over $K$, we write $\Spec(B)$
for the $\FIop$-scheme $S \mapsto \Spec(B(S))$ and given an affine
$\FIop$-scheme $X$, we write $K[X]$ for the $\FI$-algebra $S \mapsto
K[X(S)]$.

An {\em ideal} in an $\FI$-algebra $B$ has the obvious definition: it
consists of an ideal $I(S)$ for each $S \in \FI$ such that for each $\pi
\in \Hom_\FI(S,T)$ the map $B(\pi):B(S) \to B(T)$ maps $I(S)$ into $I(T)$.

Given an $\FI$-algebra $B$, for each $S \in \FI$ the symmetric group
$\Sym(S)$ acts from the left on $B(S)$: indeed, $\Sym(S)=\Hom_{\FI}(S,S)$,
and each $\pi$ in the latter set yields a $K$-algebra homomorphism
$B(\pi): B(S) \to B(S)$. The axioms expressing that $B$ is a functor
imply that $(\pi,b) \mapsto B(\pi)(b)$ is a left action of $\Sym(S)$
by $K$-algebra automorphisms on $B(S)$.

Similarly, given an $\FIop$-scheme $X$, for each $S \in \FI$ the
symmetric group $\Sym(S)$ acts on $X(S)$ by automorphisms of affine
$K$-schemes. When acting on points of $X(S)$ with values in a $K$-algebra
$L$, i.e., on the set of $K$-algebra homomorphisms $K[X(S)] \to L$,
this is a naturally a {\em right} action, reflecting the
fact that $X$ is a contravariant functor.

Tensor products of $\FI$-algebras over $K$ are defined in the
straightforward manner, and they correspond to products in the category
of affine $\FIop$-schemes over $K$.

\begin{re} \label{re:Sn}
The symmetric group $\Sym([n])$ acts on $B_n$ by $K$-algebra
automorphisms, and the map $B(\iota):B_n \to B_{n+1}$, where $\iota:[n]
\to [n+1]$ is the standard inclusion, is a $\Sym([n])$-equivariant $K$-algebra
homomorphism, if $\Sym([n])$ is regarded as the subgroup of $\Sym([n+1])$ consisting
of all permutations that fix $n+1$. Conversely, from the data (for all
$n$) of $B_n$,
the action of $\Sym([n])$ on $B_n$, and the $\Sym([n])$-equivariant map $B_n \to
B_{n+1}$ the $\FI$-algebra $B$ can be recovered up to isomorphism. This
gives another, more concrete picture of $\FI$-algebras similar to that
used in \S\ref{ssec:Main}. However,
the definition of $\FI$-algebras as a functor from $\FI$
to $K$-algebras is more elegant and, as we will see, often
more convenient. 
\end{re}

\subsection{Base change} \label{ssec:BaseChange}

\begin{de}
If $B$ is an $\FI$-algebra over a ring $K$, and $L$ is a $K$-algebra,
then we obtain an $\FI$-algebra $B_L$ over $L$ by setting $S \mapsto
L \otimes_K B(S)$. In the special case where $L$ is the localisation
$K[1/h]$ for some $h \in K$, we also write $B[1/h]$ for $B_L$.

Dually, if $X=\Spec(B)$ is the associated $\FIop$-scheme, then we write
$X_L=\Spec(B_L)$ for the base change, and $X[1/h]$ if $L=K[1/h]$.
\end{de}

\subsection{Wide-matrix spaces}

In this paper, the following $\FI$-algebras and
$\FIop$-schemes play a prominent role.

\begin{ex} \label{ex:AK}
Let $A_K$ be the $\FI$-algebra that maps $S$ to $K[x_{j} \mid j
\in S]$ and $\pi \in \Hom_\FI(S,T)$ to the $K$-algebra homomorphism
determined by $x_j \mapsto x_{\pi(j)}$. For each $c \in \ZZ_{\geq 0}$,
$A_K^{\otimes c}$ (where the tensor product is over $K$) is isomorphic to,
and will be identified with, the $\FI$-algebra over $K$ that maps $S$ to
$K[x_{i,j} \mid i \in [c], j \in S]$. Write $\Mat_{c,K}:=\Spec(A_K^{\otimes
c})$. If $L$ is a $K$-algebra, then the {\em set of
$L$-points}
$\Mat_{c,K}(L)$
is the contravariant functor from $\FI$ to sets that assigns to $S$
the set $L^{c \times S}$ of $c \times S$-matrices over $L$, and to a morphism $\pi:S \to T$
the map $L^{c \times T} = (L^c)^T \to (L^c)^S = L^{c \times S}$ where
the middle map is composition with $\pi$. The $\FIop$-scheme
$\Mat_{c,K}$ is called the {\em wide-matrix space over $K$
with $c$ rows} or, less precisely, a {\em wide-matrix space}.
\end{ex}

\subsection{Width}

Let $B$ be an $\FI$-algebra over $K$.

\begin{de}
For $S \in \FI$ and $b \in B(S)$, we call the minimal $n$ such that $b$
lies in $\pi B([n])$ for some $\pi \in \Hom_{\FI}([n],S)$ the {\em width}
of $b$, denoted $w(b)$.  
\end{de}

\begin{ex}
Assuming that $K$ is not the zero ring, the element $x_{1,1} + x_{2,4}
+ x_{3,4}^2 \in A_K^{\otimes 3}([4])$ has width $2$: it is the image
of $x_{1,1} + x_{2,2} + x_{3,2}^2$ under $A_K^{\otimes 3}(\pi)$ where
$\pi:[2] \to [4]$ is defined by $1 \mapsto 1$ and $2 \mapsto
4$.  
\end{ex}

Note that the width satisfies $w(b_1 + b_2),w(b_1
\cdot b_2) \leq w(b_1)+w(b_2)$.

\subsection{$\FI$-algebras finitely generated in width $\leq 1$}

\begin{de}
Let $B$ be an $\FI$-algebra. Let $S_i, i \in I$ be a collection of objects
in $\FI$ and for each $i \in I$ let $b_i$ be an element of $B(S_i)$.
There is a unique smallest $\FI$-subalgebra $R$ of $B$ such that $R(S_i)
\ni b_i$ for all $i$. This is called the {\em $\FI$-algebra generated
by the $b_i$}. Concretely, $R(S)$ is the $K$-subalgebra of $B(S)$
generated by all elements $\pi(b_i)$ for all $i \in I$ and $\pi \in
\Hom_\FI(S_i,S)$. 
\end{de}

\begin{de}
An $\FI$-algebra $B$ over $K$ is {\em finitely generated} if there exists
a finite collection $(b_i \in B(S_i))_{i \in I}$ of elements that
generates $B$. 
\end{de}

\begin{de}
An $\FI$-algebra $B$ over $K$ is {\em generated in width $\leq w$}
if there exists a collection $(b_i \in B(S_i))_{i \in I}$ of
elements of width $\leq w$ that generates $B$. 
\end{de}

\begin{re}
Our notion of {\em width} is called {\em degree} in the $\FI$-module
literature, e.g. in Definition \cite[Definition 2.14]{Church12}. We
reserve the word degree for degrees of monomials and polynomials.
\end{re}

We will be mostly interested in $\FI$ algebras that are
finitely generated in width $\leq 1$ in the following sense.

\begin{de}
An $\FI$-algebra $B$ over $K$ is {\em finitely generated in width $\leq
w$} if $B$ is finitely generated and generated in width $\leq w$. It is
straightforward to see that this is equivalent to the condition that
$B$ is generated by a finite collection of elements $b_i \in B(S_i)$
of width $\leq w$. 
\end{de}

Put differently yet, given an $\FI$-algebra, recall that $B_n$ is
defined as $B([n])$. Then $B$ is finitely generated in width $\leq w$
if and only if $B_0,B_1,\ldots,B_w$ are finitely generated $K$-algebras
and $B$ is generated by these as an $\FI$-algebra over $K$.

\begin{ex}
The algebra $A_K$ and its tensor power $A^{\otimes c}$ are finitely
generated in width $\leq 1$, namely, by the elements $x_{i,1} \in
A^{\otimes c}([1])$ with $i=1,\ldots,c$. 
\end{ex}

We now introduce the main characters of our paper.

\begin{de}
An {\em $\FIop$-scheme of width one}, or {\em width-one
$\FIop$-scheme}, of finite type over $K$
is the spectrum of an $\FI$-algebra over $K$ finitely generated in
width $\leq 1$. 
\end{de}

The class of $\FI$-algebras finitely generated in width at most $1$
is closed under taking finite direct sums and tensor products over
$K$. Dually, the corresponding class of schemes is closed under disjoint
unions and Cartesian products.

The following lemma will be useful later.

\begin{lm} \label{lm:Prod}
Let $B$ an $\FI$-algebra over $K$ finitely generated in width $\leq 1$,
let $X=\Spec(B)$ be the corresponding width-one $\FIop$-scheme of finite type
over $K$, and set $Z:=X([1])$. Then for each $S \in \FI$ the map $X(S)
\to \prod_{j \in S} X(\{j\}) \cong Z^{S}$, where the product is over
$\Spec(B_0)$, is a closed embedding. Furthermore, the
$\FIop$-scheme $Z^S$ is
isomorphic to a closed $\FIop$-subscheme of $\Mat_{c,B_0}$ for
some $c$.
\end{lm}

\begin{proof}
Dually, we need to show that the map $\bigotimes_{j \in S} B(\{j\})
\to B(S)$, where the tensor product is over $B_0$ and where $B\{j\}
\to B(S)$ comes from the inclusion $\{j\} \to S$, is surjective. This
follows from the fact that $B$ is generated in width at most $1$. For the
last statement, note that $B_1$ is finitely generated as a $K$-algebra,
hence {\em a fortiori} as a $B_0$-algebra. If $B_1$ is generated by $c$
elements over $B_0$, then $B$ is a quotient of $A_{B_0}^{\otimes c}$,
$Z$ is a closed subscheme of the $c$-dimensional affine space
$\AA^c_{B_0}$ over $B_0$, and $S \mapsto Z^S$ a closed
$\FIop$-subscheme of $\Mat_{c,B_0}$.
\end{proof}

\subsection{Noetherianity} \label{ssec:Noetherianity}

The following result is by now classical, and the starting point of
a growing body of literature on $\FI$-algebras.

\begin{thm} \label{thm:Noetherianity}
Let $K$ be a Noetherian ring. Then every $\FI$-algebra $B$ over $K$ that
is finitely generated in width $\leq 1$ is Noetherian, i.e., if $I_1
\subseteq I_2 \subseteq \cdots$ is an ascending chain of ideals in $B$,
then $I_n=I_{n+1}$ for all $n \gg 0$.
\end{thm}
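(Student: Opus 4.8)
The plan is to prove Theorem~\ref{thm:Noetherianity}, the Noetherianity of an $\FI$-algebra $B$ over a Noetherian ring $K$ that is finitely generated in width $\leq 1$. The statement asks that every ascending chain of ideals in $B$ stabilises, which is equivalent to the ascending chain condition on $\Sym([n])$-stable ideals of the polynomial rings $A_n$ in the concrete picture of Remark~\ref{re:Sn}. So the goal reduces to: if $J_n \subseteq A_n = K[x_{i,j} \mid i \in [c], j \in [n]]$ is a compatible family of $\Sym([n])$-stable ideals with $B(\pi)(J_n) \subseteq J_m$ for all injections, then the corresponding ideal in the colimit stabilises in each width.

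First I would reduce to the universal case. By Lemma~\ref{lm:Prod}, $B$ is a quotient of $A_{B_0}^{\otimes c}$ as an $\FI$-algebra, and since quotients of Noetherian $\FI$-algebras are Noetherian (ideals in the quotient pull back to ideals in $B$, preserving ascending chains), it suffices to prove the theorem for $B = A_K^{\otimes c}$ itself — the $\FI$-algebra $S \mapsto K[x_{i,j} \mid i \in [c], j \in S]$. Here I would invoke the Noetherianity of the base: since $K$ is Noetherian, $K[x_{i,j} \mid i \in [c], j \in [1]] = K[x_{1,1},\ldots,x_{c,1}]$ is a Noetherian ring, so by Hilbert's Basis Theorem the whole chain of rings $A_n$ is built from Noetherian rings. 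The content of the theorem is that \emph{equivariant} Noetherianity holds despite the rings $A_n$ growing without bound.

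The core of the argument is an $\FI$-equivariant Gröbner / partial-well-order argument, in the spirit of \cite{Aschenbrenner07,Hillar09} and \cite{Sam14} cited in the paper. Concretely, I would fix a monomial order on the monoid $M$ of all monomials in the variables $x_{i,j}$, $i \in [c]$, $j \in \NN$, compatible with the $\FI$-action, and for a compatible family of ideals $(J_n)_n$ form the family of leading-term monomial ideals $(\mathrm{in}(J_n))_n$. It then suffices to show the ascending chain condition for $\FI$-stable monomial ideals in $M$, i.e.\ that the partially ordered set of monomials, where $m \preceq m'$ iff $m'$ is divisible by $\pi(m)$ for some injection $\pi$, is a \textbf{well-partial-order}: no infinite antichains, no infinite strictly descending chains. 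This is where Higman's Lemma (well-partial-orderedness of finite sequences over a well-partial-ordered alphabet, under the subsequence-with-domination order) enters: a monomial in $c \times [n]$-variables is encoded as a finite word whose letters are the ``column exponent vectors'' in $\ZZ_{\geq 0}^c$, and $\ZZ_{\geq 0}^c$ with the componentwise order is a well-partial-order by Dickson's Lemma. An injection $[m] \to [n]$ realises exactly a subsequence relation (with extra all-zero columns inserted), and divisibility of column vectors is the componentwise order, so Higman's Lemma applies verbatim.

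The main obstacle — and the step requiring the most care — is the bookkeeping that upgrades the combinatorial well-partial-order statement back to the algebraic ascending chain condition: one must check that a non-stabilising chain of $\FI$-ideals would produce, via leading terms and the division algorithm, either an infinite antichain or an infinite descending chain of monomials under $\preceq$, contradicting Higman. This is the standard but delicate ``equivariant Gröbner basis'' packaging, and also where the finite-generation-in-width-$\leq 1$ hypothesis is essential: it guarantees that the relevant monoid of monomials is genuinely modelled by \emph{sequences} over the fixed finite-dimensional alphabet $\ZZ_{\geq 0}^c$, rather than by more complex combinatorial structures (as would happen for generators of width $\geq 2$, where the analogous ordering is known to fail). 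I would close by remarking that this is classical — the theorem is stated as ``by now classical'' and attributed to \cite{Cohen67,Cohen87,Aschenbrenner07,Hillar09} — so in the paper proper one would simply cite these sources rather than reproduce the Higman's Lemma argument in full.
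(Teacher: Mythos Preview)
Your proposal is correct and matches the paper's treatment: the paper does not give a proof but states the result as classical and cites \cite{Cohen67,Cohen87,Aschenbrenner07,Hillar09} (and \cite[Theorem~6.15]{Nagel17b} for the stronger module-theoretic version), which is precisely the Higman/equivariant-Gr\"obner argument you sketch. Your closing remark that one would simply cite these sources is exactly what the paper does.
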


A Gr\"obner basis proof of a closely related theorem---formulated for an
infinite symmetric group acting on an infinite-dimensional polynomial
ring---first appeared in \cite{Cohen67,Cohen87} and was rediscovered
in \cite{Aschenbrenner07} (for $c=1$) and \cite{Hillar09} (for general
$c$). In the current set-up, the result follows from the reference
in the following remark.

\begin{re}
In fact, $B$ is Noetherian in a stronger sense: any finitely generated
$B$-module satisfies the ascending chain condition on
submodules \cite[Theorem 6.15]{Nagel17b}.
\end{re}

\subsection{Nice width-one $\FIop$-schemes}

The following consequence of Noetherianity will be useful to us: it implies
that when we are interested in the tail of a width-one $\FIop$-scheme $X$
of finite type over a Noetherian ring $K$, i.e., in $X([n])$ for $n \gg
0$, then we may without loss of generality assume that the map $X([n+1])
\to X([n])$ dual to the inclusion $[n] \to [n+1]$ is dominant for all
$n$.

\begin{prop} \label{prop:Nicefy}
Let $K$ be a Noetherian ring and let $X=\Spec(B)$ be an affine width-one
scheme of finite type over $K$. Then there exists an $n_0 \in \ZZ_{\geq
0}$ such that for all $S,T \in \FI$ with $n_0 \leq |S| \leq |T|$ and
all $\pi \in \Hom_\FI(S,T)$, the homomorphism $B(\pi):B(S) \to B(T)$
is injective. Define
\[ B'(S):=\begin{cases}
		B(S) & \text{if $|S| \geq n_0$, and}\\
		B(S)/\ker(B(\sigma)) & \text{if
		$|S| \leq n_0$}
\end{cases} \]
where $\sigma$ is any chosen element of $\Hom_\FI(S,[n_0])$ (the
result doesn't depend on $\sigma$). For any $S,T \in \FI$ and $\pi
\in \Hom_\FI(S,T)$ the $K$-algebra homomorphism $B(\pi): B(S) \to B(T)$
induces a well-defined $K$-algebra homomorphism $B'(\pi):B'(S) \to B'(T)$,
and thus $B'$ becomes an $\FI$-algebra over $K$, finitely generated in
width $\leq 1$, with the property that $B'(\pi)$ is injective
for all $\pi \in \Hom_\FI(S,T)$. Set $X':=\Spec(B')$; then $X'(\pi):X'(T)
\to X'(S)$ is dominant for all $\pi \in \Hom_\FI(S,T)$.
\end{prop}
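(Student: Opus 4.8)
The plan is to deduce everything from Noetherianity (Theorem~\ref{thm:Noetherianity}). First I would produce the threshold $n_0$. For each $n$, consider the ideal $J_n \subseteq B$ whose value at $S$ is $\ker(B(\sigma))$ for any $\sigma \in \Hom_\FI(S,[n])$ when $|S| \le n$, and $0$ otherwise; one checks this is independent of the chosen $\sigma$ (using that $\Sym([n])$ acts transitively on such $\sigma$ and acts on $B([n])$ by automorphisms), and that it is genuinely an ideal in the $\FI$-algebra sense, i.e.\ $B(\pi)$ maps $J_n(S)$ into $J_n(T)$. Moreover $J_1 \subseteq J_2 \subseteq \cdots$, since an element killed by some injection $S \to [n]$ is also killed by any injection $S \to [n+1]$ obtained by post-composing with $[n] \hookrightarrow [n+1]$ (here we use that $B$ is a functor, so $B$ of a composite is the composite of the $B$'s, and that injectivity is transitive in the obvious sense). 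By Theorem~\ref{thm:Noetherianity} this chain stabilises: there is an $n_0$ with $J_{n_0} = J_{n_0+1} = \cdots$. I then claim this $n_0$ has the stated injectivity property: given $n_0 \le |S| \le |T|$ and $\pi \in \Hom_\FI(S,T)$, an element $b \in \ker(B(\pi))$ lies in $J_{|T|}(S)$, hence in $J_{n_0}(S) = 0$ (the last equality because $|S| \ge n_0$ forces $J_{n_0}(S) = 0$ by construction). This is the step I expect to be the main obstacle --- not because any individual verification is hard, but because getting the bookkeeping exactly right (independence of $\sigma$, the ideal property, the stabilisation argument, and the final extraction of injectivity from stabilisation) requires care.

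Next I would verify that $B'$ as defined is a well-defined $\FI$-algebra. The point is that $B'$ is just $B/J_{n_0}$: indeed for $|S| \ge n_0$ we have $J_{n_0}(S) = 0$ so $B'(S) = B(S)$, and for $|S| \le n_0$ we have $B'(S) = B(S)/\ker(B(\sigma)) = B(S)/J_{n_0}(S)$ for $\sigma \in \Hom_\FI(S,[n_0])$ --- and $\ker(B(\sigma))$ is independent of $\sigma$ and equals $J_{n_0}(S)$ by the stabilisation (for $|S| \le n_0$, $J_{|S|}(S)$ may be larger than what one computes with a single map, but the stabilised ideal $J_{n_0}$ is computed precisely by maps into $[n_0]$). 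Since $J_{n_0}$ is an ideal in the $\FI$-algebra $B$, the quotient $B' = B/J_{n_0}$ is automatically an $\FI$-algebra, the maps $B'(\pi)$ are the induced maps, and finite generation in width $\le 1$ is inherited by quotients. Injectivity of $B'(\pi)$ for all $\pi$: if $|S| \ge n_0$ this is the injectivity property established above (applied with $B$, noting $B(S) = B'(S)$ and the kernel of $B'(\pi)$ is the image of $\ker(B(\pi))$ which is $0$); if $|S| \le n_0$, factor $\pi$ through an injection $S \to [n_0]$ and an injection $[n_0] \to T$ (possible after enlarging, or directly if $|T| \ge n_0$; the case $|T| < n_0$ needs $|T| \ge |S|$ and a small separate argument reducing to maps into $[n_0]$), and use that $B'(S \to [n_0])$ is injective by the very definition of $B'(S)$ as the quotient by that kernel, while $B'([n_0] \to T)$ is injective by the previous case.

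Finally, the conclusion about $X' = \Spec(B')$: for a morphism of affine schemes $\Spec(\phi) : \Spec(R') \to \Spec(R)$ coming from a ring homomorphism $\phi : R \to R'$, the image is dense (i.e.\ the morphism is dominant) precisely when the kernel of $\phi$ is contained in the nilradical of $R$ --- in particular when $\phi$ is injective. Since each $B'(\pi) : B'(S) \to B'(T)$ is injective, the dual morphism $X'(\pi) : X'(T) \to X'(S)$ is dominant for every $\pi \in \Hom_\FI(S,T)$. This last paragraph is routine; the substance of the proof is entirely in the construction of $n_0$ via the stabilising chain $(J_n)_n$.
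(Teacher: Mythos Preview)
Your approach is correct and close in spirit to the paper's, but organised more directly. Both proofs deduce the existence of $n_0$ from Noetherianity via an ascending chain of $\FI$-ideals built out of the kernels $\ker B(\sigma)$. The paper argues by contradiction: assuming no $n_0$ exists, it finds a sequence $m_1<m_2<\cdots$ with $\ker B(\pi_i)\neq 0$ for some $\pi_i:[m_i]\to[m_{i}+1]$, lets $I_i$ be the $\FI$-ideal generated by $\bigcup_{j\le i}\ker B(\pi_j)$, observes $I_i(T)=0$ for $|T|>m_i$, and concludes the chain is strictly increasing. You instead build the chain $(J_n)_n$ explicitly and read $n_0$ off its stabilisation. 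Your identification $B'=B/J_{n_0}$ is a genuine simplification over the paper's case-by-case verification that $B(\pi)$ descends to $B'(\pi)$; once $J_{n_0}$ is known to be an $\FI$-ideal, functoriality and finite generation in width $\le 1$ of $B'$ are automatic.

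Two small points of bookkeeping you should make explicit when writing this up. First, the verification that $J_n$ is an $\FI$-ideal has a nonobvious case: when $|S|\le n<|T|$ you need $B(\pi)(J_n(S))=0$, which amounts to $\ker B(\sigma)\subseteq\ker B(\pi)$ for $\sigma:S\to[n]$ and $\pi:S\to T$; this follows because any two injections $S\to T$ have the same kernel and one such injection factors through $\sigma$. Second, your parenthetical about $J_{|S|}(S)$ being ``larger than what one computes with a single map'' is garbled (in fact $J_{|S|}(S)=0$ since the map $S\to[|S|]$ is a bijection); what you want to say is simply that for $|S|\le n_0$ the definition of $J_{n_0}(S)$ coincides on the nose with the kernel defining $B'(S)$, which is immediate.
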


\begin{proof}
First we show that for all $S,T \in \FI$ we have $\ker B(\pi) =
\ker B(\sigma)$ for all $\pi , \sigma \in \Hom_{FI} (S,T)$. For
any two injections $\pi, \sigma : S \rightarrow T$ there exists a
permutation $\tau$ of $T$ such that $\pi = \tau \circ \sigma $. For
$f \in \ker B(\sigma)$ we have $B(\pi)(f)=B(\tau \circ \sigma)(f)=
B(\tau)(B(\sigma)(f))= B(\tau)(0)=0$. This implies that $\ker B(\sigma)
\subset \ker B(\pi)$. By symmetry, also the reverse inclusion holds. In
particular this shows that $\ker B(\sigma)$ is independent of the choice
of $\sigma$ and it is stable under the action of the group $\Sym(S)$.

Now suppose that the first claim of the proposition is not true, that
is, there does not exist such an $n_0$. Then there exists a strictly
increasing sequence of positive integers $(m_i)_i$ and injections $\pi_i :
[m_i] \rightarrow [m_i+1]$ such that for all $i$, $\ker B(\pi_i)$ is not
trivial. Let $I_i$ be the $\FI$-ideal in $B$ generated by $\bigcup_{j=1}^i
\ker B(\pi_j)$; by the first paragraph, $I_i(T)=\{0\}$ for all $T$ with
$|T|>m_i$. Hence the sequence $(I_i)_i$ is a strictly increasing chain
of $\FI$-ideals of $B$; this is a contradiction to the fact that $B$
is Noetherian.

Let $S,T \in \FI$ and let $\pi \in \Hom_{\FI}(S,T)$. If $|S| \geq n_0$,
then $B'(S)=B(S)$ and it is immediate that $B(\pi):B(S) \to B(T)$ induces
a $K$-algebra homomorphism $B'(S) \to B'(T)$. Otherwise, let $\sigma:S
\to [n_0]$ be an injection, so that $B'(S)=B(S)/\ker B(\sigma)$. If $|T|
\geq n_0$, then $\pi$ factors via $\sigma$ and it follows that $\ker
B(\pi) \supseteq \ker B(\sigma)$; again, $B(\pi)$ induces a map $B'(S)
\to B'(T)=B(T)$. Finally, if also $|T| \leq n_0$, then let $\iota:T
\to [n_0]$ be an injection. Replace $\sigma$ by $\iota \circ \pi$,
another injection $S \to [n_0]$. Then $\ker B(\sigma)=\ker (B(\iota)
\circ B(\pi))$ by the first paragraph, and hence $B(\pi)$ maps $\ker
B(\sigma)$ into $\ker B(\iota)$, so that, once more, it induces a map
$B'(S) \to B'(T)$.

The check that $B'$ is an $\FI$-algebra over $K$ finitely generated in
width $\leq 1$ is straightforward, and the check that each $B'(\pi)$
is injective follows from a similar analysis to that in the previous
paragraph. The final statement is standard: injective $K$-algebra
homomorphisms yield dominant morphisms.
\end{proof}

\begin{de}
We call an $\FI$-algebra $B$ over a ring $K$ {\em nice}
if for all $\pi \in \Hom_{\FI}(S,T)$ the map $B(\pi):B(S)
\to B(T)$ is injective; also its spectrum is then called
nice. Proposition~\ref{prop:Nicefy} says that if $K$ is Noetherian,
then any width-one affine $\FIop$-scheme of finite type over $K$ agrees
with a nice scheme for sufficiently large $S$.
\end{de}

\begin{lm} \label{lm:LocNice}
Let $B$ be a nice $\FI$-algebra over $K$ and let $h \in K$.
Then $B[1/h]$ is a nice $\FI$-algebra over $K[1/h]$.
\end{lm}

\begin{proof}
For each $\pi \in \Hom_\FI(S,T)$, $B[1/h](\pi)$ is the $K[1/h]$-algebra
homomorphism $B(S)[1/h] \to B(T)[1/h]$ obtained by the $K$-algebra
homomorphism $B(S) \to B(T)$ by localisation. By assumption, the latter is
injective. Hence, since localisation is an exact functor from $K$-modules
to $K[1/h]$-modules, so is the former.
\end{proof}

\subsection{Reduced $\FIop$-schemes}

\begin{de}
The $\FI$-algebra $B$ over $K$ is called {\em reduced} if $B(S)$ has no
nonzero nilpotent elements for any $S \in \FI$. Then also $X=\Spec(B)$
is called reduced.
\end{de}

The following lemma is immediate.

\begin{lm} \label{lm:Reduced}
Let $B$ be an $\FI$-algebra over $K$ and for each $S \in \FI$ let
$B^\red(S)$ be the quotient of $B(S)$ by the ideal of nilpotent
elements. Then for $\pi \in \Hom_\FI(S,T)$ the homomorphism $B(\pi)$
induces a homomorphism $B^\red(S) \to B^\red(T)$, and this makes $B^\red$
into a reduced $\FI$-algebra over $K$. Furthermore, if $B$ is finitely
generated in width $\leq 1$, then so is $B^\red$.
\hfill $\square$
\end{lm}

It follows that, to prove our Main Theorem, we may always assume that $X$
is reduced.

\subsection{Shifting}

The idea of shifting an $\FI$-structure over a finite set goes back to
\cite{Church12}. The first author's also used it in his work on topological 
Noetherianity of polynomial functors \cite{Draisma17}, except that
there, one shifts over a vector space. 

\begin{de}
Let $S_0$ be a finite set. Then $\Sh_{S_0}:\FI \to \FI$ is the functor
that sends $S$ to the disjoint union $S_0 \sqcup S$ and $\pi \in \Hom_\FI(S,T)$ to $\Sh_{S_0}
\pi:S_0 \sqcup S \to S_0 \sqcup T$ that is the identity on $S_0$ and equal
to $\pi$ on $S$. For an $\FI$-algebra $B$ over $K$ we write $\Sh_{S_0}
B:=B \circ \Sh_{S_0}$ and for the affine $\FIop$-scheme $X=\Spec(B)$
over $K$ we write $\Sh_{S_0} X:=X \circ \Sh_{S_0} = \Spec(\Sh_{S_0}
B)$. Furthermore, for a homomorphism $\phi:B \to R$ of $\FI$-algebras
over $K$, we write $\Sh_{S_0} \phi$ for the morphism $\Sh_{S_0} B \to
\Sh_{S_0} R$ that sends $S$ to $\phi(S_0 \sqcup S)$, and similarly
for morphisms of affine $\FIop$-schemes. A straightforward check shows
that $\Sh_{S_0}$ is a covariant functor from $\FI$-algebras over $K$
into itself and from affine $\FIop$-schemes over $K$
into itself.
\end{de}

If $B$ is finitely generated in width $\leq 1$, then so is $\Sh_{S_0}
B$; and hence, if $X$ is a width-one $\FIop$-scheme of finite type over
$K$, then so is $\Sh_{S_0} X$.

\begin{re}  \label{re:Basis}
If $B':=\Sh_{S_0} B$, then $B'$ is naturally an $\FI$-algebra over
$B'_0=B(S_0)$ (see Remark~\ref{re:B0}). Thus
shifting naturally leads to a change of base ring---informally, by
shifting we ``move some functions into the constants''. For an $f \in
B(S_0)$, its image in $B(S_0 \sqcup S)$ under $B(\iota)$, where $\iota$
is the natural injection $S_0 \to S_0 \cup S$, will also be denoted
simply by $f$. This is slight abuse of notation, especially as $B(\iota)$
needs not be an injection if $B$ is not nice, but this will not lead
to confusion.

In the interpretation from Remark~\ref{re:Sn} of $\FI$-algebras consisting
of algebras acted upon by $\Sym([n])$ with suitable maps between them,
one may model shifting by restricting the action to the subgroup of
$\Sym([n])$ that fixes the numbers $1$ up to $n_0:=|S_0|$. We will,
however, not explicitly use this model.
\end{re}

For future use, we note that $\Sh_{S_0}(\Sh_{S_1} B)$ is canonically
isomorphic to $\Sh_{S_0 \sqcup S_1} B$, and similarly for $\FIop$-schemes.
Furthermore, shifting preserves reducedness and niceness.

\section{The Shift Theorem}
\label{sec:Finiteness}

\subsection{Formulation of the Shift Theorem}

Recall from Lemma~\ref{lm:Prod} that a width-one $\FIop$-scheme
$X=\Spec(B)$ of finite type over a ring $K$ is a closed $\FIop$-subscheme
of $S \mapsto Z^S$, where $Z=X([1])$ and where the product is over
$B_0$.  In this section we establish the fundamental result that
in fact, after a suitable shift and localisation, $X$ becomes {\em equal}
to such a product.

\begin{thm}[Shift Theorem] \label{thm:Shift}
Let $B$ be a reduced and nice $\FI$-algebra that is finitely generated in
width $\leq 1$ over a ring $K$, assume that $1 \neq 0$ in $B_0$, and set
$X:=\Spec(B)$. Then there exists an $S_0 \in \FI$ and a nonzero element $h
\in B(S_0)$ such that $X':=(\Sh_{S_0} X)[1/h]$ is isomorphic to $S \mapsto
Z^S$, where $Z=X'([1])$ and where the product is over $B(S_0)[1/h]$.
\end{thm}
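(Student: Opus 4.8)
## Proof proposal for the Shift Theorem

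The plan is to reduce the statement to a finite amount of commutative algebra over $B_1 := B([1])$, and then to use Noetherianity of $B$ together with a generic flatness / generic freeness argument to clear denominators all at once. First I would fix a set of generators $b_1, \ldots, b_r \in B_1$ of $B_1$ as a $K$-algebra (possible since $B$ is finitely generated in width $\leq 1$). By Lemma~\ref{lm:Prod}, for each $S \in \FI$ the natural map $\mu_S : \bigotimes_{j \in S} B(\{j\}) \to B(S)$ (tensor over $B_0$) is a closed embedding on spectra, i.e.\ surjective on rings; the content of the theorem is that, after a shift and a localisation, this surjection becomes an \emph{isomorphism}, equivalently that its kernel becomes zero. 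So the whole game is to kill the kernel $J(S) := \ker(\mu_S)$ uniformly in $S$, using only one shift $S_0$ and one element $h \in B(S_0)$.

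The key reduction is that it suffices to handle $S = \{1,2\}$, i.e.\ to make $\mu_{\{1,2\}} : B(\{1\}) \otimes_{B_0} B(\{2\}) \to B(\{1,2\})$ an isomorphism after shifting and localising; the case of general $S$ then follows by a straightforward induction on $|S|$, since $B$ being generated in width $\leq 1$ means $B(S)$ is generated by the images of the $B(\{j\})$, and the relations among them are, by niceness and the $\FI$-structure, ``pulled back'' from relations over two-element subsets — each relation has some finite width, and after a large enough shift every relation lives over the shifted part. (This is the place where I would want to be careful: one must argue that \emph{finitely many} relations suffice, which is exactly where Theorem~\ref{thm:Noetherianity} enters — the $\FI$-ideal $J = (J(S))_S$ inside $B^{\otimes}$ is finitely generated, so all its generators have bounded width, and a single shift $S_0$ absorbs them all.) Concretely: choose $S_0$ large enough that all the generators of $J$ are images from $B(S_0\sqcup\{1\})$ and $B(S_0 \sqcup \{1,2\})$, so that over the base ring $B(S_0)$ the discrepancy between $\Sh_{S_0}B$ and the product $S \mapsto Z^S$ is controlled by finitely many elements of $B(S_0 \sqcup \{1,2\})$.

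Having localised the problem to a single, finitely generated situation over the Noetherian ring $B_0' := B(S_0)$, I would invoke generic flatness (Grothendieck's lemma: a finitely generated algebra over a Noetherian domain is free after inverting one element — applied, if necessary, on each irreducible component after replacing $B_0'$ by $B_0'^{\red}$, which is legitimate since $B$ is reduced and by Lemma~\ref{lm:Reduced}) to find a single nonzero $h \in B_0'$ such that $B(S_0 \sqcup \{1\})[1/h]$ is free over $B_0'[1/h]$, and such that the finitely many relation-elements defining $J$ over $B_0'$ become identically zero after inverting $h$ — here one uses that over the generic point (a field, or product of fields) the map $\mu$ is already an isomorphism of schemes $X(S)_{\text{generic}} \cong Z^S_{\text{generic}}$, because $X$ is nice so no component of the source is crushed, forcing the ideal of relations to be generically zero. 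Inverting that $h$ and using Lemma~\ref{lm:LocNice} to preserve niceness, the maps $\mu_S[1/h]$ become isomorphisms for $|S| = 2$, and hence by the induction above for all $S$, giving $X' = (\Sh_{S_0}X)[1/h] \cong (S \mapsto Z^S)$ over $B_0'[1/h]$ with $Z = X'([1])$.

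The main obstacle I anticipate is the uniformity in $S$: a priori a single $h$ only handles $S$ up to some bounded size, and one needs that the \emph{same} $h$ works for all $S$ simultaneously. This is forced by the $\FI$-structure — once $\mu_{\{1,2\}}[1/h]$ is an isomorphism over $B_0'[1/h]$, the multiplicativity $B(S) = B(S') \otimes_{B(S'')} \cdots$ coming from width $\leq 1$ propagates the isomorphism to all larger $S$ without requiring further localisation — but making this propagation precise, i.e.\ checking that the isomorphism $\Sh_{S_0}B \otimes$-decomposes compatibly with the $\FI$-maps (restriction, adding a point, permuting), is the delicate bookkeeping step. I would organise it as: (i) isomorphism on two-element sets; (ii) the $B_0'[1/h]$-algebra $\Sh_{S_0}B[1/h]$ is generated in width $\leq 1$ and, by (i), the only relations among the width-$1$ generators are the ones already present in $Z^{\{1,2\}}$; (iii) therefore $\Sh_{S_0}B[1/h](S)$ is, as a quotient of $\bigotimes_{j\in S} Z$-coordinate rings, equal to that tensor power — with the $\FI$-functoriality being immediate from the construction.
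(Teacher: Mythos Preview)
Your proposal has a genuine gap at its central step: the claim that ``over the generic point the map $\mu$ is already an isomorphism $\ldots$ because $X$ is nice so no component of the source is crushed, forcing the ideal of relations to be generically zero'' is false. Niceness says that the $\FI$-structure maps $X(T)\to X(S)$ are dominant; it says nothing about the closed embedding $X(S)\hookrightarrow Z^S$ from Lemma~\ref{lm:Prod}. Take Example~\ref{ex:cube}: there $K=B_0=\CC$ is already a field (so ``passing to the generic point'' does nothing), $Z=X([1])=\AA^1_{\CC}$, but $X([2])$ is the hypersurface $x_1^d=x_2^d$, a proper closed subscheme of $Z^2=\AA^2_{\CC}$. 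The kernel $J(\{1,2\})$ is the nonzero ideal $(x_1^d-x_2^d)$, and no localisation of $B_0=\CC$ will kill it. What actually makes this example work is that after \emph{shifting} by $S_0=[1]$, the width-two relation $x_i^d-x_j^d$ becomes the difference $(x_0^d-x_j^d)-(x_0^d-x_i^d)$ of width-one relations over the new base $B(S_0)=\CC[x_0]$; this mechanism is entirely absent from your argument, which treats the shift as merely ``absorbing finitely many generators'' and then hopes a localisation finishes the job.

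There are two secondary issues. First, you invoke Theorem~\ref{thm:Noetherianity} to get finite generation of $J$, but the Shift Theorem does not assume $K$ Noetherian; the paper circumvents this by using Dickson's lemma on exponent vectors in $\ZZ_{\geq 0}^c$, which needs no hypothesis on $K$. Second, your reduction to $|S|=2$ is not justified: even after a shift, there may remain relations of width $>2$ that are not generated by width-$\leq 2$ ones (Example~\ref{ex:Planes} with $x_ix_jx_k$ illustrates that a shift alone does not suffice---one must also invert the leading coefficient $x_1x_2$). The paper's proof is quite different: it embeds $X$ in $\Mat_{c,B_0}$, fixes a monomial order on $\ZZ_{\geq 0}^c$, and tracks the set $M$ of leading exponents of \emph{monic} elements of $I([1])$. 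Either these suffice to reduce every $f\in I([n])$---and then $X$ is already a product---or some $f$ survives, in which case one shifts by $[n_0-1]$ and inverts its leading coefficient, strictly enlarging $M$. Dickson's lemma then guarantees termination, and Lemma~\ref{lm:ShiftAndLocalise} collapses the finitely many steps into a single shift-and-localise.
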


\subsection{Shift-and-localise}

Before proving the Shift Theorem, we establish that
shifting and localisation commute in a suitable sense.

\begin{lm} \label{lm:ShiftAndLocalise}
Let $B$ be a reduced $\FI$-algebra over $K$, $S_0,S_1 \in \FI$, $h_0 \in
B(S_0)$ nonzero, $B':=(\Sh_{S_0} B)[1/h_0]$, $h_1 \in B'(S_1)$ nonzero,
and $B'':=(\Sh_{S_1} B')[1/h_1]$. Then there exists a nonzero $h \in
B(S_0 \sqcup S_1)$ such that $(\Sh_{S_0 \sqcup S_1} B)[1/h]
\cong B''$ as $\FI$-algebras over $K$.
\end{lm}

\begin{proof}
By multiplying $h_1$ with a suitable power of the image of $h_0$ in
$B'(S_1)$, we achieve that $h_1$ lies in the image of $B(S_0 \sqcup S_1)$
in $B(S_0 \sqcup S_1)[1/h_0]=B'(S_1)$. Let $\tilde{h}_1$ be an element
of $B(S_0 \sqcup S_1)$ mapping to $h_1$. Then, by a straightforward
computation, $h:=h_0 \tilde{h}_1 \in B(S_0 \sqcup S_1)$ does the trick.
\end{proof}

\subsection{Proof of the Shift Theorem}

\begin{proof}
By Lemma~\ref{lm:Prod}, $X$ is (isomorphic to) a closed $\FIop$-subscheme
of $\Mat_{c,B_0}$ for some $c$. Let $R:S \to B_0[x_{ij} \mid i \in
[c], j \in S]$ be the coordinate ring of the latter wide-matrix space,
and let $I$ be the ideal of $X$ in $R$.

Fix any monomial order on $\ZZ_{\geq
0}^c$. We will use this order to compare monomials in the variables
$x_{1j},\ldots,x_{cj}$ for any $j$.

Elements of $I([1])$ are $B_0$-linear combinations of monomials
$x_{1,1}^{\alpha_1} \cdots x_{c,1}^{\alpha_c}$ with $\alpha \in \ZZ_{\geq
0}^c$. Let $M \subseteq \ZZ_{\geq 0}^c$ be the set of (exponent vectors
of) leading monomials of {\em monic} elements of $I([1])$. By Dickson's
lemma, there exist finitely many monic elements $f_1,\ldots,f_k \in
I([1])$ whose leading monomials $u_1,\ldots,u_k$ generate $M$.

Now there are two possibilities. Either for every $n \in \ZZ_{\geq 1}$
and every nonzero $f \in I([n]) \subseteq R([n])$, some monomial in
$f$ is divisible by $R(\pi)u_i$ for some $i \in [k]$ and some $\pi \in
\Hom_\FI([1],[n])$---or not. In the former case, using that the $f_i$
are monic, we can do division with remainder by the $R(\pi)f_i$ until the
remainder is zero, and it follows that $f_1,\ldots,f_k \in I([1])$
generate the $\FI$-ideal $I$. Then $X$ itself is a product
as desired---indeed, by Lemma~\ref{lm:Prod}, $X$ is a closed
$\FIop$-subscheme of the $\FIop$-scheme $S \mapsto X([1])^S$, and the
fact that the $\FI$-ideal of $X$ is generated by $I([1])$ implies that
the corresponding closed embedding is an isomorphism. Hence in this case we can
take $S_0:=\emptyset$ and $h:=1 \neq 0 \in B_0$.

In the latter case, let $n_0$ be minimal such that there exists a
nonzero $f \in I([n_0])$ none of whose terms are divisible by any
$R(\pi)u_i$. Regard $f$ as a polynomial in $x_{1,n_0},\ldots,x_{c,n_0}$
with coefficients in $R([n_0-1])$, let $u=x_{1,n_0}^{\alpha_1} \cdots
x_{c,n_0}^{\alpha_c}$ be the leading monomial of $f$, and let $\tilde{h}
\in R([n_0-1])$ be the coefficient of $u$ in $f$. Now $\tilde{h} \not
\in I([n_0-1])$ by minimality of $n_0$ and the fact that no term in
$\tilde{h}$ is divisible by any $R(\pi)u_i$ with $i \in [k]$ and $\pi \in
\Hom_{\FI}([1],[n-1])$---indeed, such a term, multiplied with $u$, would
yield a term in $f$ with the same property. Set $S_0:=[n_0-1]$ and let $h$
be the image of $\tilde{h}$ in $B(S_0)$; this is nonzero by construction.

Now set $B':=(\Sh_{S_0} B)[1/h]$ and $X':=\Spec(B')$, and note that $1
\neq 0$ in $B'_0$. Then $X'$ is a
closed $\FIop$-subscheme of $\Mat_{c,B'_0}$, and we claim that if we
construct $M' \subseteq \ZZ_{\geq 0}^c$ for $X'$ in the same manner as
we constructed $M$ for $X$, then $M' \supsetneq M$. Indeed, if $\iota:[1]
\to S_0 \sqcup [1]$ is the natural inclusion, then $R(\iota)$ maps $f_i$
to an element in the ideal of $X(S_0 \sqcup [1])$ with the same leading
monomial $u_i$, and this maps to an element of the ideal of $X'([1])$
with that same leading monomial. This shows that $M' \supseteq M$.
Furthermore, via the bijection $\tau:[n_0] \to S_0 \sqcup [1]$ that is
the identity on $S_0=[n_0-1]$ and sends $n_0$ to $1$ we obtain another
element $R(\tau)f$ in the ideal of $X(S_0 \sqcup [1])$, whose image in the
ideal of $X'([1])$ has an invertible leading coefficient (namely, $h$)
and leading monomial $x_{1,1}^{\alpha_1} \cdots x_{c,1}^{\alpha_c}$. We
thus find that $\alpha \in M'$, while $\alpha \not \in M$ by construction.

The fact that $B$ is nice and reduced implies that so is $B'$. Hence we can
continue in the same manner with $B'$.  By Dickson's lemma, the set $M$
can strictly increase only finitely many times. Hence after finitely
many shift-and-localise steps, we reach the former case, where we know
that $X$ is a product.

Finally, we invoke Lemma~\ref{lm:ShiftAndLocalise} to conclude that this
finite sequence of shift-and-localise steps can be turned into a single
shift followed by a single localisation inverting a nonzero element.
\end{proof}

We will use the following strengthening of the Shift Theorem
in the case where $K$ is Noetherian.

\begin{prop} \label{prop:Shift}
In the setting of the Shift Theorem, if we further assume that $K$
is Noetherian, then there exists a
nonzero $h' \in B'_0$ such that $B'':=B'[1/h']$ and $X'':=\Spec(B'')$
have the following properties:
\begin{enumerate}
\item like in the Shift Theorem, $X''$ is isomorphic to $S \mapsto V^S$ where
$V:=X''([1])$ and where the product is over $B''_0$;
\item $B''_0$ is a domain; and
\item for each $S \in \FI$, every irreducible component of $V^S$ maps
dominantly into $\Spec(B''_0)$.
\end{enumerate}
\end{prop}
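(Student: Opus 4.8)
The plan is to start from the Shift Theorem's conclusion, which already gives us an $S_0$ and a nonzero $h \in B(S_0)$ so that $X' = (\Sh_{S_0} X)[1/h]$ is the product scheme $S \mapsto Z^S$ over the ring $B'_0 = B(S_0)[1/h]$. We must further localise $B'_0$ at one nonzero element $h'$ to simultaneously achieve (2) that $B''_0$ is a domain, and (3) that every irreducible component of $V^S$ (for every $S$) dominates $\Spec(B''_0)$. Property (1) is then automatic: localising the base ring of a product $\FIop$-scheme yields again a product $\FIop$-scheme, with fiber $V = Z[1/h'] = X''([1])$.

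For property (2): since $K$ is Noetherian and $B$ is finitely generated in width $\le 1$, the ring $B'_0 = B(S_0)[1/h]$ is a finitely generated $K$-algebra, hence Noetherian, hence has finitely many minimal primes $\mathfrak{p}_1, \dots, \mathfrak{p}_r$. The scheme $\Spec(B'_0)$ breaks up into irreducible components; I would pick one component, say $V(\mathfrak{p}_1)$, whose generic point is "as generic as possible," and localise at an element $h'$ that is zero on $\mathfrak{p}_2, \dots, \mathfrak{p}_r$ but nonzero on $\mathfrak{p}_1$ — concretely, pick $h' \in (\mathfrak{p}_2 \cap \dots \cap \mathfrak{p}_r) \setminus \mathfrak{p}_1$, using prime avoidance. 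Then $B'_0[1/h']$ has a unique minimal prime, namely $\mathfrak{p}_1 B'_0[1/h']$; since $B$ is reduced (and reducedness is preserved by shifting and localisation), $B'_0$ is reduced, so $B'_0[1/h']$ is a domain.

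For property (3): the key point is that $V^S$, as a scheme over the domain $B''_0$, decomposes into finitely many irreducible components (for each fixed $S$), and each component either dominates $\Spec(B''_0)$ or maps into a proper closed subset. Here I would use two facts. First, it suffices to handle $S = [1]$, i.e. $V = X''([1])$: because $V^S$ is a product of copies of $V$ over the domain $B''_0$ with $\Frac(B''_0)$-points forming products of $V_{\Frac(B''_0)}$-points, a component of $V^S$ fails to dominate the base exactly when it lies over a proper closed subset, and such behaviour can be traced back (via the coordinate projections $V^S \to V$) to components of $V$ not dominating the base, together with the behaviour of irreducible components of products of schemes over a domain. Second, $V$ is a finitely generated scheme over $B'_0$, hence has finitely many irreducible components; by generic flatness / the fact that a finitely generated algebra over a Noetherian domain is flat after inverting one nonzero element, or simply by noting that a component that does not dominate $\Spec(B'_0)$ maps into a proper closed subset $\Spec(B'_0/J)$ for some nonzero ideal $J$, and there are only finitely many such components each contributing a nonzero element of $B'_0$ — their product $h''$ is nonzero since $B''_0$ is a domain — I would localise once more at $h''$ to kill exactly those "non-dominant" components. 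Crucially, localising at $h''$ does not create new non-dominant components: localisation of the base only removes the fiber over $V(h'')$, so the remaining components still dominate. Folding $h'$ and $h''$ into a single localising element completes the argument.

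The main obstacle I expect is the reduction from general $S$ to $S = [1]$ in property (3): one must argue carefully that irreducible components of the product $\FIop$-scheme $S \mapsto V^S$ over the domain $B''_0$ are controlled by components of $V$ over $B''_0$. Over a field this is the statement that $(\text{irred})^S$ may fail to be irreducible but its components are still "products" in a suitable sense; over a domain one wants that a component of $V^S$ dominates $\Spec(B''_0)$ iff its image under each projection does. The clean way to see this is to pass to the generic fiber: $(V^S)_{\Frac(B''_0)} = (V_{\Frac(B''_0)})^S$, whose irreducible components are in bijection with $S$-tuples of components of $V_{\Frac(B''_0)}$ (this uses nothing more than that the product of affine schemes over a field has components indexed by tuples of components of the factors, after possibly a finite separable base extension — but we only need the components, not their irreducibility, so even this subtlety is avoidable); and a component of $V^S$ dominates the base iff it meets this generic fiber, iff the corresponding tuple of projections all dominate the base. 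So once we have (3) for $V$ itself — achieved by the single localisation at $h''$ above — it propagates to all $S$ for free. Everything else is routine bookkeeping about finitely generated algebras over Noetherian rings and commuting localisations with shifts (Lemma~\ref{lm:ShiftAndLocalise}).
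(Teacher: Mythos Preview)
Your treatment of (1) and (2) is correct and matches the paper's: localising a product $\FIop$-scheme at an element of $B'_0$ again gives a product, and inverting an $h_1$ that vanishes on all but one component of $\Spec(B'_0)$ (using reducedness of $B'_0$) makes the base a domain.

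For (3) there is a genuine gap in your reduction from arbitrary $S$ to $S=[1]$. The implication ``all components of $V$ dominate $\Spec(B''_0)\Rightarrow$ all components of $V^S$ dominate'' is false without flatness. For instance, take $B''_0=k[s,t]$ and $V=\Spec\bigl(k[s,t,x,y]/(sx-ty)\bigr)$, an integral scheme whose unique component certainly dominates the base. Then
\[
V\times_{B''_0}V=\Spec\bigl(k[s,t,x_1,y_1,x_2,y_2]/(sx_1-ty_1,\ sx_2-ty_2)\bigr)
\]
has two minimal primes: the determinantal ideal of $\begin{pmatrix} s & y_1 & y_2\\ t & x_1 & x_2\end{pmatrix}$ (the closure of the generic fibre) and the ideal $(s,t)$, whose component lies entirely over the origin of $\Spec(k[s,t])$. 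Your generic-fibre argument silently assumes that every component of $V^S$ meets the generic fibre, i.e.\ corresponds to an $S$-tuple of components of $V_{\Frac(B''_0)}$; the component $V(s,t)$ above shows this need not hold, even though $V$ itself is irreducible and dominant.

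The paper closes this gap by committing to the generic-freeness option you mention only in passing: one localises so that $B''_1$ becomes a \emph{free} $B''_0$-module. Then for every $S$ the tensor power $B''(S)\cong(B''_1)^{\otimes_{B''_0}|S|}$ is again free, hence flat, and going-down for flat extensions forces every minimal prime of $B''(S)$ to contract to $(0)$ in the domain $B''_0$. Thus (3) follows for all $S$ at once, with no reduction step needed. The repair is therefore already latent in your sketch; the point is that flatness must be arranged for $B''_1$ over $B''_0$ and then propagated to the tensor powers, rather than used merely to prune non-dominant components of $V$.
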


\begin{proof}
The $\FIop$-scheme $X'=\Spec(B')$ from the Shift Theorem maps $S$
to $Z^S$, where $Z=X'([1])$ and where the product is over $B'_0$.
By construction, $B'$ is reduced, nice, and $1 \neq 0$ in $B'_0$. Any
localisation by a nonzero $h' \in B'_0$ satisfies (1). We will now
construct $h'$ so as to satisfy (2) and (3).

As $K$ is Noetherian and $B'_0$ is a finitely generated $K$-algebra,
$B'_0$ is Noetherian. Hence $\Spec(B'_0)$ is the union of finitely many
irreducible components; let $C$ be one of them. Then there exists a
nonzero $h_1 \in B'_0$ that vanishes identically on all other irreducible
components of $\Spec(B'_0)$. Now $B'_0[1/h_1]$ is a domain, namely,
the coordinate ring of $C[1/h_1]$.

Furthermore, $B_1'[1/h_1]$ is a finitely generated $B_0'[1/h_1]$-algebra
and by generic freeness \cite[Theorem 14.4]{Eisenbud95}, there exists a
nonzero $h_2 \in B_0'[1/h_1]$ such that $B_1'[1/h_1][1/h_2]$ is a free
$B_0'[1/h_1][1/h_2]$-module. After multiplying with a power of (the image
of) $h_1$, we may assume that $h_2$ the image of some $\tilde{h}_2 \in
B_0$. Then set $h':=h_1 \tilde{h}_2$.

Set $B'':=B'[1/h']$ and $X'':=\Spec(B'')=X'[1/h']$. Now $B''_0$ is a
localisation of the domain $B'[1/h_1]$, hence a domain, so
(2) holds.

Furthermore, for every $S \in \FI$, $X''(S)$ is the product over
$B''_0=B'[1/h']$ of $|S|$ copies of $V:=X''([1])$. Its coordinate
ring $B''(S)$ is then a tensor product over $B''_0$ of $|S|$ copies of
the free $B''_0$-module $B''_1$, and hence $B''(S)$ is itself a free
$B''_0$-module. Furthermore, again since niceness is preserved, the map
$B''_0 \to B''(S)$ is injective. Then, by the going-down theorem for
flat extensions \cite[Lemma 10.11]{Eisenbud95}, every minimal prime ideal of $B''(S)$ intersects $B''_0$
in the zero ideal, so that every irreducible component of $X''(S)$
maps onto $\Spec(B''_0)$, as desired.
\end{proof}

\begin{de} \label{de:ProductType}
Let $L$ be a Noetherian domain, $Q \supseteq L$ a ring extension such that $Q$
is a finitely generated $L$-algebra and free as an $L$-module. Set
$Z:=\Spec(Q)$. Then the $\FIop$-scheme over $L$ defined by $S \mapsto
Z^S$, where the product is over $L$, is said to be {\em of product
type}. As we have seen above, each irreducible component of $Z^S$ then
maps dominantly into $\Spec L$.
\end{de}

In Section~\ref{sec:Proof} we will establish our Main Theorem for
$\FIop$-schemes of product type and then relate the general case to the
product case via the Shift Theorem.

\begin{ex} \label{ex:Kummer}
To illustrate the Shift Theorem and Proposition~\ref{prop:Shift} we
Analyse \cite[Example 3.20]{Kummer22} in the case of curves. In our
notation, let $X_d(S)$ be the reduced,
closed subvariety of $\Mat_{2,\CC}(S)$ sonsisting of all $S$-tuples
of points $(x_i,y_i)$ for which there exists a nonzero degree-$\leq d$
polynomial $p \in \CC[x,y]$ with $p(x_i,y_i)=0$ for all $i \in S$. It
is proved in \cite{Kummer22} that $X_d(S)$ is an irreducible variety for all $d \geq 1$ and
all $S$, so it is not particularly interesting from the perspective of
counting components. However, it {\em is} interesting from the perspective
of the Shift Theorem. Take $n_0:=\dim \CC[x,y]_{\leq d} - 1$, so that
through $n_0$ points $(x_i,y_i),\ i=1,\ldots,n_0$ in general
position
goes a unique plane curve $C$ of degree $\leq d$. The coefficients
of the corresponding polynomial $p$ are rational functions of the
$(x_i,y_i)$ with $i \in [n_0]$. Take for $h$ a common multiple of the
denominators of these rational functions, so that $C$ is a
curve over the ring
$\CC[x_1,y_1,\ldots,x_{n_0},y_{n_0}][1/h]=:B_0'$.
Then $X'=(\Sh_{[n_0]}X)[1/h]$
is the $\FIop$-variety that maps $S$ to $C^S$, where the
product is over $B_0'$.  
\end{ex}

\section{The component functor}
\label{sec:ComponentFunctor}

To establish the Main Theorem, we will analyse the functor that assigns
to a finite set $S$ the set of components of $X(S)$. This
functor takes values in another category called $\PF$.

\subsection{Contravariant functors $\FI \to \PF$}

\begin{de}
Let $\PF$ be the category whose objects are finite sets and whose
morphisms $T \to S$ are partially defined maps from $T$ to $S$, i.e.,
maps $\pi$ into $S$ whose domain $\dom(\pi)$ is a subset of $T$. If $\pi:T
\to S$ and $\sigma:S \to U$ are morphisms in this category, then $\sigma
\circ \pi$ is defined precisely at those $i \in T$ for which $i \in
\dom(\pi)$ and $\pi(i) \in \dom(\sigma)$; and $\sigma
\circ \pi$ takes the value $\sigma(\pi(i))$ there.
\end{de}

We will be interested in contravariant functors $\cF:\FI \to \PF$ and
morphisms between these.

\begin{de} \label{de:Morphism}
A {\em morphism} from a contravariant functor $\cF:\FI \to \PF$ to another
such functor $\cF'$ is a collection of everywhere defined maps
$(\Psi(S):\cF(S) \to \cF'(S))_{S \in \FI}$ such that for all $S,T \in
\FI$ and $\pi \in \Hom_\FI(S,T)$ the diagram
\[
\xymatrix{
\cF(T) \ar[r]^{\Psi(T)} \ar[d]_{\cF(\pi)} & \cF'(T) \ar[d]^{\cF'(\pi)}\\
\cF(S) \ar[r]_{\Psi(S)} & \cF'(S)
}
\]
commutes in the following sense: if the leftmost map
$\cF(\pi)$ is defined at some $f \in \cF(T)$, then the rightmost
map $\cF'(\pi)$ is defined at $\Psi(T)(f)$, and we have
$\cF'(\pi)(\Psi(T)(f))=\Psi(S)(\cF(\pi)(f))$. The morphism is called
injective/surjective if each $\Psi(S)$ is injective/surjective, and an
isomorphism if each $\Psi(S)$ is bijective and moreover $\cF'(\pi)$ is
defined {\em precisely} at all $f' \in \cF'(T)$ such that $\cF(\pi)$
is defined at $\Psi(T)^{-1}(f')$. 
\end{de}

Note that our morphisms $\cF \to \cF'$ are not precisely natural
transformations, since we do not require that the diagram above commutes
as a diagram of partially defined maps: we allow the partially defined
map $\cF'(\pi) \circ \psi(T)$ to have a larger domain than $\psi(S)
\circ \cF(\pi)$.

\subsection{The component functor of an $\FIop$-scheme}

\begin{de} \label{de:ComponentFunctor}
Let $B$ be a finitely generated $\FI$-algebra over a Noetherian ring
$K$, so that $X=\Spec(B)$ is an affine $\FIop$-scheme of finite type
over $K$.  Then we define the contravariant functor $\cC_X: \FI \to \PF$
on objects by
\[ \cC_X(S)=\{\text{the irreducible components of $X(S)$}\} \]
and on morphisms $\pi \in \Hom_{\FI}(S,T)$ as follows: $\cC_X(\pi)$
is defined at some component $C \in \cC_X(T)$ if (and only if) $X(\pi):X(T) \to X(S)$
maps $C$ dominantly into a component of $X(S)$. The functor $\cC_X$ is
called the {\em component functor} of $X$. 
\end{de}

Note that the condition that $K$ is Noetherian and $B$ is finitely
generated implies that, indeed, $\cC_X(S)$ is a finite set for each $S$.

\begin{ex}
In Example~\ref{ex:Planes}, $\cC_X$ is isomorphic to the functor $\FI \to
\PF$ that assigns to the set $S$ the set $\binom{S}{2}$ of two-element
subsets and to $\pi:S \to T$ the partially defined map $\binom{T}{2}
\to \binom{S}{2}$ that sends $\{i,j\}$ to $\{\pi^{-1}(i),\pi^{-1}(j)\}$
whenever this is defined. 
\end{ex}

In the definition of the component functor we have not assumed that $B$
is gene\-rated in width $\leq 1$, and indeed larger $\FI$-algebras
also yield
interesting examples.

\begin{ex} \label{ex:Cycles}
Let $K$ be a field and let $R$ be the $\FI$-algebra that assigns to $S$
the ring $R(S)=K[x_{i,j} \mid i,j \in S]/(\{x_{i,j}-x_{j,i} \mid i,j
\in S\})$ and to a morphism $\pi \in
\Hom_\FI(S,T)$ the $K$-algebra homomorphism determined by $x_{i,j}
\mapsto x_{\pi(i),\pi(j)}$. This $\FI$-algebra is generated in width
$2$ by the two elements $x_{1,1},x_{1,2} \in R([2])$.

It is well known that this $\FI$-algebra
is {\em not} Noetherian; the following example is closely related
to \cite[Example 3.8]{Hillar09}. Let $I_d(S)$ be the ideal generated
by all {\em cycle monomials} of the form $x_{i_1,i_2} x_{i_2,i_3}
\cdots x_{i_k,i_1}$ where $3 \leq k \leq d$ and $i_1,\ldots,i_k$ are
distinct. Then $I_3 \subsetneq I_4 \subsetneq \ldots$ is an infinite
strictly increasing chain of ideals in $R$. Let $I_\infty$ be
their union, and let $X=\Spec(R/I_\infty)$. A prime ideal $P$ in $R(S)$
containing $I_\infty(S)$ contains at least one variable from every
cycle of length at least $3$, so the edges $\{i,j\}$ corresponding to
variables $x_{i,j}$ with $i\neq j$ that are {\em not} in $P$ form a
forest with vertex set $S$. Every forest is contained in a tree with
vertex set $S$. Correspondingly, every such tree $T$ gives rise to a
minimal prime ideal containing $I_\infty(S)$, namely the ideal generated by all
$x_{i,j}$ with $\{i,j\}$ not an edge in $T$.

It follows that the minimal prime ideals
of $R(S)/I_\infty(S)$ are in bijection to the trees
with vertex set $S$. Recall that, by Cayley's
formula, this number of trees is $n^{n-2}$ when $n:=|S| \geq
2$. In particular, the number of $\Sym([n])$-orbits is at least $(n^{n-2})/n!$
and hence superpolynomial in $n$; this shows that in the Main Theorem
the width-one condition cannot be dropped.

Furthermore, given a $\pi \in \Hom_\FI(S,T)$, $\cC_X(\pi)$ is defined
on trees $\Delta$ with vertex set $T$ as follows. If the induced
subgraph of $\Delta$ on $\pi(S)$ is connected (and hence a tree), then
$\cC_X(\pi)(\Delta)$ is that tree but with the label $j \in \pi(S)$
replaced by $\pi^{-1}(j)$. Otherwise, $\cC_X(\pi)$ is not defined at
$\Delta$. 
\end{ex}

\subsection{The underlying species}

In our proof of the Main Theorem, we will give a fairly complete
picture of the component functor of width-one $\FIop$-schemes, at least
on sets $S \in \FI$ with $|S| \gg 0$. The first observation is that
for {\em any} contravariant functor $\cF:\FI \to \PF$ and any $\pi \in
\End_{\FI}(S)=\Sym(S)$, $\cF(\pi)$ is defined everywhere on $\cF(S)$,
and a bijection there. After all, by the properties of a contravariant
functor $\id_{\cF(S)} = \cF(\pi \circ \pi^{-1})= \cF(\pi^{-1}) \circ
\cF(\pi)$. It follows that the functor from the category of finite
sets with bijections to itself that sends $S$ to $\cF(S)$ and $\pi$
to $\cF(\pi)^{-1}$ is a covariant functor and hence a {\em species} in
the sense of \cite{Joyal81}; we call this the {\em underlying species} of the
$\cF$. For the Main Theorem it would suffice to know
the underlying species of the component functor $\cC_X$ of $X$. However,
to understand this species, we will also need to have some information on
the partially defined maps $\cC_X(\pi)$ where $\pi:S \to T$ is {\em not}
a bijection.

\subsection{A property in width one}

The second observation on component functors concerns width-one $\FIop$-schemes.

\begin{lm}
Suppose that $X$ is a width-one affine $\FIop$-scheme of finite type over
a Noetherian ring $K$. Then there exists an $n_0$ such that for all $S,T
\in \FI$ with $n_0 \leq |S|\leq |T|$ and all $\pi \in \Hom_\FI(S,T)$,
the partially defined map $\cC_X(\pi)$ is {\em surjective}.
\end{lm}

\begin{proof}
Take the $n_0$ from Proposition~\ref{prop:Nicefy}, so that $X(\pi):X(T)
\to X(S)$ is dominant for all $S,T \in \FI$ with $n_0 \leq |S| \leq
T$. Then for each component of $X(S)$ there must be some component of
$X(T)$ mapping dominantly into it.
\end{proof}

\section{Proof of the Main Theorem} \label{sec:Proof}

In this section, which takes up the remainder of the paper, we establish the Main Theorem. To
do so, on the one hand we develop purely combinatorial tools (see
\S\S\ref{ssec:Elementary},\ref{ssec:FirstQuasi},\ref{ssec:OrbitCounting},\ref{ssec:ModelFunctors},\ref{ssec:SecondQuasi},\ref{ssec:PreComponent},\ref{ssec:FinalQuasi},)
and on the other hand we establish algebraic
results relating the component functors of
width-one $\FIop$-schemes to those combinatorial tools (see
\S\S\ref{ssec:CompWideSpace},\ref{ssec:FiniteOverWide},\ref{ssec:PreCompFIopScheme}).
Finally, all is combined in \S\ref{ssec:Proof} to establish the Main
Theorem. We would like to highlight \S\ref{ssec:SecondQuasi}, where
from a so-called model functor we extract certain groupoids
acting on unions of
rational cones, after which we use an orbit-counting lemma for groupoids
from \S\ref{ssec:OrbitCounting} to establish quasipolynomiality in that
crucial case.

\subsection{The component functor of a wide-matrix space}
\label{ssec:CompWideSpace}

Let $L$ be a finitely generated $K$-algebra, where $K$ is a Noetherian
ring, and $c\in \ZZ_{\geq 0}$. Then for each $S \in \FI$ we have a
natural morphism $\Mat_{c,L}(S) \to \Spec(L)$ (corresponding to the natural embedding $L \to L[\Mat_{c,L}(S)]$), and the preimages
of the irreducible components of $L$ are the irreducible components
of $\Mat_{c,L}(S)$. This establishes the following.

\begin{lm} \label{lm:WideMatrixSpace}
Let $k$ be the number of minimal prime ideals of $L$.  The component
functor of $\Mat_{c,L}$ is isomorphic to the functor that assigns
the set $[k]$ to each $S \in \FI$ and the identity on $[k]$ to each
$\pi \in \Hom_{\FI}(S,T)$. In particular, the number of $\Sym([n])$-orbits on
$\cC_{\Mat_{c,L}}([n])$ is equal to $k$. \hfill $\square$
\end{lm}

\subsection{Elementary model functors} \label{ssec:Elementary}

We construct a class of contravariant functors $\FI \to \PF$ from which,
as we will see, the component functor of a width-one $\FIop$-scheme of
finite type over a Noetherian ring is built up in a suitable sense.

\begin{de} \label{de:Elementary}
Let $k \in \ZZ_{\geq 0}$, let $G$ be a subgroup of $\Sym([k])$, and, for $S
\in \FI$, let $\cE(S)$ be a subset of $[k]^S$ that is preserved under the
diagonal action of $G$ on $[k]^S$. Assume, furthermore, that for all $\pi
\in \Hom_\FI(S,T)$ the map $[k]^T \to [k]^S,\ \alpha \mapsto \alpha
\circ \pi$ maps $\cE(T)$ into
$\cE(S)$. Then the contravariant functor $\FI \to \PF$ that sends $S$ to
$\cE(S)/G$ and $\pi \in \Hom_{\FI}(S,T)$ to the (everywhere defined) map
\[ \cE(T)/G \to \cE(S)/G,\quad G\cdot \alpha \mapsto G
\cdot (\alpha \circ \pi) \]
is called an {\em elementary model functor} $\FI \to \PF$. Note that the
latter map is well-defined as $G$ acts diagonally.
\end{de}

\subsection{A first quasipolynomial count}
\label{ssec:FirstQuasi}

\begin{prop} \label{prop:BurnsideStanley}
Let $k \in \ZZ_{\geq 0}$, $G$ a subgroup of $\Sym([k])$ and $M$ a
$G$-stable downward closed subset of $\ZZ_{\geq 0}^k$; that is, for all $\beta \in M$ and $g \in G$ we have $g\beta \in M$, and for
all $\beta \in M$ and $j \in [k]$ with $\beta_j >0$ we have $\beta - e_j \in M$, where $e_j$ is the $j$-th
standard basis vector in $\ZZ^k$. Then there exists a quasipolynomial
$f$ such that for $n \gg 0$ the number of $G$-orbits on the set $M_n$
of elements $\beta \in M$ of {\em total degree} $|\beta|:=\sum_j \beta_j$
equal to $n$ equals $f(n)$.
\end{prop}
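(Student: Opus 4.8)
The plan is to reduce the count of $G$-orbits on $M_n$ to counting lattice points in rational polyhedral cones, to which known quasipolynomiality theorems apply, and then to handle the group action via the orbit-counting (Cauchy--Frobenius) lemma. First I would observe that, by finite generation of $M$ as a downward-closed set under the coordinate shifts --- equivalently, by Dickson's lemma applied to the complement of $M$ --- the set $M$ is a finite union of ``boxes'' of the form $\{\beta \in \ZZ_{\geq 0}^k \mid \beta_j \leq m_j \text{ for } j \in J, \ \beta_j \text{ arbitrary for } j \notin J\}$ for various subsets $J \subseteq [k]$ and bounds $m_j$; more precisely, writing $N = \ZZ_{\geq 0}^k \setminus M$, the set $N$ is generated as an upward-closed set by finitely many vectors, and $M$ is the complement, which can be stratified into finitely many such pieces. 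Each such piece is the set of lattice points in a rational polyhedron (a product of a bounded box with a coordinate subspace cone), so $M$ itself is a finite union of sets of lattice points in rational polytopes/cones.

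Next I would bring in the group. By the orbit-counting lemma, $|M_n/G| = \frac{1}{|G|} \sum_{g \in G} |M_n^g|$, where $M_n^g$ is the set of $\beta \in M_n$ fixed by $g$. For fixed $g$, the fixed-point set $\{\beta \in \ZZ_{\geq 0}^k \mid g\beta = \beta\}$ is itself the set of lattice points in a rational coordinate-subspace cone (it is cut out by the linear equations $\beta_j = \beta_{g(j)}$, i.e.\ $\beta$ is constant on each cycle of $g$), and intersecting with $M$ keeps us inside a finite union of rational polyhedra, since intersections of such are again of that type. The total-degree condition $|\beta| = n$ is a single rational hyperplane slice. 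So $|M_n^g|$ is, for each $g$, the number of lattice points in the degree-$n$ slice of a finite union of rational polyhedral cones; by inclusion--exclusion over the pieces of the union (intersections of boxes-times-subspaces are again boxes-times-subspaces, hence of the same type), it suffices to treat one such cone at a time.

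For a single rational polyhedral cone $P \subseteq \RR_{\geq 0}^k$, the function $n \mapsto |\{\beta \in P \cap \ZZ^k \mid |\beta| = n\}|$ is the Ehrhart (quasi)polynomial of the rational polytope $P \cap \{|\beta| = n\}$ --- or, said differently, it is the coefficient extraction from the rational generating function $\sum_{\beta \in P \cap \ZZ^k} z^{|\beta|}$, which by standard results (e.g.\ on generating functions of lattice points in rational cones, or Ehrhart--Macdonald reciprocity / the theory of quasipolynomiality of denumerants) is a rational function whose coefficients form a quasipolynomial for $n \gg 0$. Summing finitely many quasipolynomials (over the inclusion--exclusion terms and over the pieces), then averaging over $g \in G$, again yields a quasipolynomial, which gives $f$ with $|M_n/G| = f(n)$ for all large $n$. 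The main obstacle I anticipate is purely bookkeeping: making the decomposition of the downward-closed set $M$ into finitely many rational-polyhedral pieces clean enough that intersecting with the $g$-fixed subspaces and with the degree hyperplane stays within a manageable class, so that quasipolynomiality can be invoked termwise; the number-theoretic input (quasipolynomiality of lattice-point counts in rational polytopes) is entirely standard and will be cited rather than reproved.
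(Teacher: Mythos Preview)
Your proposal is correct and follows essentially the same approach as the paper: apply the orbit-counting lemma to reduce to counting $g$-fixed points $|M_n^g|$, decompose $M$ into finitely many lattice-point sets in rational polyhedra, intersect with the $g$-fixed subspace, and invoke standard quasipolynomiality of lattice-point counts. The only cosmetic difference is that the paper uses a Stanley decomposition $M = \bigsqcup_i (\alpha_i + \ZZ_{\geq 0}^{I_i})$, which gives a disjoint union and so avoids the inclusion--exclusion bookkeeping you anticipated.
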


\begin{proof}
By the orbit-counting lemma, that number of orbits equals
\[ \frac{1}{|G|} \sum_{g \in G} |M_n^g| \]
where $M_n^g=\{\alpha \in M_n \mid g\alpha=\alpha\}$. So it suffices to
prove that each of the summands is a quasipolynomial for $n \gg 0$.

The set $M$ has a so-called {\em Stanley decomposition}
\cite{Stanley82}
\[ M=\bigsqcup_{i=1}^d (\alpha_i + \ZZ_{\geq 0}^{I_i}) \]
for suitable subsets $I_i \subseteq [k]$.  Call the $i$-th term
$N(i)$. Then, for each $g \in G$, $N(i)^g$ is the set of nonnegative
integers points in a certain rational polyhedron, and its elements of
degree $n \gg 0$ are counted by a quasipolynomial by \cite[Theorem
4.5.11 and Proposition 4.4.1]{Stanley97}.
\end{proof}

An immediate consequence is the following.

\begin{cor} \label{cor:ElementaryCount}
Let $S \mapsto \cE(S)/G \subseteq [k]^S/G$ be an elementary model
functor. Then $|(\cE([n])/G)/\Sym([n])|$ equals some quasipolynomial in
$n$, for all $n \gg 0$.
\end{cor}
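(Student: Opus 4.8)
The plan is to reduce the statement about the elementary model functor to the quasipolynomial count in Proposition~\ref{prop:BurnsideStanley}. The key observation is that the action of $\Sym([n])$ on $[k]^{[n]}$ factors through a finite combinatorial invariant: two tuples $\alpha,\alpha' \in [k]^{[n]}$ lie in the same $\Sym([n])$-orbit precisely when, for each $j \in [k]$, the fibre sizes $|\alpha^{-1}(j)|$ and $|(\alpha')^{-1}(j)|$ agree. Thus there is a natural surjection $[k]^{[n]} \to \ZZ_{\geq 0}^k$ sending $\alpha$ to its \emph{content vector} $\beta(\alpha):=(|\alpha^{-1}(1)|,\ldots,|\alpha^{-1}(k)|)$, which has total degree $n$, and this map induces a bijection $[k]^{[n]}/\Sym([n]) \xrightarrow{\sim} \{\beta \in \ZZ_{\geq 0}^k \mid |\beta|=n\}$.

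First I would check how the subgroup $G \leq \Sym([k])$ interacts with this. The diagonal $G$-action on $[k]^{[n]}$ permutes the coordinates of the content vector: $\beta(g\alpha) = g\cdot\beta(\alpha)$ where $G$ acts on $\ZZ_{\geq 0}^k$ by permuting entries. Since the $G$-action and the $\Sym([n])$-action on $[k]^{[n]}$ commute, the set $(\cE([n])/G)/\Sym([n])$ is in bijection with $G$-orbits on $\Sym([n])$-orbits of $\cE([n])$, i.e.\ with $G$-orbits on the image of $\cE([n])$ under the content map. Let $M_n \subseteq \{\beta \in \ZZ_{\geq 0}^k \mid |\beta|=n\}$ be this image; then $|(\cE([n])/G)/\Sym([n])| = |M_n/G|$.

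Next I would verify that $M:=\bigcup_{n} M_n$ is a $G$-stable, downward-closed subset of $\ZZ_{\geq 0}^k$, so that Proposition~\ref{prop:BurnsideStanley} applies. $G$-stability is immediate from the compatibility above. For downward closure, suppose $\beta \in M_n$, say $\beta = \beta(\alpha)$ with $\alpha \in \cE([n])$, and suppose $\beta_j > 0$. Pick any $i \in [n]$ with $\alpha(i)=j$ and let $\pi:[n-1] \hookrightarrow [n]$ be any injection whose image is $[n]\setminus\{i\}$. Then $\alpha\circ\pi \in \cE([n-1])$ by the functoriality assumption in Definition~\ref{de:Elementary}, and its content vector is exactly $\beta - e_j$. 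Hence $\beta - e_j \in M_{n-1} \subseteq M$, establishing downward closure. Applying Proposition~\ref{prop:BurnsideStanley} to $M$ then gives a quasipolynomial $f$ with $|M_n/G| = f(n)$ for $n \gg 0$, which is the desired count.

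The only subtlety — and the step I would be most careful about — is the bookkeeping that $(\cE([n])/G)/\Sym([n])$ really is $G$-orbits on $\Sym([n])$-orbits, rather than the other way around, and that these two quotients are canonically the same because the two group actions commute; this is a standard fact about commuting actions but worth stating cleanly. Everything else is routine: the content map is classical, $G$-stability is formal, and downward closure uses precisely the functoriality built into the definition of an elementary model functor. There is no real analytic or geometric obstacle here, since all the hard quasipolynomiality input is packaged in Proposition~\ref{prop:BurnsideStanley}.
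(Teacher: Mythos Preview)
Your proposal is correct and follows essentially the same approach as the paper: both pass from $\cE([n])$ to the count vector (your ``content vector'') in $\ZZ_{\geq 0}^k$, observe that fibres are exactly $\Sym([n])$-orbits and that the map is $G$-equivariant, verify that the image $M=\bigcup_n M_n$ is $G$-stable and downward closed using functoriality of $\cE$, and then invoke Proposition~\ref{prop:BurnsideStanley}. Your write-up is more explicit about the commuting-actions bookkeeping and the downward-closure verification, but there is no substantive difference in strategy.
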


\begin{proof}
Define a map $\cE([n]) \to \ZZ_{\geq 0}^k$ by sending the vector $\alpha$
to its {\em count vector} $\beta$, i.e., the vector in which $\beta_j$
is the number of $l \in [n]$ with $\alpha_l=j$. Note that this map is
$G$-equivariant, so the image $M_n$ is $G$-stable; and that the fibres
are precisely the $\Sym([n])$-orbits. Furthermore, the fact that $\cE$
is a model functor implies that the union $M=\bigcup_n M_n$ is downward
closed. Now apply Proposition~\ref{prop:BurnsideStanley}.
\end{proof}

\subsection{$\FIop$-schemes of product type}
\label{ssec:FiniteOverWide}

Elementary model functors are combinatorial models for the component
functor of $\FIop$-schemes of product type in the sense of
Definition~\ref{de:ProductType}, as follows.

\begin{prop} \label{prop:Elementary}
Let $L$ be a Noetherian domain and let $X$ be a width-one $\FIop$-scheme of
product type over $L$. Then the component functor $\cC_X$ is isomorphic
to an elementary model functor.
\end{prop}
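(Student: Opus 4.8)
The plan is to unwind what it means for $X$ to be of product type and then to read off the elementary model functor directly from the minimal primes of the fiber $Z=\Spec(Q)$. Write $X(S)=Z^S$ with the product taken over $L$, where $Z=\Spec(Q)$ and $Q$ is a finitely generated $L$-algebra that is free as an $L$-module. First I would analyze the components of $Z$ itself: since $L$ is a Noetherian domain and $Q$ is free (hence faithfully flat and injective) over $L$, the going-down theorem for flat extensions \cite[Lemma 10.11]{Eisenbud95} shows that every minimal prime $\mathfrak{p}_1,\dots,\mathfrak{p}_k$ of $Q$ contracts to $(0)$ in $L$, i.e.\ each irreducible component of $Z$ dominates $\Spec(L)$. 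Label these components $1,\dots,k$, so $\cC_Z([1])=[k]$.

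Next I would describe $\cC_X(S)$ for general $S$ via the tensor decomposition. Because the product is over $L$, the coordinate ring of $X(S)=Z^S$ is $Q^{\otimes_L S}$, and by genericity over the domain $L$ the minimal primes of this tensor power are governed by the minimal primes of the factors: after passing to $\Frac(L)$ (which loses no components, again by going-down/flatness, since everything dominates $\Spec L$), $Q\otimes_L\Frac(L)$ has minimal primes indexed by $[k]$, and its tensor powers over the field $\Frac(L)$ have minimal primes indexed by tuples in $[k]^S$ up to the identification coming from the nonreducedness of the residue field extensions—concretely, two tuples give the same component exactly when they lie in one orbit of the group $G\le\Sym([k])$ generated by the Galois-type symmetries permuting the components of $Z$ that have isomorphic (over a common closure) generic residue fields. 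This is precisely where the subgroup $G$ of Definition~\ref{de:Elementary} comes from; the set $\cE(S)\subseteq[k]^S$ is all of $[k]^S$ here (every tuple of components of $Z$ gives a component of $Z^S$, since $L$ is a domain and the components are geometrically nice after the localization built into product type), and $\cE(S)=[k]^S$ is trivially $G$-stable and compatible with restriction along injections.

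Then I would check the functoriality claim: for $\pi\in\Hom_\FI(S,T)$ the projection $X(\pi):Z^T\to Z^S$ forgets the coordinates outside $\pi(S)$, so it sends the component indexed by a tuple $\alpha\in[k]^T$ dominantly onto the component indexed by $\alpha\circ\pi\in[k]^S$—dominantly, and not merely into, precisely because each component of $Z$ dominates $\Spec L$, so the forgetful map on a product of such components is itself dominant. Hence $\cC_X(\pi)$ is everywhere defined and equals $G\cdot\alpha\mapsto G\cdot(\alpha\circ\pi)$, matching the elementary model functor $S\mapsto[k]^S/G$ exactly. Finally one verifies this identification is an isomorphism of functors $\FI\to\PF$ in the sense of Definition~\ref{de:Morphism}: the bijections $\cC_X(S)\to[k]^S/G$ are compatible with the partial maps on both sides, and since on the elementary-model side every $\pi$ gives an everywhere-defined map, the ``defined precisely when'' clause is automatic from the dominance observation above.

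The main obstacle is the second step: pinning down exactly which tuples in $[k]^S$ become equal as components of $Z^S$, and showing this equivalence is given by a single subgroup $G\le\Sym([k])$ acting diagonally, uniformly in $S$. This requires a careful geometric-components-of-a-tensor-product argument over the domain $L$—reducing to $\Frac(L)$, then to a separable or algebraic closure, keeping track of how the absolute Galois group of $\Frac(L)$ permutes the geometric components of each $Z_i$ and how these permutations assemble into one finite quotient $G$—together with the verification that no extra collapsing or splitting occurs for $Z^S$ beyond the diagonal $G$-action, which is where freeness of $Q$ over $L$ and the product-type hypothesis (in particular the dominance of all components established in Proposition~\ref{prop:Shift}) do the real work.
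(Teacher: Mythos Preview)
Your overall strategy---pass to $M=\Frac(L)$, then to a separable closure $\overline{M}$, and read off the elementary model functor from Galois orbits on geometric components---is the paper's approach, and your functoriality check is fine. But there is a genuine error in your setup: you take $k$ to be the number of minimal primes of $Q$, i.e.\ the number of irreducible components of $Z$ over $L$ (equivalently over $M$), and then attempt to define $G\le\Sym([k])$ via ``Galois-type symmetries permuting the components of $Z$ that have isomorphic generic residue fields''. The Galois group $\Gal(\overline{M}/M)$ fixes each component of $Z_M$ setwise---they are defined over $M$---so with your $k$ there is no nontrivial permutation action to speak of, and your model functor collapses to $S\mapsto[k]^S$. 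This already fails in Example~\ref{ex:Galois}: there $Z=\Spec(\CC(t)[x]/(x^2-t))$ is irreducible over $\CC(t)$, so your $k=1$ and your functor is constant with one element, whereas $X([n])$ actually has $2^{n-1}$ components. Relatedly, your claim that ``every tuple of components of $Z$ gives a component of $Z^S$'' is false: a product of irreducible $M$-schemes need not be irreducible when the factors are not geometrically irreducible.

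The correct choice, which the paper makes, is to let $Z_1,\dots,Z_k$ be the irreducible components of the base change $Z_{\overline{M}}$ and to let $G$ be the image of $\Gal(\overline{M}/M)$ in $\Sym([k])$ via its action on these. Over the separably closed field $\overline{M}$ each product $\prod_{i\in S}Z_{\alpha(i)}$ is irreducible, so $\cC_{X_{\overline{M}}}(S)\cong[k]^S$; and the components of $X_M(S)$ are precisely the Galois orbits on these \cite[Tag 0364]{stacks-project}, i.e.\ the diagonal $G$-orbits on $[k]^S$. Your final paragraph gestures towards this passage to the closure (``geometric components of each $Z_i$''), but it is incompatible with the $k$ you fixed earlier; you need to replace your $k$ by the number of \emph{geometric} components from the outset, after which the rest of your outline goes through as written.
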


Before we prove this result, we show that it holds in Example~\ref{ex:Galois}.

\begin{ex}
Let $X(S)$ be the closed subscheme of $\AA^S_{\CC(t)}$ defined by the
equations $x_i^2-t$ for all $i \in S$. We claim that this is of product
type. First, $X(S)=Z^S$ where $Z$ is the subscheme of $\AA^1_{\CC(t)}$
defined by $x^2-t$, and where the product is over $M:=\CC(t)$.  Second,
to determine the irreducible components of $X(S)$, we extend scalars to
a separable closure $\overline{M}$ of $M$, which in particular contains
a $\sqrt{t}$. Then $X_{\overline{M}}(S)$ is just $\{\pm \sqrt{t}\}^S$,
each point of which maps onto $\Spec(M)$. Thus $X$ is of
product type as claimed. The irreducible components
of $X(S)$ are orbits of irreducible components of $X_{\overline{M}}(S)$
under the Galois group, which acts diagonally on $\{\pm \sqrt{t}\}^S$
by swapping $\sqrt{t}$ and $-\sqrt{t}$. Thus $\cC_X$ is isomorphic to
the elementary model functor that maps $S$ to
$\{1,2\}^S/\Sym([2])$. 
\end{ex}

\begin{proof}
By assumption, $X(S)=Z^S$, where $Z$ is a fixed affine scheme over
$L$, and each irreducible component of $X(S)$ maps dominantly into
$\Spec(L)$.  Let $M$ be the fraction field of $L$, and let $X_M$ be the
base change of $X$ to $M$. Since each irreducible component of $X(S)$ maps
dominantly into $\Spec(L)$, basic properties of localisation imply that
the morphism $X_M(S) \to X(S)$ is a bijection at the level of irreducible
components. Furthermore, taking these bijections for all $S$, we obtain an
isomorphism $\cC_{X_M} \to \cC_X$ of contravariant functors $\FI \to \PF$.

Next let $\overline{M}$ be a separable closure of $M$ and let
$X_{\overline{M}}$ be the base change of $X_M$ to $\overline{M}$.
Then, for each $S \in \FI$, the morphism $X_{\overline{M}}(S)
\to X_M(S)$ induces a surjection $\cC_{X_{\overline{M}}}(S) \to
\cC_{X_M}(S)$, and the fibres are precisely the orbits of the Galois
group $\Gal(\overline{M}:M)$ on $\cC_{X_{\overline{M}}}(S)$ \cite[Tag
0364]{stacks-project}. In other words, $\cC_{X_M}(S)$ has a canonical
bijection to $\cC_{X_{\overline{M}}}(S)/\Gal(\overline{M}:M)$.
To complete the proof, we need to analyse the component functor of
$X_{\overline{M}}$.

To this end, let $Z_1,\ldots,Z_k$ be the irreducible
components of the base change $Z_{\overline{M}}$. Then
\[ X_{\overline{M}}(S)=Z_{\overline{M}}^S=\bigcup_{\alpha
\in [k]^S} \prod_{i \in S} Z_i^{\alpha_i} \]
where each product over $i \in S$ is a product of
irreducible varieties over the separably
closed field $\overline{M}$, and hence irreducible. To
construct our component functor, we just set
$\cE(S):=[k]^S$.

Finally, let $G$ be the image of $\Gal(\overline{M}:M)$ in $\Sym([k])$
through its action on the irreducible components $Z_1,\ldots,Z_k$
of $Z$. Then the (image of the) action of $\Gal(\overline{M}:M)$ on
irreducible components of $X_{\overline{M}}(S)$ corresponds precisely to
the (image of the) diagonal action of $G$ on $\cE(S)$, and
hence the orbit space $\cE(S)/G$ is in bijection with the
irreducible components of $X_M(S)$. This bijection, taken
for all $S$, is an isomorphism from the elementary model
functor given by $\cE(S)=[k]^S$ and the group $G \subseteq
\Sym([k])$.
\end{proof}

\begin{re}
Note that the elementary model functors coming from $\FIop$-schemes of
product type all have $\cE(S)=[k]^S$ rather than just
$\cE(S) \subseteq [k]^S$. The set of count vectors is therefore all of $\ZZ_{\geq
0}^k$. However, in our proof of the Main Theorem we will need to do
induction over the poset of downward closed subsets of $\ZZ_{\geq 0}^k$;
this requires the greater generality in the definition of elementary
model functors.
\end{re}

\begin{cor} \label{cor:FiniteType}
The Main Theorem holds for affine $\FIop$-schemes of product type over
some Noetherian domain.
\end{cor}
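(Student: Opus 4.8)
The plan is to combine the two previous results directly: Proposition~\ref{prop:Elementary} identifies the component functor $\cC_X$ of a width-one $\FIop$-scheme of product type with an elementary model functor $S \mapsto \cE(S)/G$, and Corollary~\ref{cor:ElementaryCount} shows that $|(\cE([n])/G)/\Sym([n])|$ agrees with a quasipolynomial in $n$ for $n \gg 0$. Since $\cC_X([n]) = \cE([n])/G$ as a $\Sym([n])$-set (the isomorphism of functors in particular intertwines the $\Sym([n])$-actions on $\cC_X([n])$ and on $\cE([n])/G$, because $\Sym([n]) = \End_\FI([n])$ and a morphism of contravariant functors $\FI \to \PF$ is equivariant for these endomorphisms), the set of $\Sym([n])$-orbits on $\cC_X([n])$ is in bijection with the set of $\Sym([n])$-orbits on $\cE([n])/G$, and the count is the desired quasipolynomial.

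Concretely, I would proceed as follows. First, apply Proposition~\ref{prop:Elementary}, which requires $L$ to be a Noetherian domain and $X$ to be a width-one $\FIop$-scheme of product type over $L$; this is exactly the hypothesis of the corollary. This yields an elementary model functor $\cE$, with parameters $k \in \ZZ_{\geq 0}$ and $G \leq \Sym([k])$, together with an isomorphism $\cC_X \cong (S \mapsto \cE(S)/G)$ of contravariant functors $\FI \to \PF$. Second, observe that such an isomorphism restricts to a $\Sym([n])$-equivariant bijection $\cC_X([n]) \to \cE([n])/G$ for every $n$: by Definition~\ref{de:Morphism}, the square involving $\pi \in \Sym([n]) = \Hom_\FI([n],[n])$ commutes, and since both functors send automorphisms of $\FI$ to everywhere-defined bijections, this commuting square is genuine equivariance. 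Third, invoke Corollary~\ref{cor:ElementaryCount} to get a quasipolynomial $f$ and a bound $n_0$ with $|(\cE([n])/G)/\Sym([n])| = f(n)$ for all $n \geq n_0$. Finally, conclude $|\cC(X([n]))/\Sym([n])| = f(n)$ for $n \geq n_0$, which is precisely the statement of the Main Theorem in this case.

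There is essentially no obstacle here: the corollary is a bookkeeping assembly of the two preceding results, and all the genuine work has already been done — the algebraic content in the passage to fraction field, separable closure, and Galois descent inside Proposition~\ref{prop:Elementary}, and the combinatorial content (orbit-counting lemma plus Stanley's theorems on lattice-point quasipolynomiality) inside Proposition~\ref{prop:BurnsideStanley} and Corollary~\ref{cor:ElementaryCount}. The only point that deserves a sentence of care is the equivariance claim in the second step, i.e. that the abstract functor isomorphism from Proposition~\ref{prop:Elementary} is compatible with the $\Sym([n])$-actions that appear in the statement of the Main Theorem; but this is immediate from the definition of a morphism of contravariant functors $\FI \to \PF$ together with the fact that $\Sym([n]) = \End_\FI([n])$. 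Hence the proof is a two-line citation.

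\begin{proof}
By Proposition~\ref{prop:Elementary}, the component functor $\cC_X$ is isomorphic, as a contravariant functor $\FI \to \PF$, to an elementary model functor $S \mapsto \cE(S)/G \subseteq [k]^S/G$ for some $k \in \ZZ_{\geq 0}$ and some subgroup $G \leq \Sym([k])$. Such an isomorphism, being a morphism of contravariant functors $\FI \to \PF$, is in particular compatible with the induced maps $\cC_X(\pi)$ and $(\cE/G)(\pi)$ for every $\pi \in \End_\FI([n]) = \Sym([n])$; since these maps are everywhere-defined bijections (inverses of the images of $\pi^{-1}$), this means that for each $n$ the bijection $\cC_X([n]) \to \cE([n])/G$ is $\Sym([n])$-equivariant. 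Consequently
\[ |\cC(X([n]))/\Sym([n])| = |(\cE([n])/G)/\Sym([n])| \]
for every $n$. By Corollary~\ref{cor:ElementaryCount} the right-hand side equals a quasipolynomial $f(n)$ for all $n \geq n_0$, for a suitable $n_0 \in \ZZ_{\geq 0}$. This proves the Main Theorem for $X$.
\end{proof}
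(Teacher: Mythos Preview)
Your proof is correct and follows exactly the same approach as the paper, which simply says ``This is an immediate corollary of Proposition~\ref{prop:Elementary} and Corollary~\ref{cor:ElementaryCount}.'' You have merely expanded the implicit equivariance step, which is indeed immediate from the definition of a morphism of contravariant functors $\FI \to \PF$.
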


\begin{proof}
This is an immediate corollary of Proposition~\ref{prop:Elementary}
and Corollary~\ref{cor:ElementaryCount}.
\end{proof}

We are now in a position to prove the following result, most of which
also follows from combining results from \cite{VanLe18} (linearity of
codimension) and \cite{Nagel17} (the form of the Hilbert function).

\begin{thm} \label{thm:DimComp}
Let $X$ be an affine width-one $\FIop$-scheme $X$ of finite type over a
Noetherian ring $K$. Assume that $X([n])$ is not the empty
scheme for any $n$. Then for $n \gg 0$ the Krull dimension of $X([n])$
is eventually equal to an affine-linear polynomial in $n$, and the number
of irreducible components $|\cC_X([n])|$ is bounded from above by $c^n$
for some constant $c \geq 1$.
\end{thm}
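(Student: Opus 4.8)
The plan is to reduce to the product-type case via the Shift Theorem and then compute both quantities directly in that case. First I would pass to $X^{\red}$ (Lemma~\ref{lm:Reduced}) and, using Proposition~\ref{prop:Nicefy}, replace $X$ by a nice scheme $X'$ agreeing with $X$ on all sets of size at least some $n_0$; since we only care about $n \gg 0$, this is harmless for both statements. After these reductions $B$ is reduced and nice, and we may assume $1 \neq 0$ in $B_0$ (otherwise $X([n])$ is empty for all $n$, contrary to hypothesis). Now apply the Shift Theorem~\ref{thm:Shift} and Proposition~\ref{prop:Shift}: there is a finite set $S_0$ and a nonzero $h \in B(S_0)$, and then a further localisation, after which $X'' := (\Sh_{S_0} X)[1/h][1/h']$ is of product type over a Noetherian domain $L = B''_0$, say $X''(S) = V^S$ with $V = \Spec Q$ and each component of $V^S$ dominating $\Spec L$. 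The key point is that localisation does not change the generic fibre, so for every $S$ the irreducible components of $X''(S)$ are in bijection with those components of $X'(\Sh_{S_0}\text{-image of } S)$ that survive inverting $h$ and $h'$; in particular $|\cC_{X''}([n-|S_0|])| \leq |\cC_{X'}([n])|$, and by a symmetric argument (components of $X'(S_0 \sqcup [m])$ that meet the locus $h h' \neq 0$) one relates the two counts up to a bounded multiplicative factor coming from the finitely many components that disappear.

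For the dimension statement, I would argue that $\dim X''(S)$ is computed from the product structure. Over the fraction field $M$ of $L$, base change to the separable closure gives $V_{\overline M}^S = \bigcup_{\alpha} \prod_i (Z_i)^{\alpha_i}$ as in the proof of Proposition~\ref{prop:Elementary}, where $Z_1,\dots,Z_k$ are the (irreducible, hence equidimensional over $\overline M$) components of $V_{\overline M}$, of dimensions $d_1 \geq \dots \geq d_k$ over $\overline M$. A component of $V^S$ corresponding to $G$-orbit of $\alpha \in [k]^S$ has dimension over $L$ equal to $\dim L + \sum_{i \in S} d_{\alpha_i}$, which is maximised by taking $\alpha_i \equiv 1$, giving $\dim X''(S) = \dim L + d_1 \cdot |S|$. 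Hence $\dim X''([m]) = d_1 m + \dim L$ is affine-linear in $m$; translating back through the shift by $|S_0|$ and noting that passing from $X''$ to $X'$ (undoing a localisation, which does not change dimension of the whole scheme once we know the generic fibre is dense) and then to $X$ preserves this for $n \gg 0$, we get that $\dim X([n])$ is eventually affine-linear in $n$.

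For the component bound, in the product-type case $|\cC_{X''}([m])| = |[k]^{[m]}/G| \leq k^m$, so $|\cC_{X''}([m])| \leq k^m$ trivially. More generally, even before reducing to product type, one can bound $|\cC_{X'}([n])|$ by the number of irreducible components of $X'([n]) \hookrightarrow Z^{[n]}$ where $Z = X'([1])$ has, say, $r$ components; since $X'([n])$ is a closed subscheme of $Z^{[n]}$, every minimal prime of $B'([n])$ contains one of the finitely many minimal primes of $B'_0[Z^{[n]}]$, and a counting argument gives $|\cC_{X'}([n])| \leq (\text{const}) \cdot c_0^{n}$ for a suitable $c_0$; combined with the bounded loss in the reductions above this yields $|\cC_X([n])| \leq c^n$ for $n \gg 0$, and absorbing the finitely many small $n$ into the constant (or enlarging $c$) gives the bound for all $n$. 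I expect the main obstacle to be the bookkeeping in the first paragraph: carefully tracking how components of $X([n])$ correspond to components of the shifted-and-localised scheme, i.e.\ showing that inverting a nonzero $h \in B(S_0)$ and restricting to the shift changes the component count only in a controlled way (some components become empty, but the number that do is bounded independently of $n$ once $n_0$ is fixed), so that quasipolynomiality/asymptotic linearity transfers back. Everything else is either the already-established Shift Theorem machinery or elementary dimension theory of products over a domain.
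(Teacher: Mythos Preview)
Your proposal has the right opening reductions (pass to reduced and nice, invoke the Shift Theorem), but it is missing the central mechanism of the argument: Noetherian induction on the proper closed $\FIop$-subscheme $Y \subsetneq X$ defined by the vanishing of $h$.

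For the dimension statement, your claim that ``undoing a localisation \ldots does not change dimension of the whole scheme once we know the generic fibre is dense'' is not justified and is in general false here. The shift-and-localise $X''=(\Sh_{S_0} X)[1/h][1/h']$ only sees the open locus where some $\Sym$-translate of $h$ is nonzero; the complement $Y(S_0 \sqcup S)$ can contain irreducible components of $X(S_0 \sqcup S)$ whose dimension grows with a \emph{larger} slope in $|S|$ than that of $X''(S)$, and nothing in your argument rules this out. The correct identity is
\[
\dim X(S_0 \sqcup S)=\max\{\dim Y(S_0 \sqcup S),\ \dim X'(S)\},
\]
and to conclude that the first term is eventually affine-linear you need the theorem for $Y$. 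This is exactly where Noetherian induction via Theorem~\ref{thm:Noetherianity} enters: $Y$ is a strictly smaller closed $\FIop$-subscheme, so by induction $\dim Y([n])$ is eventually affine-linear, and the maximum of two eventually affine-linear functions is eventually affine-linear.

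For the component bound, both of your proposed routes break. First, the assertion that the number of components killed by inverting $h$ is ``bounded independently of $n$'' is false: the number of components of $X([n])$ contained in $Y([n])$ typically grows with $n$. Second, the closed embedding $X'([n]) \hookrightarrow Z^{[n]}$ from Lemma~\ref{lm:Prod} gives no upper bound on $|\cC_{X'}([n])|$ at all, since a closed subscheme can have arbitrarily many more irreducible components than its ambient scheme (e.g.\ $N$ points inside $\AA^1$). The paper instead proves
\[
|\cC_X(S_0 \sqcup S)| \leq |\cC_Y(S_0 \sqcup S)| + (|S_0|+|S|)(|S_0|+|S|-1)\cdots(|S|+1)\,|\cC_{X'}(S)|,
\]
bounding the second term via the product-type description (where $|\cC_{X'}(S)| \leq k^{|S|}$ genuinely holds because $X'(S)$ \emph{equals} $Z^S$, not merely embeds in it) and the first term, again, by Noetherian induction on $Y$. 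Your ``bookkeeping'' worry in the last paragraph is thus not a technicality but the heart of the proof, and its resolution is this induction, not a bounded-loss argument.
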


\begin{proof}
By Lemma~\ref{lm:Reduced} we may assume that $X=\Spec(B)$ is reduced,
and by Proposition~\ref{prop:Nicefy} we may assume that $X$ is nice. Since
$X([n])$ is not the empty scheme for any $n$, we have $1 \neq 0$ in $B_0$.
By the Shift Theorem~\ref{thm:Shift} and Proposition~\ref{prop:Shift}
there exists an $S_0 \in \FI$ and a nonzero $h \in B(S_0)$ such that
$X':=(\Sh_{S_0} X)[1/h]$ is of product type; in particular, it sends $S
\to Z^S$ for some reduced scheme $Z$ of finite type over $L:=B(S_0)[1/h]$.

Let $Y$ be the closed $\FIop$-subscheme of $X$ defined by the vanishing
of $h$. For any $S \in \FI$, $X(S_0 \sqcup S)$ is the union of $Y(S_0
\sqcup S)$ and all $X(\pi) X'(S)$ where $\pi$ ranges over the finite
set $\Sym(S_0 \sqcup S)$. Therefore,
\[ \dim X(S_0 \sqcup S)=\max\{\dim(Y(S_0 \sqcup S)),\dim( X'(S))\}. \]
By Noetherian induction using Theorem~\ref{thm:Noetherianity}, we may assume that the
theorem holds for $Y$. On the other hand,
$\dim(X'(S))=\dim(L) + |S| \cdot \dim(Z)$.
We conclude that, for $n \gg 0$, $\dim(X([n]))$ is a maximum
of two affine-linear functions of $n$, hence itself an affine-linear
function of $n$. Similarly, to bound $|\cC_X(S_0 \sqcup S)|$
we claim that
\[ |\cC_X(S_0 \sqcup S)| \leq |\cC_Y(S_0 \sqcup S)| +
(|S|+|S_0|)(|S|+|S_0|-1) \cdots (|S|+1) |\cC_{X'}(S)|. \]
Indeed, if $C$ is an irreducible component of $X(S_0 \sqcup S)$, then
either $C$ is contained in $Y(S_0 \sqcup S)$ (and then a component
there) or else there exists an injection $\pi:S_0 \to S_0 \sqcup S$
such that $B(\pi)h$ is not identically zero on $C$. In the latter case,
let $\sigma \in \Sym(S_0 \sqcup S)$ be any element with $\sigma \circ
\pi = \id_{S_0}$. Then $B(\sigma)B(\pi)h=h$, and hence $C=X(\sigma)C'$
for a component $C'$ of $X(S_0 \sqcup S)$ on which $h$ is nonzero. These
components correspond bijectively to components of $X'(S)$. This explains
the second term, where the first $|S_0|$ factors count the number of
possibilities for $\pi$.

Now the first term is bounded by an exponential function of
$|S_0|+|S|$ by the
induction hypothesis, and the second term is bounded by an
exponential function by the proof of Proposition~\ref{prop:Elementary}. Hence so
is the sum.
\end{proof}

\subsection{The orbit-counting lemma for groupoids}
\label{ssec:OrbitCounting}

It turns out that in the general case of the Main Theorem, the (Galois)
group that featured in the proof of Corollary~\ref{cor:FiniteType},
is replaced by a suitable {\em groupoid}. We briefly recall
the relevant set-up.

Let $G$ be a finite groupoid, that is, a category whose class of
objects is a finite set $Q$ and in which for any $p,q \in Q$ the
set $G(p,q):=\Hom(p,q)$ is a finite set all of whose elements are
isomorphisms. Rather than homomorphisms or isomorphisms, we will call
these elements {\em arrows}.

For a groupoid to act on a finite set $X$, one first specifies an {\em
anchor map} $a:X \to Q$. For $p \in Q$, set $X_p:=a^{-1}(p)$.  Then,
an action of $G$ on $X$ consists of the data of a map $\phi(g):X_p \to
X_q$ for each homomorphism $g:p \to q$, subject to the conditions that
$\phi(\id_p)=\id_{X_p}$ and $\phi(h \circ g)=\phi(h) \circ \phi(g)$
for any two arrows $g:p \to q$ and $h:q \to r$. We often write $gx$
instead of $\phi(g)(x)$.

Write $G(p):=\bigcup_{q \in Q} G(p,q)$ for the set of arrows from
$p$. For $x \in X_p$ we have a map $G(p) \to X, g \mapsto gx$. The image
of this map is called the {\em orbit} of $x$ and denoted $G(p)x$. On
the other hand, we write $G(p,p)_x:=\{g \in G(p,p) \mid gx=x\}$,
the stabiliser of $x$ in $G(p,p)$, which is a subgroup of the group
$G(p,p)$.  The map $G(p) \to G(p)x$ yields a bijection $G(p)/G(p,p)_x
\to G(p)x$; here $G(p,p)_x$ acts freely on $G(p)$ by precomposition,
so that $|G(p)x|=|G(p)|/|G(p,p)_x|$.  Furthermore, for every element $y
\in G(p)x$ we have $|G(a(y))|=|G(p)|$ and $|G(a(y),a(y))_y|=|G(p,p)_x|$.

Finally, for $g \in G(p,p)$ we write $X_p^g$ for the set of elements
$x \in X_p$ with $gx=x$. The following is a generalisation of the
orbit-counting lemma for groups.

\begin{lm} \label{lm:Burnside}
The number of orbits of $G$ on $X$ equals
\[ \sum_{p \in Q} \frac{1}{|G(p)|} \sum_{g \in G(p,p)}
|X_p^g|. \hfill
\]
\end{lm}

\begin{proof}
We count the triples $(p,g,x)$ with $p \in Q$ and $x \in X_p$ and $g
\in G(p,p)$ with $gx=x$ and $|G(p)|=N$ in two
different ways. If we first fix $x$, then we are forced to take $p:=a(x)$,
and we obtain
\begin{align*} \sum_{x \in X:|G(a(x))|=N} |G(a(x),a(x))_x| &= \sum_{x \in
X:|G(a(x))|=N} |G(a(x))|/|G(a(x))x|\\ &= \sum_{x \in
X:|G(a(x))|=N} N/|G(a(x))x|. \end{align*}
This is $N$ times the number of orbits of $G$ on the set of
$x$ with $|G(a(x))|=N$.

On the other hand, if we first fix $p$ with $|G(p)|=N$
and $g \in G(p,p)$,
then we find
\[ \sum_{p \in Q: |G(p)|=N} \sum_{g \in G(p,p)} |X_p^g|. \]
Hence the number of orbits of $G$ on the set of $x$ with
$|G(a(x))|=N$ equals
\[ \frac{1}{N} \sum_{p \in Q: |G(p)|=N} \sum_{g \in G(p,p)} |X_p^g|. \]
Now sum over all possible values of $N$ to obtain the formula
in the lemma.
\end{proof}

\subsection{Model functors} \label{ssec:ModelFunctors}

Elementary model functors are special cases of a more general class of
functors $\FI \to \PF$, which we call {\em model functors}.
Their construction is motivated by Theorem~\ref{thm:Shift} and
Proposition~\ref{prop:Elementary}, as we will see below.

\begin{figure}
\begin{center}
\includegraphics{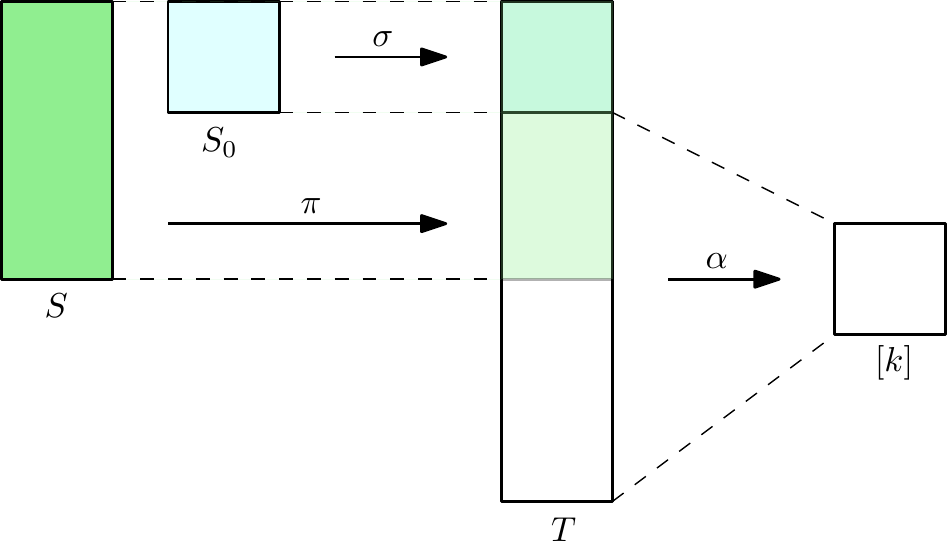}
\caption{The construction of $\cF(\pi)$ in Definition~\ref{de:Model}.}
\label{fig:diag}
\end{center}
\end{figure}

Fix $S_0 \in \FI$, a $k \in \ZZ_{\geq 0}$, and a subfunctor $\cE:\FI
\to \PF$ of the functor $S \mapsto [k]^S$, where we require that for
all $\pi \in \Hom_\FI(S,T)$, $\cE(\pi)$ is defined everywhere. Hence,
by passing to count vectors as in \S\ref{ssec:FirstQuasi}, $\cE$ is
uniquely determined by a downward closed subset of $\ZZ_{\geq 0}^k$.

Then we define a new functor $\cF: \FI \to \PF$ on objects by
\[ \cF(S)=\{(\sigma,\alpha) \mid \sigma \in
\Hom_{\FI}(S_0,S),\ \alpha \in
\cE(S \setminus \im(\sigma))\} \]
and on a morphism $\pi \in \Hom_{\FI}(S,T)$ as follows: for
$(\sigma,\alpha) \in \cF(T)$
we set
\[ \cF(\pi)((\sigma,\alpha)):=
\begin{cases}
\text{undefined if $\im(\sigma) \not \subseteq \im(\pi)$; and }\\
(\sigma', \alpha \circ
\pi|_{S \setminus \im(\sigma')}) \text{ where }
\sigma':=\pi^{-1} \circ \sigma
\text{ otherwise.}
\end{cases}
\]
Figure~\ref{fig:diag} depicts all relevant maps.
At the level of species (so remembering the maps $\cF(\pi)$ only
when $\pi$ is a bijection), this is an instance of a well-known
construction: $\cF$ is the product of the species that maps $S$
to its set of bijections $S_0 \to S$ and the species that maps $S$
to $\cE(S)$.

Next let $\sim_S$ be an equivalence relation on $\cF(S)$ for each $S$,
and assume that these relations satisfy the following three axioms:
\begin{description}
\item[Axiom (1)] if $(\sigma,\alpha) \sim_T (\sigma',\alpha')$ and $\pi \in
\Hom_{\FI}(S,T)$ has $\im(\pi) \supseteq \im(\sigma) \cup \im(\sigma')$,
then $\cF(\pi)((\sigma,\alpha)) \sim_S \cF(\pi)((\sigma',\alpha'))$;

\item[Axiom (2)] conversely, if the pairs $(\sigma,\alpha) \in \cF(T)$,
$(\sigma'',\alpha'') \in \cF(S)$, and the map 
$\pi \in \Hom_{\FI}(S,T)$ satisfy $\im(\pi) \supseteq \im(\sigma)$ and
$\cF(\pi)((\sigma,\alpha)) \sim_S (\sigma'',\alpha'')$, then there exists
a pair $(\sigma',\alpha') \in \cF(T)$
with $\im(\sigma') \subseteq \im(\pi)$ such that $(\sigma',\alpha')
\sim_T (\alpha,\sigma)$ and
$\cF(\pi)((\sigma',\alpha'))=(\sigma'',\alpha'')$; and

\item[Axiom (3)] if $(\sigma,\alpha) \sim_T (\sigma',\alpha')$ and $i,j \in T
\setminus (\im(\sigma) \cup \im(\sigma'))$, then $\alpha(i)=\alpha(j)
\Leftrightarrow \alpha'(i)=\alpha'(j)$.
\end{description}
The first axiom ensures that $\cF/\!\!\sim: S \mapsto
\cF(S)/\!\!\sim_S$ is
a functor $\FI \to \PF$ that comes with a canonical surjective
morphism $\cF \to \cF/\!\!\sim$ in the sense of Definition~\ref{de:Morphism};
in particular, this implies that $\sim_S$ is preserved under the symmetric
group $\Sym(S)$ acting on $\cF(S)$.  The second and third axioms will
be crucial in \S\ref{ssec:SecondQuasi}.

\begin{de} \label{de:Model}
A functor of the form $S \mapsto \cF(S)/\!\!\sim_S$ as constructed above is
called a {\em model functor}. 
\end{de}

\begin{re}
Each elementary model functor $S \mapsto \cE(S)/G$ is isomorphic to
a model functor with $S_0=\emptyset$ (so that we may leave out the
$\sigma$s from the pairs) and $\alpha \sim_S \alpha'$ if and only if
$\alpha' \in G \alpha$. We will see that, conversely, a model functor
gives rise to to certain groupoids that play the role of
$G$.
\end{re}

We revisit Example~\ref{ex:cube} from the perspective of
model functors.

\begin{ex} \label{ex:Model}
Let $X(S)$ be the subscheme of $\AA^S_{\CC}$ defined by the equations
$x_j^d - x_l^d$ for all $j,l \in S$. Set $\zeta:=e^{2\pi i/d}$. If we fix a
$j_0 \in S$, there is a bijection between irreducible components of $X(S)$
and elements of $(\ZZ/d\ZZ)^{S \setminus \{j_0\}}$, where the component
corresponding to $\alpha \in (\ZZ/d\ZZ)^{S \setminus \{j_0\}}$ is that
in which each $x_j, j \neq j_0$, equals $\zeta^{\alpha_j}
x_{j_0}$. By identifying $\ZZ/d\ZZ$ with $[d]$ in
the natural manner and regarding $j_0$ as the image of a $\sigma \in
\Hom_\FI(S_0,S)$ where $S_0$ is a singleton, we obtain a surjection from the
functor
\[
\cF:S \mapsto \{(\sigma,\alpha) \mid \sigma \in
\Hom_{\FI}(S_0,S), \alpha \in [d]^{S \setminus
\im(\sigma)}\}
\]
to the component functor $\cC_X$. A pair $(\sigma,\alpha)$
is mapped to the same component as a pair $(\sigma',\alpha')$
if and only if either $(\sigma,\alpha)=(\sigma',\alpha')$ or else $\sigma,\sigma'$
have distinct images $j_0,j_0'$ and for all $j \in S \setminus
\{j_0,j_0'\}$ we have $\alpha'(j)-\alpha'(j_0)=\alpha(j)$ and
$\alpha(j)-\alpha(j'_0)=\alpha'(j)$ (both in $\ZZ/d\ZZ$). This defines an equivalence relation
$\sim_S$ satisfying Axioms (1)--(3), and $\cC_X$ is isomorphic to the
model functor $S \mapsto \cF(S)/\!\!\sim_S$. 
\end{ex}

\begin{re}
Informally, we think of $(\sigma,\alpha)$ as a word in $[k]^S$ in which
the letters corresponding to $\im(\sigma) \subseteq S$ are concealed. If
$(\sigma', \alpha') \sim_S (\sigma, \alpha)$, then $(\sigma',\alpha')$
corresponds to a different word in which the letters corresponding to
$\im(\sigma')$ are concealed. Outside of $\im(\sigma) \cup \im(\sigma')$,
by Axiom (3), the two words are equal up to some permutation of
$[k]$. Axioms $(1)$ and $(2)$ simply ask that these equivalent words
behave well with respect to $\FI$-morphisms. In the next section, we
will roughly speaking attempt to ``discover'' the information concealed
in $\im(\sigma)$ by looking at equivalent pairs $(\sigma',\alpha')$
where $\im(\sigma')$ does not contain $\im(\sigma)$.
\end{re}

\begin{ex} \label{ex:Codes}
Fix a finite field $\FF_q$ and a natural number $m$. Let $S$ be a
finite set. On the set of rank-$m$ matrices in $\FF_q^{m \times S}$
act four groups:
\begin{enumerate}
\item $\GL_m(\FF_q)$ by row operations;
\item $\Sym(S)$ by permuting columns;
\item $(\FF_q^*)^S$ by scaling columns; and
\item $\Aut(\FF_q)$ by acting on all coordinates.
\end{enumerate}

Modding out only $\GL_m(\FF_q)$, we obtain the set of $m$-dimensional
codes in $\FF_q^S$, and two codes are called isomorphic if they are in
the same orbit under $\Sym(S) \ltimes G(S)$, where $G(S)=\Aut(\FF_q)
\ltimes (\FF_q^*)^S$.

Let $\cN$ be a functor $\FI \to \PF$ that assigns
to $S$ a set of $m$-dimensional codes in $\FF_q^S$ and to an injective
map $\pi:S \to T$ the map that sends a code $C \in \cN(T)$ to the code
\[ \{v \circ \pi \mid v \in C\} \subseteq \FF_q^S, \]
provided that this linear space still has dimension $m$. In particular,
we require that the code above is then an element of $\cN(S)$. This
implies that $\cN(S)$ is preserved under $\Sym(S)$ and that $\cN$
is closed under puncturing. Furthermore, we assume that $\cN(S)$ is
preserved under $G(S)$. To prove Theorem~\ref{thm:Codes}, we
need to
count the orbits of $\Sym(S) \ltimes G(S)$ on $\cN(S)$; this
is the same as the number of orbits of $\Sym(S)$ on
$\cM(S):=\cN(S)/G(S)$. We will informally call the elements
of $\cM(S)$ codes, as well. This $\cM$ is another functor $\FI
\to \PF$, and we claim that it is isomorphic to a model
functor.

To see this, let $k:=|\FF_q^m/\FF_q^*|=1+(q^m-1)/(q-1)$ and fix any
bijection between $[k]$ and $\FF_q^m/\FF_q^*$ by which we identify
these two sets.  Also set $S_0:=[m]$. Now define
\begin{align*} \cE(S):=\{&\alpha \in
((\FF_q)^m/\FF_q^*)^S \mid \\ &\text{the row space of the matrix $(I|\alpha)
\in (\FF_q)^{[m] \times (S_0 \sqcup S)}$ is in $\cN(S_0 \sqcup
S)$}\}, \end{align*}
where $(I|\alpha)$ is the $[m] \times (S_0 \sqcup S)$-matrix which in
the $[m] \times S_0$-block has the identity matrix $I$ and in the $[m]
\times S$-block has the matrix $\alpha$. Furthermore,
define $\cF(S)$ as the set of pairs $(\sigma,\alpha)$ with $\sigma \in
\Hom_{\FI}(S_0,S)$ and $\alpha \in \cE(S \setminus
\im(\sigma))$.  We have
a surjective morphism $\Psi:\cF \to \cM$ in the sense of
Definition~\ref{de:Morphism} that sends $(\sigma,\alpha)$
to the orbit under $G(S)$ of the row space of $(I|\alpha)
\in \FF_q^{m \times S}$, where now $I
\in \FF_q^{[m] \times \im(\sigma)}$ is the permutation matrix with entries
$\delta_{i,\sigma^{-1}(j)}$.

We define $\sim_S$ on $\cF(S)$ by $(\sigma,\alpha) \sim (\sigma',\alpha')$
if and only if $\Psi((\sigma,\alpha))=\Psi((\sigma',\alpha'))$. We
need to show that this satisfies Axioms (1),(2), and (3). Axiom
(1) follows from the fact that if $(\sigma,\alpha),(\sigma',\alpha')$
represent the same ($G(T)$-orbit of) code(s), then the same is true
after puncturing this code in a coordinate outside $\im(\sigma) \cup
\im(\sigma')$.  To see Axiom (2), we puncture a code
$C$ represented by $(\sigma,\alpha)$ in a coordinate $i \in T \setminus
\im(\sigma)$, and assume that the resulting code $C'$,
represented by $(\sigma,\alpha|_{T \setminus \{i\}})$, is
also represented by some other pair
$(\sigma'',\alpha'') \in \cF(T \setminus \{i\})$. Then the projection
of $C'$ to $\FF_q^{\im(\sigma'')}$ is surjective, and hence the same
holds for $C$, so that $C$ is also represented by a pair of the form
$(\sigma'',\alpha')$. Finally, Axiom (3) says that if in the generator
matrix $(I|\alpha)$ for the code $C$ represented by $(\sigma,\alpha)$
the columns labelled $i,j \in T \setminus \im(\sigma)$ are parallel, then
the same holds for any other pair $(\sigma',\alpha')$ that represents $C$
and satisfies $i,j \in T \setminus \im(\sigma')$---the main
point here is that parallelness is preserved under field automorphisms.
\end{ex}

\subsection{A second quasi-polynomial count} \label{ssec:SecondQuasi}

We use the notation from \S\ref{ssec:ModelFunctors}. So $S \mapsto
\cF(S)/\!\!\sim_S$ is a model functor, where $\cF(S)$ is the set of pairs
$(\sigma,\alpha)$ with $\sigma \in \Hom_\FI(S_0,S)$ and $\alpha \in \cE(S
\setminus S_0) \subseteq [k]^{S \setminus S_0}$, and where $\sim_S$ is
an equivalence relation on $\cF(S)$ that satisfies Axioms (1),(2),(3)
for model functors. The following theorem and its proof are
elementary, but quite subtle---indeed, this is probably the
most intricate part of the paper.

\begin{thm} \label{thm:ModelFunctor}
Let $S \mapsto \cF(S)/\!\!\sim_S$ be a model functor $\FI \to
\PF$. Then there exists a quasipolynomial $f$ such that the number of
$\Sym([n])$-orbits on $\cF([n])/\!\!\sim_{[n]}$ equals $f(n)$ for all
$n \gg 0$.
\end{thm}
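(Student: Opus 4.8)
The plan is to reduce the orbit count to a count of integer points in a finite union of rational polyhedral cones, on which a certain finite groupoid acts, and then invoke Lemma~\ref{lm:Burnside} together with quasipolynomiality of lattice-point counts. First I would pass to count vectors: a $\Sym([n])$-orbit on $\cF([n])$ is almost determined by the pair consisting of (i) the value $\alpha(\sigma(s))$ would-be, which is \emph{concealed}, so really just by the $G$-nothing, and (ii) the count vector $\beta \in \ZZ_{\geq 0}^k$ of the $\alpha$-part together with the tuple of ``hidden colours'' attached to the $|S_0|$ points in $\im(\sigma)$. More precisely, fix $n_0 := |S_0|$; a $\Sym([n])$-orbit on $\cF([n])$ is the same as a $\Sym([n] \setminus [n_0])$-orbit on $\{[n_0] \hookrightarrow [n]\} \times \cE([n]\setminus[n_0])$ after choosing the standard $\sigma$, and such orbits are parametrised by count vectors $\beta \in M_{n-n_0}$, where $M = \bigcup_m M_m \subseteq \ZZ_{\geq 0}^k$ is the downward-closed set determined by $\cE$. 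So $\cF([n])/\Sym([n])$ is in bijection with $M_{n-n_0}$, and we must count these modulo the equivalence relation induced by $\sim_{[n]}$.

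The heart of the argument is to understand the relation on $\bigsqcup_n M_{n-n_0}$ induced by $\sim$. By Axiom~(3), if $(\sigma,\alpha)\sim_{[n]}(\sigma',\alpha')$ then outside $\im(\sigma)\cup\im(\sigma')$ the partition of coordinates into fibres of $\alpha$ agrees with that of $\alpha'$; since the two $\im$s together have size at most $2n_0$, the count vector $\beta'$ of $\alpha'$ differs from $\beta$ only by a permutation $g \in \Sym([k])$ and by an adjustment supported on at most $2n_0$ coordinates. This is where the groupoid appears. I would take the object set $Q$ to be the (finite!) set of possible ``boundary data'' — roughly, the isomorphism type of $(\sigma,\alpha)$ restricted to a bounded neighbourhood of $\im(\sigma)$, i.e. the values of $\alpha$ on $\im(\sigma)$ plus finitely much more — and the arrows $p \to q$ to record the witnessed equivalences $(\sigma,\alpha)\sim(\sigma',\alpha')$ with boundary types $p,q$. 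Axiom~(1) gives composability and compatibility with shrinking $n$, Axiom~(2) gives the surjectivity needed for the arrows to be genuine isomorphisms of the relevant sliced data, and Axiom~(3) bounds everything so that $Q$ and each hom-set is finite and \emph{stabilises} for $n \gg 0$. The upshot is: for $n$ large, $M_{n-n_0}$ decomposes according to boundary type $p \in Q$ into pieces $M_{n-n_0,p}$, the groupoid $G$ acts (via the $\sim$-induced arrows) compatibly across these pieces, and the quotient we want is $M_{n-n_0}/\sim \;=\; \left(\bigsqcup_p M_{n-n_0,p}\right)/G$.

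Now apply Lemma~\ref{lm:Burnside}: the number of orbits is $\sum_{p \in Q}\frac{1}{|G(p)|}\sum_{g \in G(p,p)} |M_{n-n_0,p}^g|$. For fixed $p$ and fixed $g \in G(p,p)$, I claim $M_{m,p}^g$ is, for $m \gg 0$, the set of degree-$m$ integer points (with the degree being a fixed linear functional, since $n_0$ of the coordinates are ``frozen'' by the boundary data and the rest sum to $m$ minus a constant) in a fixed rational polyhedron: it is cut out by the Stanley-decomposition description of $M$ (each term $\alpha_i + \ZZ_{\geq 0}^{I_i}$, cf. Proposition~\ref{prop:BurnsideStanley}), intersected with the linear conditions ``boundary type equals $p$'' and ``$g$ fixes the count vector'', the latter being exactly the fixed subspace of a permutation-plus-bounded-shift action, hence again rational-polyhedral. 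By \cite[Theorem 4.5.11 and Proposition 4.4.1]{Stanley97}, each such count is a quasipolynomial in $m$, hence in $n$, for $n \gg 0$; a finite sum of quasipolynomials is a quasipolynomial, so we are done.

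The main obstacle I expect is the precise construction of the groupoid and the verification that its object set and hom-sets are \emph{finite} and \emph{eventually constant} in $n$ — i.e. isolating exactly which bounded amount of ``boundary data'' near $\im(\sigma)$ must be remembered so that Axioms~(1) and~(2) make the $\sim$-induced relation literally an action (associativity, identities, inverses) rather than just a relation. Axiom~(3) is what makes the relevant data bounded, but threading it so that the piece-wise decomposition $M_m = \bigsqcup_p M_{m,p}$ is both $G$-stable and polyhedral in each piece will require care. Once that bookkeeping is set up correctly, the passage to lattice points and the application of Lemma~\ref{lm:Burnside} and Stanley's theorems is routine.
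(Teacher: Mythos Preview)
Your global strategy---pass to count vectors, extract a finite groupoid from the equivalence relation, apply Lemma~\ref{lm:Burnside} and Stanley's theorems---is exactly the paper's. You have also correctly located the sole real difficulty: building the groupoid so that its object set and hom-sets are finite and independent of $n$.

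However, the step you flag as ``the main obstacle'' is a genuine gap, not just bookkeeping, and your sketch does not contain the idea that resolves it. The problem is that Axiom~(3) only gives you a bijection between the \emph{fibres} of $\alpha$ and $\alpha'$ outside $\im(\sigma)\cup\im(\sigma')$; it does \emph{not} give you a permutation of $[k]$. A colour that appears in $\alpha$ only at positions inside $\im(\sigma')$ is invisible to this bijection, so ``$\beta'$ differs from $\beta$ by a permutation $g\in\Sym([k])$ plus a bounded shift'' is false in general, and your proposed boundary data (``isomorphism type near $\im(\sigma)$'') is not well-defined. Concretely, there is no reason a bounded neighbourhood of $\im(\sigma)$ should determine the $\sim$-class, and nothing in your argument forces the set $Q$ to be finite.

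The paper fixes this by a Dickson induction on the downward-closed set $M\subseteq\ZZ_{\geq 0}^k$. One chooses a maximal $I\subseteq[k]$ with $v+\ZZ_{\geq 0}^I\subseteq M$ for some $v$, and takes the entries $v(l)$, $l\in I$, very large. The pairs whose count vector does \emph{not} dominate such a $v$ (after passing to an equivalent pair) form a sub-model functor $\cF'$ with strictly smaller $M'$, handled by induction. For the remaining pairs in $\tilde{\cF}=\cF\setminus\cF'$, every equivalent $(\sigma',\alpha')$ has a well-defined \emph{frequent set} $J'\subseteq[k]$ of size $|I|$ on which the Axiom~(3) bijection is total; this is what makes the arrows $g:J\to J'$ of the groupoid honest bijections. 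One then defines an auxiliary equivalence $\equiv$ on $T$ (positions that some equivalent pair sees as equal and frequent) and shows, via a transposition lemma, that the set of $\equiv$-singletons---the \emph{core}---has size at most $|S_0|+d$ for a fixed $d$. The groupoid objects are then the finitely many possible quadruples $(J,\sigma_0,\sigma_1,\overline{\alpha})$ recording the frequent set and the restriction of $(\sigma,\alpha)$ to the core; the arrows are the bijections $g$; and Proposition~\ref{prop:MatchSources} is the (delicate) verification that composition is well-defined. Without the reduction to the frequent regime none of this goes through, and your proposal contains no substitute for it.
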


This immediately implies Theorem~\ref{thm:Codes}.

\begin{proof}[Proof of Theorem~\ref{thm:Codes}]
By Example~\ref{ex:Codes}, the number of length-$n$ elements
in $\cC$ is the number of $\Sym([n])$-orbits on
$\cF([n])/\!\!\sim_{[n]}$ for a suitable model functor. The
result now follows from Theorem~\ref{thm:ModelFunctor}.
\end{proof}

To prove Theorem~\ref{thm:ModelFunctor}, we introduce the notion of
sub-model functor.

\begin{de}
Suppose that we have, for each $S \in \FI$, a subset $\cE'(S) \subseteq
\cE(S)$ such that, first, for all $\pi \in \Hom_{\FI(S,T)}$ the pull-back
map $[k]^T \to [k]^S$ that maps $\cE(T)$ into $\cE(S)$ also maps
$\cE'(T)$ into $\cE'(S)$; and second, for all $(\sigma,\alpha) \sim_S
(\sigma',\alpha') \in \cF(S)$ with $\alpha \in \cE'(S
\setminus \im(\sigma))$, we also have $\alpha'
\in \cE'(S \setminus \im(\sigma'))$. Then $S \mapsto \cF'(S)/\!\!\sim_S$, where
\[ \cF'(S):=\{(\sigma,\alpha) \in \cF(S) \mid \alpha \in
\cE'(S)\} \]
and where $\sim_S$ stands for the restriction of $\sim_S$ to $\cF'(S)$
is a model functor called a {\em sub-model functor} of
$\cF$.
\end{de}

\begin{proof}[Proof of Theorem~\ref{thm:ModelFunctor}]
The proof of this theorem will take up the remainder of this
subsection. This will involve an induction hypothesis for a
sub-model functor $\cF'$ of $\cF$ and the construction of
a certain groupoid for the complement $\cF \setminus \cF'$.

Let $M \subseteq \ZZ_{\geq 0}^k$ be the downward-closed set consisting
of all the count vectors of elements in $\cE(S)$ for $S$ running over
$\FI$. Since, by Dickson's lemma, the set of downward-closed sets in
$\ZZ_{\geq 0}^k$ satisfies the descending chain property, we may assume
that the theorem holds for all model functors whose corresponding downward
set is strictly contained in $M$.

Our goal is now to find $\cE'(S) \subseteq \cE(S)$ that defines a
sub-model functor $\cF'$ of $\cF$ such that the downward
closed set $M'$ of $\cE'$ is strictly contained in $M$.
Then we have
\[ |\cF(S)/\!\!\sim_S| = |\cF'(S)/\!\!\sim_S| \  + \  |(\cF(S)
\setminus \cF'(S))/\!\!\sim_S|
\]
and this equality continues to hold if we mod out the action of $\Sym(S)$
on the three sets in question. Hence by the induction hypothesis
we are done if we can show that the number
of $\Sym(S)$-orbits on $(\cF(S) \setminus \cF'(S))/\!\!\sim_S$ grows
quasipolynomially in $|S|$ for $|S| \gg 0$. In this induction argument,
we may of course assume that $M$ is not empty---otherwise, the
quasipolynomial $0$ will do.

To construct $\cE'$ we proceed as follows. Let
\[ A:=\{I \subseteq [k] \mid \exists v \in M: v+\ZZ_{\geq 0}^I
\subseteq M\}; \]
here, and in the rest of the paper, we identify $\ZZ_{\geq 0}^I$ with $\ZZ_{\geq 0}^I \times \{0\}^{[k]
\setminus I} \subseteq \ZZ_{\geq 0}^k$.
Note that $A$ is nonempty because $M$ is. Let $I$ be an inclusion-wise
maximal element of $A$ and set
\[ d:=\max\left\{\sum_{l \in [k] \setminus I} v(l) \mid v +
\ZZ_{\geq 0}^I \subseteq M\right\}. \]
This is well-defined, since if the sum of the entries in such $v$
at positions outside $I$ were unbounded, then $I$ would be contained in a
strictly larger element of $A$. Choose $v \in M$ such that $v+\ZZ_{\geq
0}^I \subseteq M$ and $\sum_{l \in [k] \setminus I} v(l)=d$.

\begin{lm}
For $j \in \ZZ_{\geq 0}$, define $v_j:=v+ j \cdot \left(\sum_{i \in
I}e_j\right) \in M$. There exists a $j \in \ZZ_{\geq 0}$ such that
the vectors in $M$ that are componentwise $\geq v_j$ are precisely the
vectors in $v_j+\ZZ_{\geq 0}^I$; this then also holds for all larger
values of $j$.
\end{lm}

\begin{proof}
Suppose that for every $j \in \ZZ_{\geq
0}$ there is a $w_j  \in M \setminus (v_j +\ZZ_{\geq 0}^I)$ that is
componentwise $\geq v_j$. Then by Dickson's lemma the sequence $w_1|_{[k]
\setminus I},w_2|_{[k] \setminus I},w_3|_{[k]\setminus I},\ldots $ would
contain an infinite subsequence, labelled by $i_1<i_2<\ldots$, that
weakly increases componentwise. It follows that $w_{i_1} + \ZZ_{\geq
0}^I \subseteq M$ because $M$ is downward closed and the
entries of $w_{i_j}$ labelled by $I$ diverge to infinity. Furthermore,
by the choice of $w_{i_1}$,
the sum $\sum_{l \in [k] \setminus I} w_{i_1}(l)$ is strictly larger
than $d$, a contradiction.
\end{proof}

From now on, we will make the following assumption on $v$:

\begin{quote}
{\em The entries $v(l)$ are $\gg 0$ for all $l \in I$.}
\end{quote}

In particular, after replacing $v$ with the $v_j$ from the lemma,
this implies that the vectors in $M$ that are componentwise $\geq v$
are precisely the vectors in $v+\ZZ_{\geq 0}^I$. In the course of our
reasoning, we will need further assumptions on how large the $v(l)$
with $l \in I$ are---to avoid technicalities, however, we make no attempt to
specify a precise lower bound that works.

The set $I$ is now uniquely determined by $v$ as the set of all positions
$l \in [k]$ where $v(l)$ is very large. We call it the {\em frequent set}
of $v$.

Write $v=v_0 + v_1$ with $v_0 \in \ZZ_{\geq 0}^{[k] \setminus I}$ and
$v_1 \in \ZZ_{\geq 0}^I$.  We now define $\cF':\FI \to \PF$ by setting
$\cF'(S)$ to be the set of pairs $(\sigma,\alpha) \in \cF(S)$ for which
there exists {\em no} pair $(\sigma',\alpha') \sim_S (\sigma,\alpha)$
such that the count vector of $\alpha'$ is in $\tilde{v} + \ZZ_{\geq
0}^I$, where $\tilde{v}:=v+v_1=v_0 + 2 v_1$. Notice the factor $2$; the
relevance of this will become clear towards the end of the proof.

\begin{lm}
The association $S \mapsto \cF'(S)/\!\!\sim_S$ is a sub-model functor of
$\cF$.
\end{lm}

\begin{proof}
By definition, $\cF'(S)$ is a union of of $\sim_S$-equivalence classes,
and uniquely determined by a subset $\cE'(S) \subseteq \cE(S)$ of allowed
second components $\alpha$. So we need only prove that $\cF'$ is
preserved under morphisms.

Hence let $(\sigma,\alpha) \in \cF'(T)$, let $\pi \in \Hom_\FI(S,T)$
satisfy $\im(\pi) \supseteq \im(\sigma)$, and consider
$(\tilde{\sigma},\tilde{\alpha}):=\cF(\pi)((\sigma,\alpha))$. If
$(\tilde{\sigma},\tilde{\alpha}) \sim_S (\sigma'',\alpha'')$
where $\alpha''$ has a count vector in $\tilde{v}+\ZZ_{\geq
0}^I$, then by Axiom (2) for model functors there exists
a pair $(\sigma',\alpha') \sim_T (\sigma,\alpha)$ with
$\cF(\pi)((\sigma',\alpha'))=(\sigma'',\alpha'')$. This means that the
count vector of $\alpha'$ is an element of $M$ that is componentwise
greater than or equal to the count vector of $\alpha''$ and hence, by the
choice of $\tilde{v}$, the count vector of $\alpha'$ lies in $\tilde{v}+\ZZ_{\geq 0}^I$. This
contradicts the fact that $(\sigma,\alpha) \in \cF'(T)$. Therefore,
$\cF(\pi)((\sigma,\alpha)) \in \cF'(S)$.
\end{proof}

Furthermore, we observe that the downward closed subset $M'$
corresponding to $\cE'$ is strictly contained in $M$, since it does not
contain $\tilde{v} \in M$. Hence the induction hypothesis applies to $\cF'$.

Our task is therefore reduced to counting the $\Sym(S)$-orbits on the
set of $\sim_S$-equivalence classes on $\tilde{\cF}(S):=\cF(S) \setminus
\cF'(S)$. Note that $\tilde{\cF}$ is not a sub-model functor of $\cF$;
rather, $\tilde{\cF}(S)$ consists of all pairs $(\sigma,\alpha)$ that are
equivalent to some pair $(\sigma',\alpha')$ where $\alpha'$ has a count
vector in $\tilde{v}+\ZZ_{\geq 0}^I$. Before counting these, we will
work for a while with the larger set $\cF''(S) \supseteq \tilde{\cF}(S)$
consisting of all pairs $(\sigma,\alpha) \in \cF(S)$ that are equivalent
to some pair $(\sigma',\alpha')$ where the count vector of $\alpha'$
is in $v+\ZZ_{\geq 0}^I \supseteq \tilde{v} + \ZZ_{\geq 0}^I$.

For the time being, fix a pair $(\sigma,\alpha)  \in \cF''(T)$ where
$\alpha$ has count vector in $v + \ZZ_{\geq 0}^I$. This implies that all
elements in $I$ occur very frequently among the entries of $\alpha$, while
all elements in $[k] \setminus I$ occur very infrequently; we call $I$
the {\em frequent set} of $\alpha$ and of the pair $(\sigma,\alpha)$. Now
let $(\sigma',\alpha') \sim_T (\sigma,\alpha)$. Since, by Axiom (3)
for model functors, the equality patterns of $\alpha'$ and $\alpha$
agree on $T \setminus (\im(\sigma) \cup \im(\sigma'))$, there exists a
set $I' \subseteq [k]$ of the same cardinality as $I$, and a bijection
$g=g((\sigma',\alpha'),(\sigma,\alpha)):I \to I'$ such that, for $i
\in T \setminus (\im(\sigma) \cup \im(\sigma'))$, we have $\alpha(i)
= l \in I$ if and only if $\alpha'(i)=g(l)$. In particular, $\alpha'$,
too, has a distinguished set $I' \subseteq [k]$ of elements that occur
very frequently among its entries, while the complement occurs very
infrequently. We call $I'$ the frequent set of $\alpha'$.

Furthermore, if also $(\sigma'',\alpha'') \sim_T
(\sigma,\alpha)$, then we have
\begin{equation} \label{eq:Groupoid1} g((\sigma'',\alpha''),(\sigma',\alpha')) \circ
g((\sigma',\alpha'),(\sigma,\alpha))=g((\sigma'',\alpha''),(\sigma,\alpha))
\end{equation}
as a map from $I$ to the frequent set $I''$ of $\alpha''$, and we have
\begin{equation} \label{eq:Groupoid2}
g((\sigma,\alpha),(\sigma,\alpha))=\id_{I}. \end{equation}

Still using elements from the $\sim_T$-equivalence class of $(\sigma,\alpha)$,
we define a relation $\equiv$ on $T$ as follows: first, $\equiv$
is reflexive, and second, for $i \neq j$ we have $i \equiv j$ if
and only if there exists $(\sigma',\alpha') \sim_T (\sigma,\alpha)$
with $i,j \not \in \im(\sigma')$ and $\alpha'(i)=\alpha'(j) \in
I':=g((\alpha',\sigma'),(\alpha,\sigma))I$.

\begin{lm} \label{lm:Transposition}
Assume that $i \equiv j$. Then $(\sigma,\alpha) \sim_T \cF((i\ j))
(\sigma,\alpha)$, where $(i\ j)$ is the transposition of $i$
and $j$.
\end{lm}

\begin{proof}
If $i=j$, then the statement is obvious. Otherwise, there exists a
pair $(\sigma',\alpha') \sim_T (\sigma,\alpha)$ such that $\alpha'$ is
defined at $i$ and $j$ and takes the same value $l$ in the frequent set of
$\alpha'$. Then $\cF((i\ j)) (\sigma',\alpha') = (\sigma',\alpha')$ and by
Axiom (1), $\cF((i\ j))(\sigma,\alpha) \sim_T \cF((i\ j))(\sigma',\alpha')
= (\sigma',\alpha') \sim_T (\sigma,\alpha)$, as desired.
\end{proof}

\begin{lm}
The relation $\equiv$ is an equivalence relation on $T$.
\end{lm}

\begin{proof}
First note that---using Axiom (3) for model functors---if $i \neq j$
satisfy $i \equiv j$, then in fact for {\em all} $(\sigma',\alpha')
\sim_T (\sigma,\alpha)$ with $i,j \in T \setminus \im(\sigma')$ we
have $\alpha'(i)=\alpha'(j) \in I'$.  Since $\equiv$ is reflexive and
symmetric by definition, we only need to show transitivity. For this, assume
that $i \equiv j$ and $j \equiv h$, where we may assume that $i,j,h$
are all distinct. Let $(\sigma',\alpha') \sim_T (\sigma,\alpha)$ be
such that $\alpha'(j)=\alpha'(h) \in I'$. Now if $\alpha'$ is defined
at $i$, then $i \equiv j$ implies that also $\alpha'(i)=\alpha'(j)$
so that $i \equiv h$. Assume that $\alpha'$ is not defined at $i$. Then
let $i' \in T\setminus \{i,j,h\}$ be a position where $\alpha'$ {\em is}
defined and such that $i' \equiv i$---this exists, because
there exists a pair $(\sigma'',\alpha'') \sim_T
(\sigma,\alpha)$ for which $\alpha''$ is defined
at $i$ (and defined and equal at $j$); now set
$l:=\alpha''(i)$, an element in the frequent set of
$\alpha''$, and take for $i'$ any element from
$((\alpha'')^{-1}(l)) \setminus \{i,j,h\}$.
Then by
Lemma~\ref{lm:Transposition} we have $(\sigma',\alpha') \sim_T \cF((i\
i'))(\sigma',\alpha')$ and the latter element {\em is} defined at $i,j,h$.
This proves transitivity.
\end{proof}

So for all elements $(\sigma',\alpha')$ in the $\sim_T$-equivalence
class of $(\sigma,\alpha)$ we have the same, well-defined equivalence
relation $\equiv$ on $T$. Let $T_1 \subseteq T$ be the set of elements
that form a singleton class; we call $T_1$ the {\em core} of (the
$\sim_T$-equivalence class of) $(\sigma,\alpha)$. Note that $T_1=T_{10}
\sqcup T_{11}$ where $T_{10}$ is the set of those positions in $T$
that are in $\im(\sigma')$ for {\em all} $(\sigma',\alpha') \sim_T
(\sigma,\alpha)$ and $T_{11}$ is the set of elements that are in
$(\alpha')^{-1}([k] \setminus I')$ for some $(\sigma',\alpha') \sim_T
(\sigma,\alpha)$ and $I'=g((\sigma',\alpha'),(\sigma,\alpha))I$. We have
$T_{10} \subseteq \im(\sigma)$ and also $T_{11} \subseteq \im(\sigma)
\cup \alpha^{-1}([k] \setminus I)$; in particular, $|T_1|$ is bounded
from above by $|S_0| + |\alpha^{-1}([k] \setminus I)|=|S_0|+d$, where $d$
was the number used in the construction of $v \in M$.

The same reasoning applies to {\em any} element $(\sigma,\alpha)
\in \cF''(T)$: it unambiguously determines a subset $J \subseteq
[k]$ (the frequent set of $\alpha$) of cardinality $|J|=|I|$
and a subset $T_1 \subseteq T$ of some bounded size (the core
of the pair), as well as a surjection $\tau:T \setminus T_1
\to J$ defined by $\tau(i)=l$ if and only if there exists a
$(\alpha',\sigma') \sim_T (\alpha,\sigma)$ with $\alpha'$ defined at
$i$ and $\alpha'(i)=g((\alpha',\sigma'),(\alpha,\sigma))(l)$; and this
surjection only has large fibres. Furthermore, passing to another element
of the $\sim_T$-equivalence class, the core $T_1$ remains the same, $J$
is acted upon by a bijection $g$ to yield a $J'$, and $\tau$ is composed with
that same bijection. We now determine how certain morphisms transform
the data $J,T_1,\tau$.

\begin{lm} \label{lm:CoreMorphism}
Let $\pi \in \Hom_{\FI}(S,T)$ be such that $\im(\pi)$
contains the core $T_1 \subseteq T$ of (the
$\sim_T$-equivalence class of) $(\sigma,\alpha)$, and assume that
$(\tilde{\sigma},\tilde{\alpha}):=\cF(\pi)((\sigma,\alpha))$ lies in
$\cF''(S)$. Then the frequent set of $\tilde{\alpha}$ equals
the frequent set $J$ of $\alpha$, the core $S_1 \subseteq S$
of $(\tilde{\sigma},\tilde{\alpha})$ equals $\pi^{-1}(T_1)$,
and the surjection $\tilde{\tau}: S \setminus S_1 \to J$
determined by $(\tilde{\sigma},\tilde{\alpha})$ is the map
$\tau \circ (\pi|_{S \setminus S_1})$ where $\tau:T \setminus T_1 \to
J$ is the surjection determined by $(\sigma,\alpha)$.
\end{lm}

\begin{proof}
That the frequent set $J$ remains unchanged is immediate:
the elements that appear frequently in $\tilde{\alpha}$ also
appear frequently in $\alpha$ and vice versa.

For the statement about the core, it suffices to show that distinct
$i,j \in S$ satisfy $i \equiv j$ in the equivalence relation on $S$ defined by
$(\tilde{\sigma},\tilde{\alpha})$ if and only if $\pi(i)\equiv \pi(j)$
in the equivalence relation on $T$ defined by $(\sigma,\alpha)$.

Let $i,j \in S$ be distinct and assume $i \equiv j$, so that there exists a pair
$(\tilde{\sigma}',\tilde{\alpha}') \sim_S (\tilde{\sigma},\tilde{\alpha})$
with $\tilde{\alpha}'(i)=\tilde{\alpha}'(j) \in J$. By Axiom (2)
there exists a pair $(\sigma',\alpha') \sim_T (\sigma,\alpha)$ with
$\cF(\pi)((\sigma',\alpha'))=(\tilde{\sigma}',\tilde{\alpha}')$, and we
find that $\alpha'(\pi(i))=\alpha'(\pi(j)) \in J$, so $\pi(i) \equiv
\pi(j)$.

Conversely, let $i,j \in S$ be distinct and assume that $\pi(i) \equiv
\pi(j)$, so there exists a pair $(\sigma',\alpha') \sim_T (\sigma,\alpha)$
with $\alpha'(\pi(i))=\alpha'(\pi(j)) \in J$. Since $\im(\pi)$ contains
$T_1$, it contains all elements of $\im(\sigma') \cap T_1$. Using
Lemma~\ref{lm:Transposition} we may apply transpositions $\cF((h\
h'))$ to $(\sigma',\alpha')$ for all $h \in \im(\sigma) \setminus
\im(\pi) \subseteq T \setminus T_1$, where the $h' \equiv h$ are all
chosen distinct, disjoint from $\im(\sigma)$ and from
$\{i,j\}$, and inside $\im(\pi) \setminus T_1$,
to arrive at a $(\sigma'',\alpha'') \sim_T (\alpha',\alpha')$ still
satisfying $\alpha''(\pi(i))=\alpha''(\pi(j))$ and now also satisfying
$\im(\sigma'') \subseteq \im(\pi)$. So we may apply $\cF(\pi)$ to
$(\sigma'',\alpha'')$ and find
that $i \equiv j$.

For the statement about $\tilde{\tau}$ let $i \in S
\setminus S_1 = S \setminus \pi^{-1}(T_1)$. Then there exists a
$(\tilde{\sigma}',\tilde{\alpha}') \sim_S (\tilde{\sigma},\tilde{\alpha})$
such that $\tilde{\alpha}'$ is defined at $i$. By Axiom (2)
there exists a $(\sigma',\alpha') \sim_T (\sigma,\alpha)$ with
$\cF(\pi)((\sigma',\alpha'))=(\tilde{\sigma}',\tilde{\alpha}')$. In
particular, $\alpha'$ is defined at $\pi(i)$ and we have
$\alpha'(\pi(i))=\tilde{\alpha}'(i)$, so that
$\tilde{\tau}(i)=\tau(\pi(i))$, as desired.
\end{proof}

A special case of the lemma is that where $S=T$, and we find that
$\Sym(S)$ acts in the expected manner on the data consisting of the
frequent set (namely, trivially) and on the core $S_1$ and the map $\tau$.
The cardinality of the core is an invariant under this action, and also
preserved under the more general morphisms of Lemma~\ref{lm:CoreMorphism}.

The core is a finite subset of cardinality at most $|S_0|+d$. For
each $e \in \{0,\ldots,|S_0|+d\}$ let $\cF''_e(S)$ be the set of
elements in $\cF''(S)$ with a core of cardinality $e$, and set
$\tilde{\cF}_e(S):=\tilde{\cF}(S) \cap \cF''_e(S)$. We are done
once we establish that for each $e$ the set of $\Sym(S)$-orbits on
$\tilde{\cF}_e(S)/\!\!\sim_S$ is quasipolynomial in $|S|$ for $|S| \gg 0$.

We will decouple the core from the rest of $S$. A first justification
for this is the following lemma.

\begin{lm} \label{lm:SplitSym}
The number of $\Sym([e] \sqcup S)$-orbits on $A:=\tilde{\cF}_e([e]
\sqcup S)/\!\!\sim_{[e]\sqcup S}$ equals the number of $\Sym([e]) \times
\Sym(S)$-orbits on the set $B$ of elements in $\tilde{\cF}_e([e]
\sqcup S)/\!\!\sim_{[e]\sqcup S}$ with core equal to $[e]$.
\end{lm}

\begin{proof}
The inclusion map $B \to A$ induces a map $B/(\Sym([e]) \times \Sym(S))
\to A/\Sym([e] \sqcup S)$. This map is surjective since the core of
any element in $A$ can be moved into $[e]$ by an element of $\Sym([e]
\sqcup S)$. To see that it is also injective, note that if $\pi \in
\Sym([e] \sqcup S)$ and $(\sigma,\alpha),(\sigma',\alpha') \in
B$ satisfy
$\cF(\pi)((\sigma,\alpha))=(\sigma',\alpha')$, then $\pi$ must preserve
the common core $[e]$ of both tuples, hence $\pi \in \Sym([e]) \times
\Sym(S)$.
\end{proof}

Consider a pair $(\sigma,\alpha) \in \cF''_e([e] \sqcup T)$ with
core equal to $[e]$. To such a pair we associate the quintuple
$(J,\tau,\sigma_0,\sigma_1,\overline{\alpha})$ determined by:
\begin{enumerate}
\item the frequent set $J$ of $\alpha$;
\item the surjection $\tau: T \to J$;
\item the partially defined map $\sigma_0:S_0 \to [e]$, which is the
restriction of $\sigma$ to $\sigma^{-1}([e])$;
\item the map $\sigma_1:S_0 \setminus \dom(\sigma_0)
\to J$ defined by $\sigma_1(i)=\tau(\sigma(i))$.
\item the restriction $\overline{\alpha}$ of $\alpha$ to $[e]
\setminus \im(\sigma)$, which takes values in $[k] \setminus J$.
\end{enumerate}
The quintuple remembers everything about the pair $(\sigma,\alpha)$
{\em except} the exact values (in $T$) of $\sigma$ on $S_0 \setminus
\dom(\sigma_0)$: of these values, only their equivalence classes under
$\equiv$ classes are remembered;
these can be read off from $\sigma_1$ (and $\tau$). If another pair
$(\sigma',\alpha') \in \cF''_e([e] \sqcup T)$ with core equal to
$[e]$ yields the same quintuple, then $(\sigma',\alpha')$ differs
from $(\sigma,\alpha)$ by a permutation of $T$ that permutes elements
within their equivalence classes under $\equiv$. Hence, by repeatedly applying
Lemma\ref{lm:Transposition}, we find that $(\sigma',\alpha') \sim_{[e]
\sqcup T} (\sigma,\alpha)$.

We will use the notation $\sim_{[e] \sqcup T}$
for the induced equivalence relation on such quintuples. Note that
$\Sym([e])$ fixes ($J$ and) $\tau$, whereas $\Sym(T)$ fixes ($J$ and)
$\sigma_0,\sigma_1,\overline{\alpha}$. Also note that all components of
the quintuple except $\tau$ can take only finitely many different
values as $T$ varies. We call $(J,\sigma_0,\sigma_1,\overline{\alpha})$ the
quadruple determined by the quintuple (and {\em a fortiori} also determined by the pair
$(\sigma,\alpha)$).

We now come to the central tool for proving quasipolynomiality;
note that now we work with $\tilde{\cF}$ instead of $\cF''$---recall
that $\tilde{\cF}$ and its complement $\cF'$ were defined using
$\tilde{v}=v+v_1=v_0 + 2v_1$, while $\cF'' \supseteq \tilde{\cF}$ was
defined using $v$.

\begin{de} \label{de:GroupoidG}
Let $G$ be the finite directed graphs whose vertices are all
quadruples of pairs in $\tilde{\cF}_e([e] \sqcup T)$ with
core $[e]$, where $T$ runs through $\FI$, and whose arrows
from one quadruple $(J,\sigma_0,\sigma_1,\overline{\alpha})$ to
$(J',\sigma_0',\sigma_1',\overline{\alpha'})$ are all bijections
$g((\sigma',\alpha'),(\sigma,\alpha)):J \to J'$ coming from
pairs $(\sigma',\alpha') \sim_{[e] \sqcup T} (\sigma,\alpha)$ in
$\tilde{\cF}_e([e] \sqcup T)$ with core $[e]$ and with the prescribed
quadruples.
\end{de}

Typically, the same bijection $g$ will arise from more than one pair of
pairs with the prescribed quadruples; it then only appears once as an
arrow $g$ between those quadruples. The following proposition will be
used below to establish that $G$ is, in fact, a groupoid.

\begin{prop} \label{prop:MatchSources}
Let $(J,\tau,\sigma_0,\sigma_1,\overline{\alpha})$ be the quintuple of a
pair $(\sigma,\alpha)\in \tilde{\cF}_e([e] \sqcup T)$ with core $[e]$ and
let $g$ be an arrow in $G$ from $(J,\sigma_0,\sigma_1,\overline{\alpha})$
to $(J',\sigma_0',\sigma_1',\overline{\alpha'})$. Then $(J',g \circ
\tau,\sigma_0',\sigma_1',\overline{\alpha'})$ is also the quintuple
of some pair in $\tilde{\cF}_e([e] \sqcup T)$ with core $[e]$, and
that quintuple is $\sim_{[e] \sqcup T}$-equivalent to the original
quintuple. Furthermore, all quintuples equivalent to the original
quintuple arise in this manner.
\end{prop}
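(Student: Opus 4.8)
The plan is to establish the ``furthermore'' assertion first, since it is little more than a restatement of facts recorded just before Lemma~\ref{lm:CoreMorphism}, and then to produce the new quintuple by a single application of Axiom~(2), after having realized the abstract arrow $g$ in a form whose source pair is $(\sigma,\alpha)$ itself.

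For the ``furthermore'' part, suppose $(\sigma',\alpha')\sim_{[e]\sqcup T}(\sigma,\alpha)$ with both pairs lying in $\tilde\cF_e([e]\sqcup T)$ and having core $[e]$. As recorded just before Lemma~\ref{lm:CoreMorphism}, passing from $(\sigma,\alpha)$ to $(\sigma',\alpha')$ preserves the core, carries the frequent set $J$ to $J'$ via the bijection $g:=g((\sigma',\alpha'),(\sigma,\alpha))$, and post-composes $\tau$ with $g$; hence the quintuple of $(\sigma',\alpha')$ equals $(J',g\circ\tau,\sigma_0',\sigma_1',\overline{\alpha'})$, and by Definition~\ref{de:GroupoidG} the bijection $g$ is an arrow of $G$ from $(J,\sigma_0,\sigma_1,\overline{\alpha})$ to $(J',\sigma_0',\sigma_1',\overline{\alpha'})$. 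Thus every quintuple equivalent to that of $(\sigma,\alpha)$ is of the asserted form.

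For existence and equivalence, let $g\colon J\to J'$ be an arrow of $G$ from $Q:=(J,\sigma_0,\sigma_1,\overline\alpha)$ to $Q':=(J',\sigma_0',\sigma_1',\overline{\alpha'})$. By Definition~\ref{de:GroupoidG}, $g$ is realized by a pair $(\hat\sigma,\hat\alpha)\sim_{[e]\sqcup\hat T}(\hat\sigma',\hat\alpha')$ in $\tilde\cF_e([e]\sqcup\hat T)$ with cores $[e]$, quadruples $Q$ and $Q'$, and $g=g((\hat\sigma',\hat\alpha'),(\hat\sigma,\hat\alpha))$. Restricting this realization, if necessary, along an injection whose image contains the two cores, the two images $\im(\hat\sigma),\im(\hat\sigma')$, the boundedly many positions carrying non-frequent values, and enough positions of each frequent value to preserve a count-vector witness in $v+\ZZ_{\geq0}^I$, and invoking Axiom~(1), Lemma~\ref{lm:CoreMorphism} and the persistence of frequent values, I may --- since $G$ has finitely many arrows and $|T|\gg0$ --- assume $|\hat T|\le|T|$ while retaining cores $[e]$, quadruples $Q,Q'$, membership in $\cF''$, the $\sim$-equivalence, and the bijection $g$. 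Next I would choose an injection $\pi\colon[e]\sqcup\hat T\hookrightarrow[e]\sqcup T$ fixing $[e]$, with $\im(\pi)\supseteq\im(\sigma)$, sending $\hat\sigma(j)$ to $\sigma(j)$ for every $j\in S_0$, and sending each position of $\hat T$ outside $\im(\hat\sigma)$ carrying $\hat\alpha$-value $l\in J$ to a position of $T$ outside $\im(\sigma)$ carrying $\alpha$-value $l$. Such a $\pi$ exists: since $(\sigma,\alpha)\in\tilde\cF_e$ and $v(l)\gg0$ for $l\in I$, every $l\in J$ occurs very often in $\alpha$, so the bounded data over $\hat T$ can be embedded injectively; and by construction $\cF(\pi)(\sigma,\alpha)=(\hat\sigma,\hat\alpha)$. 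Then $\cF(\pi)(\sigma,\alpha)=(\hat\sigma,\hat\alpha)\sim_{[e]\sqcup\hat T}(\hat\sigma',\hat\alpha')$, so Axiom~(2) (applicable because $\im(\pi)\supseteq\im(\sigma)$) yields a pair $(\sigma',\alpha')\sim_{[e]\sqcup T}(\sigma,\alpha)$ with $\im(\sigma')\subseteq\im(\pi)$ and $\cF(\pi)(\sigma',\alpha')=(\hat\sigma',\hat\alpha')$. Being $\sim$-equivalent to $(\sigma,\alpha)$, the pair $(\sigma',\alpha')$ lies in $\tilde\cF_e([e]\sqcup T)$ and has core $[e]$; applying Lemma~\ref{lm:CoreMorphism} to $\pi$ and $(\sigma',\alpha')$ identifies its quadruple with that of $\cF(\pi)(\sigma',\alpha')=(\hat\sigma',\hat\alpha')$, namely $Q'$; and evaluating $\alpha$ and $\alpha'$ at the points $\pi(i)$ for $i\in\hat T\setminus(\im(\hat\sigma)\cup\im(\hat\sigma'))$ --- which under $\cF(\pi)$ reduce to the comparison defining $g$ --- shows that $g((\sigma',\alpha'),(\sigma,\alpha))=g$. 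Therefore the quintuple of $(\sigma',\alpha')$ is $(J',g\circ\tau,\sigma_0',\sigma_1',\overline{\alpha'})$, which is the asserted quintuple, and, by the previous paragraph, it is $\sim_{[e]\sqcup T}$-equivalent to that of $(\sigma,\alpha)$.

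The main obstacle is the explicit construction of $\pi$ with $\cF(\pi)(\sigma,\alpha)=(\hat\sigma,\hat\alpha)$, together with the preliminary reduction to $|\hat T|\le|T|$. Both are elementary but demand careful bookkeeping: of the $\tau$-fibres of $\alpha$, of the finitely many positions carrying non-frequent values, of the two images $\im(\hat\sigma),\im(\hat\sigma')$, and of a count-vector witness. The one structural fact used is that, by the lemma in \S\ref{ssec:SecondQuasi}, the part of $M$ lying componentwise above $v$ is exactly $v+\ZZ_{\geq0}^I$; this is what makes ``retaining enough frequent positions'' a well-posed and achievable requirement. Everything else follows routinely from Axioms~(1)--(2) and Lemma~\ref{lm:CoreMorphism}.
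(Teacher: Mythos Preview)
Your proof is correct and follows essentially the same route as the paper: realize the arrow $g$ by a pair of equivalent elements over some auxiliary index set, shrink that realization so that it can be matched against $(\sigma,\alpha)$, and then apply Axiom~(2) to lift back to $[e]\sqcup T$. The one organizational difference is in the shrinking step. You do it in two stages---first restrict the realization $(\hat\sigma,\hat\alpha)\sim(\hat\sigma',\hat\alpha')$ to a bounded $\hat T$ (relying on the ``$v(l)\gg0$'' slack to guarantee the subsequent embedding into $T$), then build a single injection $\pi\colon[e]\sqcup\hat T\hookrightarrow[e]\sqcup T$ with $\cF(\pi)(\sigma,\alpha)=(\hat\sigma,\hat\alpha)$. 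The paper instead constructs an explicit intermediate set $[e]\sqcup\bigsqcup_{l\in J}([m_l]\sqcup[n_l])$ with $m_l=\min(|\alpha^{-1}(l)|,|\tilde\alpha^{-1}(l)|)$ and two injections $\pi,\pi'$ into $[e]\sqcup S$ and $[e]\sqcup T$ respectively; this componentwise-minimum device makes the fibre-by-fibre embedding automatic without appealing to largeness of $|T|$. Both arguments invoke the $\cF''$ versus $\tilde\cF$ distinction at the same point and for the same reason (the shrunken pairs may fall out of $\tilde\cF$ but stay in $\cF''$), and both finish identically via Axiom~(2) and Lemma~\ref{lm:CoreMorphism}.
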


\begin{proof}
The last statement is immediate: such an equivalent quintuple comes from
an equivalent pair $(\sigma',\alpha') \sim_{[e] \sqcup T} (\sigma,\alpha)$,
and for the arrow we can take $g=g((\sigma',\alpha'),(\sigma,\alpha))$.

For the first statement, let $(\tilde{\sigma},\tilde{\alpha})
\sim_{[e] \sqcup S} (\tilde{\sigma}',\tilde{\alpha}')$
be pairs with the given quadruples such that
$g=g((\tilde{\sigma}',\tilde{\alpha}'),(\tilde{\sigma},\tilde{\alpha}))$.

We will replace these equivalent pairs by smaller pairs, the first of
which we can relate to $(\sigma,\alpha)$ so as to apply Axiom (2). The
details are as follows.

For each $l \in J$ let $m_l$ be the minimum of $|\alpha^{-1}(l)|$ and
$|\tilde{\alpha}^{-1}(l)|$ and set $n_l:=|\sigma_1^{-1}(l)|$. Define
an injection $\pi$ from $[e] \sqcup \bigsqcup_{l \in J}([m_l] \sqcup
[n_l])$ to $[e] \sqcup S$ that is the identity on $[e]$, sends each
$[m_l]$ injectively to $m_l$ elements in $\alpha^{-1}(l) \subseteq S$
and sends each $[n_l]$ bijectively to the elements in
$\im(\tilde{\sigma})
\cap S$ where $\tau$ takes the value $l$. This construction
ensures that
$\cF(\pi)$ is defined at $(\tilde{\sigma},\tilde{\alpha})$. It might
{\em a priori} not be defined at $(\tilde{\sigma}',\tilde{\alpha}')$,
because $\im(\tilde{\sigma}') \cap S$ might not be contained in
$\im(\pi)$. But if it is not, then using Lemma~\ref{lm:Transposition}
we can replace $(\tilde{\sigma}',\tilde{\alpha}')$ by a
$\sim_S$-equivalent pair, with the same quadruple and with
the same bijection $g:J \to J'$, such that $\cF(\pi)$ {\em
is} defined at $(\tilde{\sigma}',\tilde{\alpha}')$. Now replace
$(\tilde{\sigma},\tilde{\alpha})$ and $(\tilde{\sigma}',\tilde{\alpha}')$
by their images under $\cF(\pi)$.

The point of distinguishing $\tilde{\cF}$ and $\cF''$ is that
these images may not be in $\tilde{\cF}$.  The reason is that while
$\tilde{v}+\ZZ_{\geq 0}^I$ is closed under the map that sends a pair
$w_1,w_2$ of vectors to the componentwise minimum vector
$\min(w_1,w_2)$ defined by $\min(w_1,w_2)(l):=\min(w_1(l),w_2(l))$, the
count vectors of pairs in $\tilde{\cF}$ with a fixed frequent set $J$ may
not quite be closed under componentwise minimum: the number of entries
that $\alpha$ has in the frequent set is not quite invariant under the
equivalence relations $\sim$, although it is up to a bounded difference.
This is remedied by allowing the componentwise minimum to have a count vector
in the bigger set $v+\ZZ_{\geq 0}^I$.

Hence the new $(\tilde{\sigma},\tilde{\alpha})$ and
$(\tilde{\sigma}',\tilde{\alpha}')$ are in $\cF''([e] \sqcup
\bigsqcup_{l \in J}([m_l] \sqcup [n_l]))$ and are related by the
same bijection $g$. Now construct similarly an injection $\pi':[e]
\sqcup \bigsqcup_{l \in J}([m_l] \sqcup [n_l]) \to [e] \sqcup T$
which is the identity on $[e]$, sends $[m_l]$ injectively into the set
$\alpha^{-1}(l)$, and sends $[n_l]$ bijectively to the set of elements
in $\im(\sigma) \cap T$ where $\tau$ takes the value $l$. Then
$(\tilde{\sigma},\tilde{\alpha})=\cF(\pi')((\sigma,\alpha))$.
Now apply Axiom (2) to find that there exists a pair
$(\sigma',\alpha') \sim_{[e] \sqcup T} (\sigma,\alpha)$ such that
$\cF(\pi')((\sigma',\alpha'))=(\tilde{\sigma}',\tilde{\alpha}')$.
The pair $(\sigma',\alpha')$ has the required quintuple. Moreover, since the pair
is $\sim_{[e] \sqcup T}$-equivalent to $(\sigma,\alpha)$, so
are their quintuples.
\end{proof}

\begin{prop}
The finite graph $G$ is a groupoid with objects its vertices (quadruples),
arrows as in Definition~\ref{de:GroupoidG}, and composition maps given
by composing the bijections $g$.
\end{prop}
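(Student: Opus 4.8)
The plan is to verify the groupoid axioms by hand, using the two cocycle-type identities \eqref{eq:Groupoid1} and \eqref{eq:Groupoid2} for the bijections $g$, together with Proposition~\ref{prop:MatchSources} to convert graph-level composability of arrows into a genuine chain of $\sim$-equivalent pairs. First the easy axioms. Let $Q=(J,\sigma_0,\sigma_1,\overline{\alpha})$ be a vertex, witnessed (by definition) by some pair $(\sigma,\alpha)$ in $\tilde{\cF}_e([e]\sqcup T)$ with core $[e]$ and this quadruple. Then \eqref{eq:Groupoid2} says $\id_J=g((\sigma,\alpha),(\sigma,\alpha))$ is an arrow $Q\to Q$; since the composition law of $G$ is literally composition of bijections, $\id_J$ is a two-sided identity at $Q$. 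For inverses, suppose $g\colon Q_1\to Q_2$ is an arrow, witnessed by a pair $(\sigma',\alpha')\sim_{[e]\sqcup T}(\sigma,\alpha)$ with prescribed quadruples $Q_1,Q_2$, so that $g=g((\sigma',\alpha'),(\sigma,\alpha))$. Interchanging the roles of the two pairs exhibits $g((\sigma,\alpha),(\sigma',\alpha'))\colon J_2\to J_1$ as an arrow $Q_2\to Q_1$, and applying \eqref{eq:Groupoid1} (with the third pair taken equal to one of the first two) and then \eqref{eq:Groupoid2} gives $g((\sigma,\alpha),(\sigma',\alpha'))\circ g=\id_{J_1}$ and $g\circ g((\sigma,\alpha),(\sigma',\alpha'))=\id_{J_2}$. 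Hence every arrow has a two-sided inverse in $G$, so (once closure under composition is known) every arrow is an isomorphism.

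The substantive point is well-definedness of composition: given arrows $g\colon Q_1\to Q_2$ and $h\colon Q_2\to Q_3$, one must show that the bijection $h\circ g\colon J_1\to J_3$ is again an arrow of $G$. Here $g$ and $h$ are a priori witnessed by unrelated pairs, possibly over different index sets, so the first task is to realign them. Choose any pair $(\sigma,\alpha)$ in $\tilde{\cF}_e([e]\sqcup T)$ with core $[e]$ and quadruple $Q_1$, with quintuple $(J_1,\tau,\ldots)$. Applying Proposition~\ref{prop:MatchSources} to this quintuple and the arrow $g$ produces a pair $(\sigma',\alpha')\sim_{[e]\sqcup T}(\sigma,\alpha)$ with core $[e]$ and quintuple $(J_2,g\circ\tau,\ldots)$; in particular its quadruple is exactly $Q_2$. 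Comparing the two descriptions of the surjection attached to $(\sigma',\alpha')$—namely $g\circ\tau$ from Proposition~\ref{prop:MatchSources}, and $g((\sigma',\alpha'),(\sigma,\alpha))\circ\tau$ from the general discussion preceding Lemma~\ref{lm:CoreMorphism}—and cancelling the surjection $\tau$, we get $g((\sigma',\alpha'),(\sigma,\alpha))=g$. Now apply Proposition~\ref{prop:MatchSources} a second time, to the quintuple $(J_2,g\circ\tau,\ldots)$ of $(\sigma',\alpha')$ and the arrow $h$ (whose source is $Q_2$): this yields a pair $(\sigma'',\alpha'')\sim_{[e]\sqcup T}(\sigma',\alpha')$ with core $[e]$, quintuple $(J_3,h\circ g\circ\tau,\ldots)$, quadruple $Q_3$, and, by the same cancellation argument, $g((\sigma'',\alpha''),(\sigma',\alpha'))=h$. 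By transitivity of $\sim_{[e]\sqcup T}$ the three pairs are pairwise equivalent, so $g((\sigma'',\alpha''),(\sigma,\alpha))\colon J_1\to J_3$ is by definition an arrow $Q_1\to Q_3$, and \eqref{eq:Groupoid1} identifies it with $g((\sigma'',\alpha''),(\sigma',\alpha'))\circ g((\sigma',\alpha'),(\sigma,\alpha))=h\circ g$. Thus $h\circ g$ is an arrow; associativity is automatic since composition is that of bijections. Finally, $G$ is a \emph{finite} groupoid: the quadruples form a finite set, as all of their components take only finitely many values as $T$ ranges over $\FI$, and each hom-set consists of bijections between subsets of $[k]$, of which there are finitely many.

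I expect the only delicate step to be the double invocation of Proposition~\ref{prop:MatchSources}, which is precisely what licenses the passage from ``$g$ and $h$ are composable as arrows of the graph $G$'' (their target and source agree \emph{as quadruples}, i.e.\ after forgetting the surjections $\tau$) to ``$g$ and $h$ are realised by a single coherent chain $(\sigma,\alpha)\sim(\sigma',\alpha')\sim(\sigma'',\alpha'')$ to which \eqref{eq:Groupoid1} applies''. Everything else—identities, inverses, associativity, finiteness—is a formal consequence of \eqref{eq:Groupoid1}, \eqref{eq:Groupoid2} and the bookkeeping of quintuples.
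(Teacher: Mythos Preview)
Your proof is correct and follows the same strategy as the paper: identities and inverses come formally from \eqref{eq:Groupoid1}--\eqref{eq:Groupoid2}, and closure under composition is obtained by invoking Proposition~\ref{prop:MatchSources} to realise composable arrows by a single chain of $\sim$-equivalent pairs, after which \eqref{eq:Groupoid1} applies. The only difference is economy: the paper starts from the concrete witness pair of one of the two arrows (so that two of the three required pairs are already in hand) and applies Proposition~\ref{prop:MatchSources} once to produce the third, whereas you start from an arbitrary pair with the correct source quadruple and apply the proposition twice; both routes use the same implicit cancellation of the surjection $\tau$ to identify the resulting $g((\cdot),(\cdot))$ with the given arrow.
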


Before proving this in general, we revisit
Example~\ref{ex:Model}.

\begin{ex}
Consider the functor $\cF:\FI \to \PF$ that maps $S$ to
\[ \cF(S)=\{(\sigma,\alpha) \mid \sigma \in \Hom_\FI(S_0,S),
\alpha \in [d]^{S \setminus \im(\sigma)}\} \]
where $S_0$ is a singleton and where we identify $[d]$ with $\ZZ/d\ZZ$ via
$a \mapsto a+d\ZZ$. Let $\sim_S$ be as in Example~\ref{ex:Model},
i.e. $(\sigma,\alpha) \sim (\sigma',\alpha')$ if and only
if either the pairs are equal, or else $\{j_0\}:=\im(\sigma)
\neq \im(\sigma')=:\{j_0'\}$ and for all $j \in S \setminus
\{j_0,j_0'\}$ we have $\alpha'(j)-\alpha'(j_0)=\alpha(j)$ and
$\alpha(j)-\alpha(j_0')=\alpha'(j)$ in $\ZZ/d\ZZ$.

In this case, $I=[d]$, and $v \in [d]^I$ is any vector containing
sufficiently many copies of each element of $[d]$.
Fix a pair $(\sigma,\alpha)  \in \cF''(T)$ where $\alpha$
has count vector in $v + \ZZ_{\geq 0}^I$. Set $\{j_0\}:=\im(\sigma)$. The
frequent set of $\alpha$ is $J=I=[d]$, and all equivalence classes in
$T$ under $\equiv$,
except possibly for that of $j_0$, are large, since the
fibres of $\alpha$ are large.

Pick a $j_0' \neq j_0$, let $\sigma'$ be the map $S_0 \to T$ with
image $\{j_0'\}$, and define $\alpha':T \setminus \{j_0'\} \to [d]$ via
$\alpha'(j)=\alpha(j)$ if $j \neq j_0,j_0'$ and
$\alpha'(j_0)=d$ (which is identified with $0+d\ZZ$). Then it
follows that $(\sigma,\alpha) \sim_T (\sigma',\alpha')$.  Since $\alpha'$
is defined at $j_0$ and takes the same value $d$ at least once more
(in fact, many times), we conclude that the equivalence class of $j_0$
under $\equiv$ is not a singleton, either. Hence the core of
$(\sigma,\alpha)$ is empty.

We can now complete the quintuple of $(\alpha,\sigma)$: the partially
defined map $\sigma_0$ from $S_0$ to the core $\emptyset$ is the
only such map, the surjection $\tau: T \to J$ equals $\alpha$
on $T \setminus \{j_0\}$ and $d$ on $j_0$, the map $\sigma_1$
maps the singleton $S_0$ to $d$, and $\overline{\alpha}$ is
the empty map $\emptyset \to \emptyset$. Note that the quadruple
$(J,\sigma_0,\sigma_1,\overline{\alpha})$ does not depend on the
particular choice of $(\sigma,\alpha)$, so the groupoid $G$
has a single object.

To determine all arrows in $G$, suppose that $(\sigma,\alpha) \sim_T
(\sigma'',\alpha'')$, set $\{j_0''\}:=\im(\sigma')$, and assume that
$j_0'' \neq j_0$ (otherwise we get the
identity arrow). Then for any $j \in T'':=T \setminus
\{j_0,j_0''\}$, we have
\[
\alpha''(j)-\alpha(j)=(\alpha''(j)-\alpha''(j_0))+(\alpha''(j_0)-\alpha(j))
=\alpha(j)+(\alpha''(j_0)-\alpha(j))=\alpha''(j_0)
\]
so that $\alpha''|_{T''}$ is obtained from $\alpha|_{T''}$ by a
coordinate-wise shift over the constant $\alpha''(j_0)=:a$. The map
$g_{(\sigma,\alpha),(\sigma'',\alpha'')}:[d] \to [d]$ is adding $a$
(modulo $d$). Conversely, for every choice of $\alpha''(j_0) \in
[d]=\ZZ/d\ZZ$, there is a unique pair $(\sigma'',\alpha'')$ equivalent to
$(\sigma,\alpha)$. We conclude that the groupoid $G$ is just (isomorphic
to) the group $\ZZ/d\ZZ$. 
\end{ex}

\begin{proof}
That $G$ has identity arrows follows from \eqref{eq:Groupoid2}, and
that all arrows are invertible follows from \eqref{eq:Groupoid1}
combined with \eqref{eq:Groupoid2}. It remains to check that
composition is well-defined. So let $g_1$ be an arrow in $G$ from
a vertex $q:=(J,\sigma_0,\sigma_1,\overline{\alpha})$ to a vertex
$q':=(J',\sigma_0',\sigma_1',\overline{\alpha'})$, and let $g_2$ be an
arrow in $G$ from $q'':=(J'',\sigma_0'',\sigma_1'',\overline{\alpha''})$
to $q$. We need to show that $g_1 \circ g_2$ is an arrow in $G$ from
$q''$ to $q'$. Now the existence of the arrow $g_2$ means that there
are a finite set $T$ and pairs $(\sigma,\alpha),(\sigma'',\alpha'')
\in \tilde{F}_e([e] \sqcup T)$ with core $[e]$ and quadruples $q,q''$,
respectively, such that $g_2=g((\sigma,\alpha),(\sigma'',\alpha''))$.

Let $(J,\tau,\sigma_0,\sigma_1,\overline{\alpha})$ be the quintuple of
$(\sigma,\alpha)$. By Proposition~\ref{prop:MatchSources}, $(J',g_1
\circ \tau,\sigma_0',\sigma_1',\overline{\alpha'})$ is the quintuple
of another pair $(\sigma',\alpha') \in \tilde{\cF}_e([e] \sqcup
T)$ with core $[e]$ and $(\sigma',\alpha') \simeq_{[e] \sqcup T}
(\sigma,\alpha)$. Then $g_1=g((\sigma',\alpha'),(\sigma,\alpha))$,
and $g_1 \circ g_2=g((\sigma',\alpha'),(\sigma'',\alpha''))$
is an arrow from $q''$ to $q'$, as desired.
\end{proof}

Consider the class of quintuples arising from pairs $(\sigma',\alpha')
\in \tilde{\cF}([e] \sqcup T), T \in \FI$ with core $[e]$. This class of quintuples
comes with a natural anchor map to the objects of $G$, namely, the map
that forgets $\tau$. The following says that equivalence classes of
quintuples are precisely orbits under $G$.

\begin{cor} \label{cor:GroupoidOrbits}
The groupoid $G$ acts on the set of quintuples of pairs in
$\tilde{\cF}([e] \sqcup T)$ with core $[e]$, for $T$ varying through
$\FI$, via the anchor map that sends a quintuple to its corresponding
quadruple. The orbits of this action are precisely the $\sim_{[e]
\sqcup T}$-equivalence classes. The action of $G$ commutes with the
action of $\Sym(T)$.
\end{cor}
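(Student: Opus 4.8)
The plan is to derive this corollary directly from Proposition~\ref{prop:MatchSources} and Lemma~\ref{lm:CoreMorphism}; all the substantive content is already contained in those two results, so what remains is to package it correctly. First I would fix the set being acted on: for a quadruple $q=(J,\sigma_0,\sigma_1,\overline{\alpha})$ as in Definition~\ref{de:GroupoidG}, let $X_q$ be the set of all quintuples $(J,\tau,\sigma_0,\sigma_1,\overline{\alpha})$ --- over all $T\in\FI$ --- that arise from pairs in $\tilde{\cF}_e([e]\sqcup T)$ with core $[e]$ and quadruple $q$, and let $X:=\bigsqcup_q X_q$ with anchor map $a\colon X\to\mathrm{Ob}(G)$ the forgetful map $(J,\tau,\sigma_0,\sigma_1,\overline{\alpha})\mapsto q$. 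For an arrow $g\colon q\to q'$ of $G$, writing $q'=(J',\sigma_0',\sigma_1',\overline{\alpha'})$, I would define
\[
\phi(g)\colon X_q\to X_{q'},\qquad (J,\tau,\sigma_0,\sigma_1,\overline{\alpha})\longmapsto (J',\,g\circ\tau,\,\sigma_0',\,\sigma_1',\,\overline{\alpha'}).
\]
The first assertion of Proposition~\ref{prop:MatchSources}, applied for each fixed $T$, says precisely that the right-hand side is again the quintuple of a pair in $\tilde{\cF}_e([e]\sqcup T)$ with core $[e]$, hence lies in $X_{q'}$ (over the same $T$), so $\phi(g)$ is well defined. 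The groupoid-action axioms of \S\ref{ssec:OrbitCounting} are then immediate: the identity arrow at $q$ is $\id_J$ (an arrow of $G$ by~\eqref{eq:Groupoid2}) and $\id_J\circ\tau=\tau$, so $\phi(\id_q)=\id_{X_q}$; and for composable arrows $g_2\colon q''\to q$ and $g_1\colon q\to q'$, both $\phi(g_1\circ g_2)$ and $\phi(g_1)\circ\phi(g_2)$ replace $\tau$ by $(g_1\circ g_2)\circ\tau$, so they agree. Thus $(\phi,a)$ is a groupoid action of $G$ on $X$.

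To identify the orbits, I would fix a quintuple $x\in X_q$ arising from a pair $(\sigma,\alpha)\in\tilde{\cF}_e([e]\sqcup T)$ with core $[e]$. The orbit $G(q)x$ consists of the quintuples $\phi(g)x$ as $g$ ranges over the arrows of $G$ out of $q$. By the first assertion of Proposition~\ref{prop:MatchSources} each $\phi(g)x$ is $\sim_{[e]\sqcup T}$-equivalent to $x$, so $G(q)x$ is contained in the $\sim_{[e]\sqcup T}$-class of $x$; by the last assertion of the same proposition every quintuple that is $\sim_{[e]\sqcup T}$-equivalent to $x$ has the form $\phi(g)x$, giving the reverse inclusion. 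Hence $G(q)x$ is exactly the $\sim_{[e]\sqcup T}$-class of $x$. Since the $\sim_{[e]\sqcup T}$-classes partition $X$ (a quintuple equivalent to $x$ comes from a pair equivalent to $(\sigma,\alpha)$, which again lies in $\tilde{\cF}_e([e]\sqcup T)$ with core $[e]$, because $\tilde{\cF}$ and the conditions ``core $[e]$'' are $\sim$-invariant), and groupoid orbits partition $X$ as well, the orbits of $G$ on $X$ are precisely the $\sim_{[e]\sqcup T}$-classes.

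Finally I would treat the action of $\Sym(T)$. For $\pi\in\Sym(T)$, regarded inside $\Sym([e]\sqcup T)$ as fixing $[e]$ pointwise, the induced bijection of $\tilde{\cF}_e([e]\sqcup T)$ preserves the property of having core $[e]$, and applying Lemma~\ref{lm:CoreMorphism} with $S=T=[e]\sqcup T$ shows that on quintuples it fixes $J$, fixes the core $[e]$, and replaces $\tau$ by $\tau\circ\pi$; a direct inspection of the definitions of $\sigma_0,\sigma_1,\overline{\alpha}$ shows that these are unchanged precisely because $\pi$ fixes $[e]$ pointwise. So $\Sym(T)$ acts on each $X_q$ by the precomposition $\tau\mapsto\tau\circ\pi$, whereas $G$ acts by the postcomposition $\tau\mapsto g\circ\tau$; since precomposition and postcomposition of maps always commute (and $\Sym(T)$ leaves every quadruple fixed), the two actions commute. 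I expect the main obstacle not to be any individual step but the bookkeeping: one has to keep the disjoint union over varying $T$ aligned with Proposition~\ref{prop:MatchSources} and Lemma~\ref{lm:CoreMorphism}, which are phrased one $T$ at a time, and one has to verify carefully that the $\sim_{[e]\sqcup T}$-classes genuinely partition $X$ so that they can be matched with the orbits of a groupoid action.
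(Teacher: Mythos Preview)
Your proposal is correct and follows essentially the same route as the paper's proof: define the action of an arrow $g$ on a quintuple by $(J,\tau,\sigma_0,\sigma_1,\overline{\alpha})\mapsto(J',g\circ\tau,\sigma_0',\sigma_1',\overline{\alpha'})$, invoke Proposition~\ref{prop:MatchSources} for well-definedness and for the identification of orbits with $\sim_{[e]\sqcup T}$-classes, and observe that commutation with $\Sym(T)$ is just associativity $(g\circ\tau)\circ\pi=g\circ(\tau\circ\pi)$. The paper's version is considerably terser (three sentences), and you have simply filled in the bookkeeping that the paper leaves implicit; your appeal to Lemma~\ref{lm:CoreMorphism} to justify that $\Sym(T)$ acts by precomposition on $\tau$ and fixes the quadruple is a harmless elaboration of what the paper takes for granted.
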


\begin{proof}
The action of an arrow $g:(J,\sigma_0,\sigma_1,\overline{\alpha})
\to (J',\sigma'_0,\sigma'_1,\overline{\alpha'})$ on a quintuple
$(J,\tau,\sigma_0,\sigma_1,\overline{\alpha})$ yields the quintuple $(J',g
\circ \tau,\sigma'_0,\sigma'_1,\overline{\alpha'})$, and the axioms for
a groupoid action are readily verified. The action commutes with $\pi
\in \Sym(T)$ because $(g \circ \tau) \circ \pi=g \circ (\tau
\circ \pi)$. That the $G$-orbits on quintuples coming from
pairs $(\sigma,\alpha) \in \tilde{\cF}([e] \sqcup T)$ with
core $[e]$ are precisely the $\sim_{[e] \sqcup
T}$-equivalence classes follows from the last statement of
Proposition~\ref{prop:MatchSources}.
\end{proof}

Recall that, by Lemma~\ref{lm:SplitSym}, we need to count the elements
of $\tilde{\cF}([e] \sqcup T)$ with core $[e]$ up to $\sim_{[e] \sqcup
T}$ as well as up to the action of $\Sym([e]) \times \Sym(T)$.
Modding out the action of $\Sym(T)$ is now straightforward: it
just consists of replacing $\tau:T \to J$ by its count vector $u$
in $\ZZ_{\geq 0}^J$. Thus now the groupoid $G$ acts on quintuples
$(J,u,\sigma_0,\sigma_1,\overline{\alpha})$ coming from pairs in
$\tilde{\cF}_e([e] \sqcup T)$ with core $[e]$, where $u$ is a vector in
$\ZZ_{\geq 0}^J$. Also the group $\Sym([e])$ acts on such quintuples,
indeed, it acts on the corresponding quadruples and fixes $u$. We need to
count the quintuples up to the action of $G$ and $\Sym([e])$.  To do so,
we first observe that $\Sym([e])$ acts by automorphisms of $G$ on the
objects of $G$: if there is an arrow $g:q \to q'$, then there is also
an arrow $\pi(q) \to \pi(q')$ with the same label $g$, for each $\pi \in
\Sym([e])$. To stress that this arrow has a different source and target,
we write $\pi (g:q \to q') \pi^{-1}$ for that arrow $\pi(q)
\to \pi(q')$.

We now combine these actions into that of a larger groupoid $\tilde{G}$
with the same ground set as $G$ and with arrows $q \to q''$ all
pairs $(\pi,g:q \to q')$ where $g$ is an arrow from the quadruple $q$
to some quadruple $q'$ and $\pi \in \Sym([e])$ maps $q'$ to $q''$.
The composition $(\pi',g')\circ(\pi,g)$, where $g':q'' \to q'''$ and
$\pi' (q''')=q''''$, is defined as $(\pi' \circ \pi,\pi^{-1} (g':q''
\to q''')\pi \circ g)$. It is straightforward to see that $\tilde{G}$
is, indeed, a finite groupoid acting on quintuples. Now we are left to
count orbits of $\tilde{G}$ on quintuples.

\begin{prop}
There exists a quasipolynomial $f$ such that, for $n \gg 0$, the
number of orbits of $\tilde{G}$ on quintuples $(J,u \in \ZZ_{\geq 0}^J
,\sigma_0,\sigma_1,\overline{\alpha})$ arising from pairs $(\sigma,\alpha)
\in \tilde{\cF}_e([e] \sqcup [n])$ with core $[e]$ equals $f(n)$.
\end{prop}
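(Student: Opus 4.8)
The plan is to apply the orbit-counting lemma for groupoids (Lemma~\ref{lm:Burnside}) to the action of the finite groupoid $\tilde{G}$ on the set $\Omega_n$ of quintuples $(J,u,\sigma_0,\sigma_1,\overline{\alpha})$ arising from pairs in $\tilde{\cF}_e([e]\sqcup[n])$ with core $[e]$, and then to identify each fixed-point count occurring in that lemma with a lattice-point count in a rational polytope, which is quasipolynomial by the results already cited in Proposition~\ref{prop:BurnsideStanley}.

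First I would fix a quadruple $q=(J,\sigma_0,\sigma_1,\overline{\alpha})$ (an object of $\tilde{G}$; there are only finitely many). A quintuple with this quadruple is determined by the single datum $u\in\ZZ_{\geq 0}^J$, the count vector of $\tau$, and one has $|u|=n$ automatically, $n$ being the number of non-core positions. Writing $w_1\in\ZZ_{\geq 0}^J$ for the count vector of $\sigma_1$ and $w_0\in\ZZ_{\geq 0}^{[k]\setminus J}$ for that of $\overline{\alpha}$ --- both determined by $q$ --- one checks by unwinding the definitions that the count vector of the underlying $\alpha$ is $(u-w_1)\oplus w_0\in\ZZ_{\geq 0}^k$. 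Hence $\alpha\in\cE$ iff this vector lies in the downward-closed set $M$, and, using Axiom~(3) together with the bound $|S_0|+d$ on core sizes established above (which also bounds the number of concealed positions), membership of the pair in $\tilde{\cF}$ rather than $\cF'$ translates, for each $l\in J$, into the requirement that $u_l$ exceed a threshold depending only on $q$ and on $\tilde{v}=v_0+2v_1$. I would record this as a lemma: the set $P_q\subseteq\ZZ_{\geq 0}^J$ of vectors $u$ that actually occur is the set of lattice points of a rational polyhedron $\overline{P}_q$ --- more precisely a finite union of translated rational cones, and in the generic situation simply $c_q+\RR_{\geq 0}^J$ for a suitable $c_q\in\ZZ_{\geq 0}^J$. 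Turning the combinatorial conditions cutting out $\tilde{\cF}$ into these linear inequalities on $u$ is the step I expect to be the main obstacle, and it is where the factor $2$ in $\tilde{v}$ and all three Axioms genuinely enter.

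Next I would describe the $\tilde{G}$-action concretely. An arrow in $\tilde{G}(q,q)$ is a pair $(\pi,g_0)$ with $\pi\in\Sym([e])$ acting only on the core and $g_0$ an arrow of $G$; since $\pi$ fixes the frequent set, $g_0$ is a permutation of $J$, and $(\pi,g_0)$ sends the quintuple with quadruple $q$ and data $u$ to the one with data $g_0\cdot u$, where $(g_0\cdot u)_l=u_{g_0^{-1}(l)}$. Thus, in the notation of \S\ref{ssec:OrbitCounting}, the fixed-point set of $(\pi,g_0)$ acting on $\Omega_{n,q}:=\{(J,u,\sigma_0,\sigma_1,\overline{\alpha})\mid u\in P_q,\ |u|=n\}$ is
\[
\{u\in P_q\mid g_0\cdot u=u,\ |u|=n\}=\bigl(\overline{P}_q\cap V_{g_0}\cap H_n\bigr)\cap\ZZ^J,
\]
where $V_{g_0}=\{u\mid g_0\cdot u=u\}$ is a rational linear subspace and $H_n=\{u\mid\sum_l u_l=n\}$. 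The polyhedron $\overline{P}_q\cap V_{g_0}\cap H_n$ is bounded (because $u\geq 0$ and $|u|=n$), hence a rational polytope depending linearly on $n$, so by \cite[Theorem 4.5.11 and Proposition 4.4.1]{Stanley97} the number of its lattice points is a quasipolynomial in $n$ for $n\gg 0$ (when $\overline{P}_q$ is a finite union of cones one combines the finitely many contributions, e.g.\ via a Stanley decomposition \cite{Stanley82}). Note also that $\tilde{G}$ preserves the grading of $\Omega:=\bigsqcup_n\Omega_n$ by $n=|u|$, since arrows act on $u$ by a permutation of coordinates, which preserves $|u|$, while the $\Sym([e])$-part does not touch $u$.

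Finally I would invoke Lemma~\ref{lm:Burnside} for the action of $\tilde{G}$ on $\Omega_n$: the number of orbits equals
\[
\sum_{q}\ \frac{1}{|\tilde{G}(q)|}\ \sum_{(\pi,g_0)\in\tilde{G}(q,q)}\bigl|\{u\in P_q\mid g_0\cdot u=u,\ |u|=n\}\bigr|,
\]
the outer sum ranging over the finitely many quadruples. This is a finite $\QQ$-linear combination of functions of $n$ each of which is eventually quasipolynomial; since quasipolynomials form a $\QQ$-vector space closed under such combinations (taking the maximum of the finitely many thresholds), the total is a quasipolynomial $f(n)$ for $n\gg 0$, as asserted. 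Together with Corollary~\ref{cor:GroupoidOrbits}, Lemma~\ref{lm:SplitSym}, and the induction on the downward-closed set $M$ set up at the start of this subsection, this completes the proof of Theorem~\ref{thm:ModelFunctor}.
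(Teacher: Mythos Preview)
Your approach is essentially the paper's: apply Lemma~\ref{lm:Burnside} to $\tilde{G}$, observe that an arrow $(\pi,g)\in\tilde{G}(q,q)$ fixes a quintuple over $q$ iff $g$ fixes the count vector $u$, and then count lattice points in rational polyhedra. The structure of the argument and the final Ehrhart-type count match.

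The one place where your proposal diverges from the paper---and where it is not yet a proof---is the characterisation of the set $P_q$ of admissible count vectors. You assert that membership in $\tilde{\cF}$ rather than $\cF'$ ``translates, for each $l\in J$, into the requirement that $u_l$ exceed a threshold,'' so that $P_q$ is essentially $c_q+\ZZ_{\geq 0}^J$. This is not how the condition works: $(\sigma,\alpha)\in\tilde{\cF}$ means \emph{some $\sim$-equivalent pair} has count vector in $\tilde{v}+\ZZ_{\geq 0}^I$, and equivalent pairs can have different frequent sets related by the bijections $g$. There is no direct way to read this off as coordinate-wise lower bounds on $u$. The paper does not attempt such a translation; instead it argues indirectly: applying $\cF(\pi)$ for an inclusion $\pi$ missing one position $i$ with $\alpha(i)=l$ decrements $u$ by $e_l$ and lands in $\cF''$ (possibly leaving $\tilde{\cF}$), while the same operation applied to a pair in $\cF''\setminus\tilde{\cF}$ stays in $\cF''\setminus\tilde{\cF}$. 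This shows $P_q=N\setminus N'$ for two downward-closed sets $N'\subseteq N\subseteq\ZZ_{\geq 0}^J$, which after Stanley decomposition yields your ``finite union of translated rational cones'' and feeds into the same lattice-point count. You correctly flag this step as the main obstacle; the decrement-closure argument is the missing idea that resolves it.
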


\begin{proof}
Applying the orbit-counting lemma for groupoids (Lemma~\ref{lm:Burnside}),
it suffices to show that for each object of $\tilde{G}$, which is a
quadruple $q=(J,\sigma_0,\sigma_1,\overline{\alpha})$, and for each
arrow $(\pi,g):q \to q$ in $\tilde{G}$, the number of fixed points of
$(\pi,g)$ on quintuples with the given quadruple $q$ is quasipolynomial
for $n \gg 0$. Such a quintuple is fixed if and only if the count vector
$u \in \ZZ_{\geq 0}^J$ is fixed by $g$---indeed, $\pi$ acts trivially
on the count vector.

Now we figure out the structure of the set of count vectors arising
from such quintuples. First, if $u$ is the count vector of a quintuple
(with the fixed quadruple $q$) arising from $(\sigma,\alpha) \in
\tilde{\cF}_e([e] \sqcup [n])$, and if $l \in J$ and $i \in [n]$ such that
$\alpha(i)=l$, then $u-e_l$ is the count vector of the quintuple arising
from the pair $(\tilde{\sigma},\tilde{\alpha}):=\cF(\pi)((\sigma,\alpha))$,
where $\pi:[e] \sqcup [n-1] \to [e] \sqcup [n]$ is the identity on
$[e]$ and increasing from $[n-1] \to [n]$ and does not hit $i$. This
suggest that the set of count vectors that we are considering is downward
closed. However, it may be that $(\tilde{\sigma},\tilde{\alpha})$ is in
$\cF''([e] \sqcup [n-1]) \setminus \tilde{\cF}([e] \sqcup [n-1])$.

On the other hand, if we had started with $(\sigma,\alpha) \in \cF''([e]
\sqcup [n]) \setminus \tilde{\cF}([e] \sqcup [n])$ and applied such
an element $\cF(\pi)$ to it, the result would not have been
an element of $\tilde{\cF}([e] \sqcup [n-1])$.

This shows that the set of relevant count vectors is the difference
$N \setminus N'$, where $N$ and $N'$ are downward closed sets in
$\ZZ_{\geq 0}^J$. Hence, using the Stanley decomposition as in the
proof of Proposition~\ref{prop:BurnsideStanley}, the fixed
points $u$ that we are
counting are the lattice points in a finite disjoint union of rational
cones, each given as the intersection of a linear space (the eigenspace
of $g$ in $\RR^J$ with eigenvalue $1$) and a finite union of sets of the form
$u+\ZZ_{\geq 0}^{J'}$ with $J' \subseteq J$. The number of such $u$
is a quasipolynomial in $n=\sum_l u(l)$ for $n \gg 0$ for each of the
finitely many choices of $(\pi,g)$, hence so is their sum.
\end{proof}

This completes the proof of Theorem~\ref{thm:ModelFunctor}.

\end{proof}

\subsection{Pre-component functors} \label{ssec:PreComponent}

We will see that, if $X$ is a width-one $\FI$-scheme of finite type
over a Noetherian ring $K$, then we can cover $X(S)$ by means of closed,
irreducible subsets parameterised by a finite number of model functors
evaluated at $S$. Then, of course, the irreducible
components of $X(S)$
are among these. But to ensure that we are neither double-counting
components of $X(S)$ nor counting closed subsets that are strictly
contained in components, we need to keep track of the inclusions among
these closed subsets. At the combinatorial level, this is done using
compatible quasi-orders.

\begin{de}
Let $a \in \ZZ_{\geq 0}$, for each $b \in [a]$ let $k_b \in \ZZ_{\geq
0}$ and let $S \mapsto
\cF_b(S)/\!\!\sim_{b,S}$ be a model functor, where $\cF_b(S)$ is the set
of pairs $(\sigma,\alpha)$ with $\sigma:S_{0,b} \to S$ and $\alpha \in
\cE_b(S) \subseteq [k_b]^S$.

Suppose that we are given, for each $S \in \FI$, a quasi-order $\preceq_S$
(i.e., a reflexive and transitive relation) on the disjoint union
\[ \cF(S):=\bigsqcup_{b=1}^a \cF_b(S). \]
This collection of quasi-orders is called {\em compatible} if the
following properties are satisfied:
\begin{description}

\item[Compatibility (1)] whenever $\cF_b(S) \ni (\sigma,\alpha) \preceq_S (\sigma',\alpha')
\in \cF_{b'}(S)$, we have $b \leq b'$;

\item[Compatibility (2)] the restriction of $\preceq_S$ to each $\cF_b(S)$ equals the
equivalence relation $\sim_{b,S}$; and

\item[Compatibility (3)] for
all $\pi \in \Hom_\FI(S,T)$ and $(\sigma,\alpha) \in \cF_b(T)$ and
$(\sigma',\alpha') \in \cF_{b'}(T)$ with $\im(\pi) \supseteq \im(\sigma)\cup
\im(\sigma')$ we have
\[ (\sigma,\alpha) \preceq_T (\sigma',\alpha') \Rightarrow
\cF_b(\pi)((\sigma,\alpha)) \preceq_S
\cF_{b'}(\pi)((\sigma',\alpha')). \qedhere \]
\end{description}
\end{de}

Note that the condition that $\im(\pi)$ contains both $\im(\sigma)$
and $\im(\sigma')$ implies that $\cF_b(\pi)((\sigma,\alpha))$ and
$\cF_{b'}(\pi)((\sigma',\alpha'))$ are both defined.

\begin{de} \label{de:PreComponent}
In the setting of the previous definition, we introduce an
equivalence relation $\sim_S$ on $\cF(S)$ by $(\sigma,\alpha) \sim_S
(\sigma',\alpha')$ if both $(\sigma,\alpha) \preceq_S (\sigma',\alpha')$
and $(\sigma',\alpha') \preceq_S (\sigma,\alpha)$. Note that, by the
first and second axioms for pre-component functors, $(\sigma,\alpha) \in
\cF_b(S)$ is $\sim_S$-equivalent to $(\sigma',\alpha') \in \cF_{b'}(S)$
if and only if $b=b'$ and $(\sigma,\alpha) \sim_{b,S} (\sigma',\alpha')$.

The quasi-order $\preceq_S$ induces a partial order on the set
\[ \cF(S)/\!\!\sim_S=\bigsqcup_{b} (\cF_b(S)/\!\!\sim_{b,S})
\]
of equivalence
classes. We define the functor $\cC:\FI \to \PF$ on objects by
\[ \cC(S):=\{\text{the maximal elements of } \cF(S)/\!\!\sim_S\} \]
and on a morphism $\pi:S \to T$ as follows. Let $c \in \cC(T)$
be a maximal equivalence class. If $c$ contains some element
$(\sigma,\alpha)
\in \cF_b(T)$ at which $\cF_b(\pi)$ is defined (i.e., with $\im(\sigma)
\subseteq \im(\pi)$), and if, moreover,
$c':=[\cF_b(\pi)((\sigma,\alpha))]_{\sim_S}$ is also maximal in
$\cF(S)/\!\!\sim_S$, then we set
$\cC(\pi)(c):=c' \in \cC(S)$.
By the axioms for compatible quasi-orders, this
is independent of the choice of the representative
$(\sigma,\alpha)$ of $c$---subject to the requirement that
$\cF_i(\pi)$ be defined at that representative.

A functor $\cC:\FI \to \PF$ obtained in this manner is called a {\em pre-component
functor}.
\end{de}

\subsection{The final quasipolynomial count}
\label{ssec:FinalQuasi}

We retain the notation from \S\ref{ssec:PreComponent}: for each $b \in
[a]$ we have a model functor $S \mapsto \cF_b(S)/\!\!\sim_{b,S}$, and on
the disjoint unions $\cF(S):=\bigsqcup_{b} \cF_b(S)$ (for $S \in \FI$)
we have compatible quasi-orders $\preceq_S$.

\begin{thm} \label{thm:PreComponent}
Let $\cC$ be the pre-component functor corresponding to the data
above. Then the number of $\Sym(S)$-orbits on $\cC(S)$ is a
quasipolynomial in $|S|$ for all $S$ with $|S| \gg 0$.
\end{thm}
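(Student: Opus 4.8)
The plan is to express the number of maximal equivalence classes as (the total number of classes) minus (the number of non-maximal ones), both $\Sym(S)$-equivariantly, and to reduce the latter count to Theorem~\ref{thm:ModelFunctor}. Since $\preceq_S$ is $\Sym(S)$-invariant (apply Compatibility (3) with $\pi$ a permutation), the maximal classes of $\cF(S)/\!\!\sim_S=\bigsqcup_b\cF_b(S)/\!\!\sim_{b,S}$ form a $\Sym(S)$-stable subset, so
\[ |\cC(S)/\Sym(S)|=\sum_{b=1}^{a}\bigl|(\cF_b(S)/\!\!\sim_{b,S})/\Sym(S)\bigr|-|\cN(S)/\Sym(S)|, \]
where $\cN(S)$ denotes the ($\Sym(S)$-stable) set of non-maximal classes. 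Each summand is a quasipolynomial in $|S|$ for $|S|\gg0$ by Theorem~\ref{thm:ModelFunctor} applied to the model functor $\cF_b$, so it suffices to prove that $|\cN(S)/\Sym(S)|$ is a quasipolynomial for $|S|\gg0$.

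By Compatibility (1) and (2), a class in $\cF_b(S)/\!\!\sim_{b,S}$ is non-maximal exactly when it lies strictly below, in the quasi-order $\preceq_S$, some class in a strictly higher block, and by transitivity of $\preceq_S$ that class may be taken directly in some $\cF_{b'}(S)$ with $b'>b$. Writing $D_{b,b'}(S)$ for the $\Sym(S)$-stable set of classes in $\cF_b(S)/\!\!\sim_{b,S}$ that are $\preceq_S$ some class in $\cF_{b'}(S)/\!\!\sim_{b',S}$, we obtain $\cN(S)\cap(\cF_b(S)/\!\!\sim_{b,S})=\bigcup_{b'>b}D_{b,b'}(S)$. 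Since an orbit of a group acting on a finite set never straddles a decomposition into stable subsets, inclusion--exclusion is valid for orbit counts, and it reduces the problem to the following: for every $b$ and every nonempty $B\subseteq\{b+1,\dots,a\}$, the number of $\Sym(S)$-orbits on $D_{b,B}(S):=\bigcap_{b'\in B}D_{b,b'}(S)$ is a quasipolynomial in $|S|$ for $|S|\gg0$.

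To prove this I would show that the pairs $(\sigma,\alpha)\in\cF_b(S)$ whose class lies in $D_{b,B}(S)$ form a sub-model functor $\cF_b^{(B)}$ of $\cF_b$; Theorem~\ref{thm:ModelFunctor} then finishes the argument. That $\cF_b^{(B)}(S)$ is a union of $\sim_{b,S}$-classes is immediate from transitivity of $\preceq_S$; since it is moreover $\Sym(S)$-stable, it is cut out by a subset $M_b^{(B)}\subseteq M_b$ of count vectors, where $M_b$ is the downward-closed count-vector set of $\cF_b$. The remaining content is that $M_b^{(B)}$ is downward closed, i.e.\ that $D_{b,B}$ is stable under deleting a point of the ground set outside $\im(\sigma)$: given $(\sigma,\alpha)\in\cF_b(T)$ with class in $D_{b,B}(T)$ and the inclusion $\pi\colon S\hookrightarrow T$ of a subset containing $\im(\sigma)$, one must exhibit, for each $b'\in B$, a class in $\cF_{b'}(S)/\!\!\sim_{b',S}$ dominating $\cF_b(\pi)((\sigma,\alpha))$.

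I expect this last point to be the main obstacle. Given a dominating pair $(\sigma',\alpha')\in\cF_{b'}(T)$, the natural idea is to replace it, inside its $\sim_{b',T}$-class, by a representative $(\tilde\sigma',\tilde\alpha')$ whose first component maps into $S$; then Compatibility (3) for $\pi$ — whose image now contains $\im(\sigma)\cup\im(\tilde\sigma')$ — gives $\cF_b(\pi)((\sigma,\alpha))\preceq_S\cF_{b'}(\pi)((\tilde\sigma',\tilde\alpha'))$, as wanted. Producing such a representative is exactly the flexibility analysed in the proof of Theorem~\ref{thm:ModelFunctor}: positions of $\im(\sigma')$ lying outside $S$ but in large $\equiv$-classes can be moved into $S$ using the transpositions of Lemma~\ref{lm:Transposition}, while Lemma~\ref{lm:CoreMorphism} controls the ``core'' positions that cannot be moved. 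The delicate case is that of dominating pairs whose count vectors are small, where the relevant $\equiv$-classes need not be large; I would handle these by passing to $|S|\gg0$ and, if necessary, by replacing ``sub-model functor'' with ``eventually a sub-model functor'', which still suffices to invoke Theorem~\ref{thm:ModelFunctor}. Everything else is the bookkeeping above together with results already established.
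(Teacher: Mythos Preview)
Your overall plan --- subtract the non-maximal classes from the total --- is exactly the paper's strategy, and your opening reductions are fine. The gap is in the claim that $D_{b,B}$ (or even the simpler union $D_b=\bigcup_{b'>b}D_{b,b'}$) is a sub-model functor of $\cF_b$. This is false in general, and your proposed fixes do not rescue it.

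Here is a small counterexample within the axioms. Take $a=2$, $S_{0,1}=\emptyset$, $k_1=2$, $\cE_1(S)=[2]^S$ with trivial $\sim_{1,S}$; and $S_{0,2}=[1]$, $k_2=1$, $\cE_2$ full with trivial $\sim_{2,S}$. Declare $\alpha\in\cF_1(S)\preceq_S(\sigma',\alpha')\in\cF_2(S)$ iff $\alpha(\sigma'(1))=1$. Compatibilities~(1)--(3) are immediate. Then $D_{1,\{2\}}(S)=\{\alpha:1\in\im(\alpha)\}$, whose count-vector set is $\{(v_1,v_2):v_1\ge1\}$, which is \emph{not} downward closed (for every $n$, $(1,n-1)$ lies in it but $(0,n)$ does not). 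So $D_{1,\{2\}}$ is not a sub-model functor, and the failure persists for arbitrarily large $|S|$, so ``eventually a sub-model functor'' does not help. Your idea of moving $\im(\sigma')$ into $\im(\pi)$ via Lemma~\ref{lm:Transposition} also fails here: since $\sim_{2,T}$ is trivial, the core of every $(\sigma',\alpha')\in\cF_2(T)$ equals $\im(\sigma')$, so no transposition can dislodge it. The inclusion--exclusion over $B$ only compounds the problem and is in any case unnecessary.

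The paper resolves this not by treating Theorem~\ref{thm:ModelFunctor} as a black box, but by re-entering its inductive proof. Inside $\tilde{\cF}_b$ one works with quintuples and their count vectors $u\in\ZZ_{\ge0}^J$; the point is that for $l\in J$ the value $l$ occurs \emph{frequently} in $\alpha$, so given any dominating $(\sigma',\alpha')\in\cF_{b'}(T)$ and any $i$ with $\alpha(i)=l$, one can find a twin $j\notin\im(\sigma)\cup\im(\sigma')$ with $\alpha(j)=l$ and apply the transposition $(i\ j)$ on the $\cF_b$ side (Lemma~\ref{lm:Transposition}) before deleting $i$. Thus the set of $u$'s coming from non-maximal pairs is (the intersection with $N\setminus N'$ of) a downward-closed set, and the groupoid orbit count from \S\ref{ssec:SecondQuasi} goes through. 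In short: the transposition trick must be applied on the $\cF_b$ side, where largeness of the count vector is guaranteed by the construction of $\tilde{\cF}_b$, not on the $\cF_{b'}$ side, where you have no control over $(\sigma',\alpha')$.
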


The proof requires the proof technique used in
\S\ref{ssec:SecondQuasi}, and takes up the rest of this
subsection.

\begin{proof}
By Theorem~\ref{thm:ModelFunctor}, we know that the number of
$\Sym(S)$-orbits on the disjoint union of model functors $\bigsqcup_{b}
(\cF_b(S)/\!\!\sim_{b,S})$ is eventually quasipolynomial in
$|S|$. From this disjoint union we will remove, for each $b \in [a]$, the
$\sim_{b,S}$-classes of pairs $(\sigma,\alpha)$ for which there exists
a $b'>b$ and a $(\sigma',\alpha') \in \cF_{b'}(S)$ with $(\sigma,\alpha)
\preceq_S (\sigma',\alpha')$.  We will see that, for a fixed $b \in [a]$,
the $\Sym(S)$-orbits on these deletions are also counted by a quasipolynomial.

To this end, fix $b \in [a]$ and let $M \subseteq \ZZ_{\geq 0}^{k_b}$
be the downward closed subset corresponding to $\cE_b$. From
\S\ref{ssec:SecondQuasi} we recall that, to prove that the number of
$\Sym(S)$-orbits on $\cF_b(S)/\!\!\sim_{b,S}$ is quasipolynomial in $|S|$
for $|S| \gg 0$, we performed induction on $M$ using Dickson's lemma. We do the
same here.

\begin{lm}
Fix $b \in [a]$.
For $|S| \gg 0$, the number of $\Sym(S)$-orbits on pairs $(\sigma,\alpha)
\in \cF_b(S)$ that are $\preceq$ some pair $(\sigma',\alpha') \in
\cF_{b'}(S)$ for some $b'>b$ is quasipolynomial in $|S|$.
\end{lm}

\begin{proof}
We construct the sub-functor $\cE_b' \subseteq \cE_b$
and the corresponding sub-functor $\cF_b' \subseteq \cF_b$
as in \S\ref{ssec:SecondQuasi}, as well as the difference
$\tilde{\cF}_b(S):=\cF_b(S) \setminus \cF'_b(S)$. By induction on $M$
using Dickson's lemma, we may assume that the lemma holds for the
sub-model functor $S \mapsto \cF_b'(S)/\!\!\sim_{b,S}$. On the other
hand, in \S\ref{ssec:SecondQuasi} we counted the $\Sym(S)$-orbits on
$\tilde{\cF}_b(S)/\!\!\sim_{b,S}$, for all $S$, as a finite sum of
orbit counts of certain finite groupoids. More precisely, for finitely
many values of a nonnegative integer $e$, we there considered the
pairs $(\alpha,\sigma) \in
\tilde{\cF}_b([e] \sqcup S)$ with core
equal to $[e]$. Each of these pairs gives rise to a quintuple $(J,u
\in \ZZ_{\geq 0}^J ,\sigma_0,\sigma_1,\overline{\alpha})$ where $J
\subseteq [k_b]$ is the frequent set of $\alpha$, and we showed that
the $\Sym(S)$-orbits on $\sim_{b,[e] \sqcup S}$-equivalence classes
of such pairs $(\alpha,\sigma)$ are in bijection with the orbits of a
certain groupoid on the sum-$|S|$ level set of a difference $N\setminus
N'$ where $N,N' \subseteq \ZZ_{\geq 0}^{k_b}$ are downward closed;
this difference is where the count vector $u \in \ZZ_{\geq 0}^J
\subseteq \ZZ_{\geq 0}^{k_b}$ lives.

Now suppose that $(\sigma,\alpha) \preceq_{[e] \sqcup S}
(\sigma',\alpha')$ for some $(\sigma',\alpha') \in \cF_{b'}([e]
\sqcup S)$ with $b' > b$, and let $i \in S$ be such that $l:=\alpha(i)
\in J$. If $i$ happens to be in $\im(\sigma) \cup \im(\sigma')$, then
choose $j \in S \setminus (\im(\sigma) \cup \im(\sigma'))$ such that
$\alpha(i)=\alpha(j)=l$---this can be done since $l$ appears frequently
in $\alpha$ (although, to be precise, when choosing the
vector $\tilde{v}$ in the construction of $\tilde{F}_b$, we now have to
make sure that its large entries values are large even compared to the
finitely many numbers $k_{b'}, b' \in [a]$).

By Lemma~\ref{lm:Transposition},
$(\sigma,\alpha) \sim_{b,[e] \sqcup S} (\tilde{\sigma},\tilde{\alpha}) := \cF_b((i\
j))((\sigma,\alpha))$, and the axioms for compatible quasi-orders imply
that $(\tilde{\sigma},\tilde{\alpha}) \preceq_{[e] \sqcup S}
(\tilde{\sigma}',\tilde{\alpha}'):=\cF_{b'}((i\
j))((\sigma',\alpha'))$. Moreover, we have achieved that $i \in S \setminus
(\im(\tilde{\sigma}) \cup \im(\tilde{\sigma}'))$.

Now let $\pi:[e] \sqcup S \setminus \{i\} \to [e] \sqcup S$ be the inclusion
map.  Then $\cF_b(\pi)$ and $\cF_{b'}(\pi)$ are defined at
$(\tilde{\sigma},\tilde{\alpha})$ and $(\tilde{\sigma}',\tilde{\alpha}')$,
respectively, and we have
\[ \cF_b(\pi)((\tilde{\sigma},\tilde{\alpha})) \preceq_{[e] \sqcup S
\setminus \{i\}}
\cF_{b'}(\pi)((\tilde{\sigma}',\tilde{\alpha}')). \]
The count vector in $\ZZ_{\geq 0}^J$ of the left-hand side
is just $u-e_l$.

We conclude that the set of count vectors of quintuples corresponding
to pairs in $\tilde{\cF}_{b,e}([e] \sqcup S)$ with core $[e]$
whose $\sim_{[e] \sqcup S}$-equivalence class is not maximal in
$\tilde{F}([e] \sqcup S)/\!\!\sim_{[e] \sqcup S}$
is downward-closed, or more precisely the intersection of a
downward-closed set with the earlier difference $N \setminus N'$ of
downward-closed sets, hence again a difference of downward-closed
subsets of $\ZZ_{\geq 0}^{k_b}$. The same orbit-counting argument with groupoids
as in \S\ref{ssec:SecondQuasi} applies, and shows that the number of
$\Sym(S)$-orbits on pairs in $\tilde{\cF}(S)$ that are not maximal in
$\preceq$ is a quasipolynomial.
\end{proof}

This concludes the proof of Theorem~\ref{thm:PreComponent}.

\end{proof}

\subsection{Component functors ``are'' pre-component functors}
\label{ssec:PreCompFIopScheme}

We are now ready to establish our main result on component
functors of width-one $\FIop$-schemes.

\begin{thm} \label{thm:FISchemePreComponent}
Let $X$ be a reduced and nice width-one affine $\FIop$-scheme of finite
type over a Noetherian ring $K$. Then there exists a pre-component
functor $\cC$ and a morphism $\phi: \cC \to \cC_X$ such that $\phi$
is an isomorphism at the level of species.

More precisely, there exist an $a \in \ZZ_{\geq 0}$, nonnegative
integers $k_b \in \ZZ_{\geq 0}$ for $b \in [a]$, model functors $S
\mapsto \cF_b(S)/\!\!\sim_{b,S}$, a collection of compatible quasi-orders
$\preceq_{S}$ on $\bigsqcup_b \cF_b(S)$ for each $S \in \FI$, and maps $\phi_b$
that assign to any $(\sigma,\alpha) \in \cF_b(S)$ an irreducible,
locally closed subset $\phi_b((\sigma,\alpha))$ of $X(S)$ such that,
for all $S$ and $T$,
\begin{enumerate}
\item $X(S)$ is the union of the sets $\phi_b((\sigma,\alpha))$ for $b
\in [a]$ and $(\sigma,\alpha) \in \cF_b(S)$; 

\item two locally closed sets $\phi_b((\sigma,\alpha))$ and
$\phi_{b'}((\sigma',\alpha'))$ with $(\sigma,\alpha) \in \cF_b(S)$
and $(\sigma',\alpha') \in \cF_{b'}(S)$ are either equal or disjoint;

\item $\cF_b(S) \ni (\sigma,\alpha) \preceq_S (\sigma',\alpha') \in
\cF_{b'}(S)$ holds
if and only if $\phi_b((\sigma,\alpha))$ is contained in the Zariski
closure of $\phi_{b'}((\sigma',\alpha'))$; and

\item if $(\sigma,\alpha) \in \cF_b(T)$ and $\pi \in
\Hom_\FI(S,T)$ are such that $\im(\pi) \supseteq \im(\sigma)$, so that
$(\sigma',\alpha'):=\cF_b(\pi)((\sigma,\alpha))$ is defined, then the
map $X(\pi)$ maps $\phi_b((\sigma,\alpha))$ dominantly into
$\phi_{b}((\sigma',\alpha'))$.
\end{enumerate}

The pre-component functor $\cC$ then assigns to $S$ the equivalence
classes of the maximal elements of $\preceq_S$, and the morphism $\phi$
is given by restricting the $\phi_b$ to these maximal elements and
taking the Zariski closure.
\end{thm}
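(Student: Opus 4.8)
The plan is to construct the combinatorial data in two logically separable packages: first the model functors $\cF_b$ together with their local-closed-set maps $\phi_b$, and then the compatible quasi-orders $\preceq_S$ witnessing inclusions among Zariski closures. For the first package I would invoke the Shift Theorem~\ref{thm:Shift} together with Proposition~\ref{prop:Shift}: after a shift by some $S_0$ and localisation at a nonzero $h \in B(S_0)$, the scheme $X':=(\Sh_{S_0}X)[1/h]$ becomes of product type over a Noetherian domain, and then Proposition~\ref{prop:Elementary} identifies $\cC_{X'}$ with an elementary model functor, i.e. a model functor with empty $S_0$-datum. The locally closed subset of $X(S_0 \sqcup S)$ where $h$ does not vanish is covered by images of $X'(S)$ under permutations in $\Sym(S_0 \sqcup S)$; each such permutation $\sigma$ supplies exactly the $\FIop$-morphism-datum $\sigma:S_0 \to S_0 \sqcup S$ that appears in the definition of a (non-elementary) model functor. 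Thus one ``top-level'' model functor $\cF_a$ (with the largest $b$, corresponding to the biggest $h$-open stratum) is built by combining the injection-species $S \mapsto \Hom_\FI(S_0,S)$ with the elementary model functor for $X'$, and its $\phi_a$ sends a pair $(\sigma,\alpha)$ to $X(\sigma)$ applied to the component of $X'(S \setminus \im(\sigma))$ indexed by $\alpha$. Axioms (1)--(3) for model functors will follow from the corresponding properties of $\cC_{X'}$ (which is an honest elementary model functor) transported through the $\Sym$-action, exactly as is done by hand in Example~\ref{ex:Model} and Example~\ref{ex:Codes}.

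The remaining strata are handled by Noetherian induction using Theorem~\ref{thm:Noetherianity}: let $Y$ be the closed $\FIop$-subscheme of $X$ cut out by (all $\Sym$-translates of) $h$, exactly as in the proof of Theorem~\ref{thm:DimComp}. Then $Y$ is again a reduced, nice width-one $\FIop$-scheme of finite type over $K$, and by induction on the Noetherian poset of closed $\FIop$-subschemes we may assume the theorem holds for $Y$: this furnishes data $(\cF_b, \sim_{b,S}, \preceq^Y_S, \phi^Y_b)_{b \in [a-1]}$ covering $Y(S)$. Pulling these back along the closed embedding $Y \hookrightarrow X$ (and noting that a locally closed subset of $Y(S)$ is locally closed in $X(S)$, and that Zariski closures inside $Y(S)$ agree with Zariski closures inside $X(S)$ since $Y(S)$ is closed) yields maps $\phi_b$ into $X(S)$ for $b < a$, satisfying (1)--(4) on $Y$-strata by the inductive hypothesis. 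Because every point of $X(S_0 \sqcup S)$ lies either in some $Y$-stratum or in the $h$-nonvanishing locus, (1) holds for the combined data; and since the two kinds of strata are disjoint (the $h$-open ones avoid $Y$), (2) holds across the boundary $b=a$ versus $b<a$, while within each block it is inherited. The index $b$ is arranged so that larger $b$ corresponds to ``more generic'' strata — concretely, strata not contained in $Y$ get $b=a$, giving Compatibility~(1) for comparisons crossing the boundary — and within the $Y$-block the inductive $\preceq^Y$ already refines $b$-values appropriately.

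For the compatible quasi-order on all of $\cF(S) = \bigsqcup_b \cF_b(S)$ I would simply \emph{define} $(\sigma,\alpha) \preceq_S (\sigma',\alpha')$ to hold iff $\phi_b((\sigma,\alpha)) \subseteq \overline{\phi_{b'}((\sigma',\alpha'))}$, and then verify the three Compatibility axioms. Reflexivity and transitivity are immediate from properties of Zariski closure. Compatibility~(1) ($b \le b'$) follows because a locally closed subset contained in the closure of another is either equal to it or of strictly smaller dimension / more special position — more precisely, a $Y$-stratum can never be dense in an $h$-open stratum since $Y$ is a proper closed subset of the relevant stratum's closure, so $\phi_b \preceq \phi_{b'}$ with $b'=a$ forces... actually the cleanest route is to order the blocks by the Noetherian induction so that $Y$-blocks precede the $h$-open block, and observe that containment-in-closure respects this because $h$-open strata are not contained in $Y$. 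Compatibility~(2) — that $\preceq_S$ restricted to $\cF_b(S)$ is $\sim_{b,S}$ — is the one genuine thing to check: two pairs in the \emph{same} model functor $\cF_b$ map under $\phi_b$ either to the same irreducible locally closed set (when $\sim_{b,S}$-equivalent) or to distinct ones, and two distinct irreducible sets of the same ``type'' cannot have one contained in the closure of the other; for the top block this is because distinct Galois-orbits of components of the product-type scheme give incomparable irreducible sets, and for the lower blocks it is the inductive hypothesis's property~(3). Compatibility~(3) (pull-back along $\pi$ with $\im(\pi) \supseteq \im(\sigma) \cup \im(\sigma')$ preserves $\preceq$) follows from property~(4): $X(\pi)$ maps each $\phi_b((\sigma,\alpha))$ dominantly into $\phi_b(\cF_b(\pi)(\sigma,\alpha))$, and dominance plus continuity of $X(\pi)$ sends a containment-in-closure to a containment-in-closure after taking images and closures.

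The main obstacle I expect is \textbf{Compatibility~(2)} together with the precise bookkeeping of the $h$-open stratum as a model functor rather than merely an elementary one — in particular verifying model-functor Axiom~(2) (the ``lifting'' axiom) and Axiom~(3) (the equality-pattern axiom) for the non-elementary $\cF_a$ built from the product-type $X'$. Axiom~(2) requires that whenever $X(\pi)$ sends the stratum indexed by $(\sigma,\alpha)$ dominantly onto a stratum equivalent to $(\sigma'',\alpha'')$, the latter can be realised by a pair $(\sigma',\alpha')$ equivalent to $(\sigma,\alpha)$ with $\im(\sigma') \subseteq \im(\pi)$; geometrically this says we can choose the ``base point'' $\sigma'$ of the product decomposition to lie in the image of $\pi$, which holds precisely because $X'(S)=Z^S$ is a genuine product and any single coordinate can serve as the distinguished one — but translating this cleanly into the equivalence-relation language, and making sure the Galois-group bookkeeping of Proposition~\ref{prop:Elementary} interacts correctly with the $\sigma$-data after the shift, is where the real work lies. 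I would also need to be slightly careful that the quasi-order is \emph{well-defined independent of representatives} at the level of $\cC(S)$, but this is forced by Compatibility~(2)--(3) once those are established, as noted in Definition~\ref{de:PreComponent}.
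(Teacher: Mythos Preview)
Your proposal is correct and follows essentially the same route as the paper: apply the Shift Theorem and Proposition~\ref{prop:Shift} to peel off a product-type open stratum indexed by a top model functor, handle the closed complement $Y$ (where all $\sigma(h)$ vanish) by Noetherian induction, and define $\preceq_S$ via Zariski-closure containment. You rightly single out model-functor Axioms~(2) and~(3) for the top stratum as the crux; in the paper Axiom~(2) is the short Galois-twist lifting argument you sketch, while Axiom~(3) (Lemma~\ref{lm:Pattern}) is established by a base-change argument comparing components of $C_{\overline{\tilde{M}}}$ over the separable closures of the function fields attached to the two choices $\sigma,\sigma'$---this is indeed where the real work lies, and it does not reduce to transporting properties of the elementary model functor for $X'$ through the $\Sym$-action alone.
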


The proof of this theorem takes up the rest of this
subsection.

\begin{proof}
Let $X=\Spec(B)$ be a width-one $\FIop$-scheme of finite type over a
Noetherian ring $K$. We assume that $X$ is reduced and nice. By the Shift
Theorem and Proposition~\ref{prop:Shift} there exist $S_0 \in \FI$ and
$h \in B(S_0)$ such that $X':=\Sh_{S_0} X [1/h]=\Spec(B')$ is of product
type in the sense of Definition~\ref{de:ProductType}. In particular,
$B'_0=B(S_0)[1/h]$ is a domain, $X'$ is isomorphic to $S \mapsto Z^S$
where $Z:=X'([1])$, and for each $S \in \FI$, each irreducible component
of $Z^S$ maps dominantly into $\Spec(B'_0)$.

For an $S \in \FI$, let $Z(S)$ be the open subset of $X(S)$ defined by
\[ Z(S):=\{p \in X(S) \mid \exists \sigma:S_0 \to S: (\sigma h)(p)
\neq 0\}. \]
Let $Y(S):=X(S) \setminus Z(S)$. Note that $Y \subsetneq X$ is a proper
closed $\FIop$-subscheme of $X(S)$, but $Z(S)$ is not (quite) functorial
in $S$: for $\pi \in \Hom_\FI(S,T)$ and $p \in Z(T)$ it might happen
that $X(\pi)(p)$ lies in $Y(S)$ rather than in $Z(S)$.

However, if $\sigma \in \Hom_\FI(S_0,T)$ and $\pi \in \Hom_{\FI}(S,T)$
satisfy $\im(\sigma) \subseteq \im(\pi)$, then $X(\pi)$ maps the
points of $Z(T)$ where $\sigma(h)$ is nonzero to points of $Z(S)$ where
$(\pi^{-1}\sigma)(h)$ is nonzero. Consequently, in spite of the fact that
$Z$ is not an $\FIop$-scheme, as in Definition~\ref{de:ComponentFunctor}
we associate to $Z$ the component functor $\cC_Z:\FI \to \PF$ that
assigns to $S$ the set of irreducible components of $Z(S)$ and to $\pi
\in \Hom_\FI(S,T)$ the partially defined map $\cC_Z(\pi):\cC_Z(T) \to
\cC_Z(S)$ that at a component $c \in \cC_Z(T)$ is defined and takes
the value $c' \in \cC_Z(S)$ if and only if $X(\pi)$ maps $c$ dominantly
into $c'$.

Now $\cC_X(S)$ is the union of $\cC_Z(S)$ and the components in $\cC_Y(S)$
that are not contained in the closure in $X(S)$ of any component of
$Z(S)$.

By Noetherian induction using Theorem~\ref{thm:Noetherianity}, we
may assume that Theorem \ref{thm:FISchemePreComponent} holds for
$Y \subsetneq X$, that is, there exists a morphism $\phi$ from a
pre-component functor $\cC_1$ to $\cC_Y$ that is an isomorphism at
the level of species, and $\cC_1$ is constructed from model functors $S
\mapsto \cF_b(S)/\!\!\sim_{b,S}, b \in [a]$ and a collection of compatible
quasi-orders $\preceq_S, S \in \FI$, while $\phi$ is constructed from
maps $\phi_b$ that assign (disjoint or equal) locally closed subsets in
$Y(S)$ to pairs $(\sigma,\alpha) \in \cF_b(S), b \in [a]$.

We now construct a model functor $\cC_2$ and a morphism $\cC_2 \to \cC_Z$
that is an isomorphism at the level of species.

Let $L$ be the fraction field of $B'_0$ and let $X'_{L}$ be the base
change of $X'$ to $L$. Since every irreducible component of $X'(S)$
maps dominantly into $\Spec(B'_0)$, the morphism $\cC_{X'_L} \to \cC_{X'}$
is an isomorphism. So for each $S \in \FI$, the irreducible components
of $X'_{L}(S)$ are in bijection with the irreducible components of $X(S_0
\sqcup S)[1/h] \subseteq Z(S_0 \sqcup S)$. Consequently, for each injection
$\sigma:S_0 \to S$ we now have an injective map
\begin{equation}  \label{eq:Xprime2Z}
\{\text{irreducible components of $X'_{L}(S \setminus \im(\sigma))$}\}
\to
\cC_Z(S),\quad c \mapsto X(\tilde{\sigma})(c)
\end{equation}
where $\tilde{\sigma}:S \to S_0 \sqcup (S \setminus \im(\sigma))$ is the
bijection that equals $\sigma^{-1}$ on $\im(\sigma)$ and the identity
on $S \setminus \im(\sigma)$. The image of this injective map
consists precisely of the irreducible components of $Z(S)$
on which $\sigma(h)$ is not identically zero.
As we vary $\sigma$, we thus obtain all irreducible components of $Z(S)$,
indeed typically multiple times.

Let $\overline{L}$ be a separable closure of $L$, and let
$X'_{\overline{L}}$ be the base change of $X'$ to $\overline{L}$.
Now we have a surjective morphism $\cC_{X'_{\overline{L}}} \to
\cC_{X'_{L}}$ whose fibres are Galois orbits. More precisely,
let $C_{1},\ldots,C_{k}$ be the irreducible components of
$X'_{\overline{L}}([1])=Z_{\overline{L}}$, and let $G$ be the image
of the Galois group $\Gal(\overline{L}/L)$ in $\Sym([k])$ through its
action on the components $C_{1},\ldots,C_{k}$. Then, as we have seen in
\S\ref{ssec:FiniteOverWide}, $\cC_{X'_{\overline{L}}}$ is isomorphic
to the functor $S \mapsto \cE(S):=[k]^S$.  Furthermore, $\cC_{X'}
\cong \cC_{X'_{L}}$ is isomorphic to the elementary model functor $S
\mapsto \cE(S)/G$.

Now we construct the functor $\cF:\FI \to \PF$ by
\[ \cF(S):=\{(\sigma,\alpha) \mid \sigma:S_0 \to S, \alpha \in
\cE(S \setminus \im(\sigma))\}. \]
Then, by the above, we have a surjective morphism
\[ \Psi:\cF \to \cC_Z; \]
concretely, $\Psi(S)$ takes $(\sigma,\alpha) \in \cF(S)$, computes the
component of $X'_{\overline{L}}(S \setminus \im(\sigma))$
corresponding to $\alpha$, its image in $\cC_{X'_{L}}(S \setminus
\im(\sigma))$ (modding out the Galois group), and then applies
the map \eqref{eq:Xprime2Z}. To simplify notation, we will write
$\Psi((\sigma,\alpha))$ instead of $\Psi(S)((\sigma,\alpha))$.

This surjection is by no means a bijection: even ignoring the Galois
groups for a moment, on the left we have pairs of a component in
$Z(S)$ and a specified $\sigma:S_0 \to S$ such that $\sigma(h)$ is
not identically zero on that component; and on the right we just have
components of $Z(S)$. A single component of $Z(S)$ may admit many
different such maps $\sigma$. We therefore introduce an equivalence
relation $\sim_S$ on $\cF(S)$ by $(\sigma,\alpha) \sim_S (\sigma',\alpha')
:\Leftrightarrow \Psi((\sigma,\alpha))=\Psi((\sigma',\alpha'))$.  In text
we will sometimes opress the $S$ and say that $(\sigma,\alpha)$ is {\em
equivalent} to $(\sigma',\alpha')$.

We will prove that the equivalence relation $\sim_{S}$
satisfies the axioms in the definition of a model functor
(\S\ref{ssec:ModelFunctors}).

\begin{lm} \label{lm:ax3}
Suppose that $\cF(T) \ni (\sigma,\alpha) \sim_T (\sigma',\alpha')
\in \cF(T)$ and let $\pi \in \Hom_\FI(S,T)$ with $\im(\pi)
\supseteq \im(\sigma) \cup \im(\sigma')$. Then
\[ \cF(\pi)((\sigma,\alpha)) \sim_S
\cF(\pi)((\sigma',\alpha')) \]
holds.
\end{lm}

\begin{proof}
The
assumptions assert that $\Psi((\sigma,\alpha))=\Psi((\sigma',\alpha'))$
and that $\cC_Z(\pi)$ is defined at this component of $Z(T)$.
By construction, we have
\[ \Psi(\cF(\pi)((\sigma,\alpha)))
= \cC_Z(\pi)(\Psi((\sigma,\alpha)))
= \cC_Z(\pi)(\Psi((\sigma',\alpha')))
= \Psi(\cF(\pi)((\sigma',\alpha'))), \]
so that $\cF(\pi)((\sigma,\alpha)) \sim_S
\cF(\pi)((\sigma',\alpha'))$, as desired.
\end{proof}

This lemma establishes Axiom (1) for the equivalence
relations $\sim_{S}$. We continue with Axiom (2).

\begin{lm}
Let $(\sigma,\alpha) \in \cF(T)$ and let $\pi \in \Hom_\FI(S,T)$ satisfy
$\im(\pi) \supseteq \im(\sigma)$. Assume that
$\cF(\pi)((\sigma,\alpha))
\sim_{S} (\sigma'',\alpha'') \in \cF(S)$. Set $\sigma':=\pi \circ
\sigma''$. Then there exists an $\alpha' \in \cE(T)$ such that
$(\sigma',\alpha')$ lies in $\cF(T)$, is $\sim_{T}$-equivalent
to $(\sigma,\alpha)$, and satisfies $\cF(\pi)((\sigma',\alpha'))=(\sigma'',\alpha'')$.
\end{lm}

\begin{proof}
Let $C:=\Psi((\sigma,\alpha))$ be the corresponding component of
$Z(T)$ and $D$ its image in $Z(S)$ under $X(\pi)$. The fact that
$\cF(\pi)((\sigma,\alpha)) \sim_{S} (\sigma'',\alpha'') \in \cF(S)$
implies that $\sigma''(h)$ is not identically zero on $D$. Then
$\sigma'(h)=\pi(\sigma''(h))$ is not identically zero on
$C$, and hence $C=\Psi((\sigma',\tilde{\alpha}'))$ for a suitable
$\tilde{\alpha}' \in \cE(T)$. Then
$\cF(\pi)((\sigma',\tilde{\alpha}'))=(\sigma'',\tilde{\alpha}'')$ for
$\tilde{\alpha}'':=\tilde{\alpha}' \circ \pi|_{S \setminus
\im(\sigma'')}$ and we have $(\sigma'',\tilde{\alpha}'') \
\sim_{S} (\sigma'',\alpha'')$. Since the first
component $\sigma''$ of these pairs is the same, it follows that
$\alpha''=g \tilde{\alpha}''$ for some $g \in G$. Now set
$\alpha':=g \tilde{\alpha}'$. As the action of $G$ commutes with
$\cF(\pi)$, we have
$(\sigma'',\alpha'')=\cF(\pi)((\sigma',\alpha'))$, and since
$\Psi((\sigma',\alpha'))=C$, we have $(\sigma',\alpha') \sim_{T}
(\sigma,\alpha)$.
\end{proof}

Next, we extablish Axiom (3) for model functors.

\begin{lm} \label{lm:Pattern}
Assume that $(\sigma,\alpha) \in \cF(T)$ is $\sim_{T}$-equivalent to
$(\sigma',\alpha') \in \cF(T)$. Then for all all $i,j \in T \setminus
(\im(\sigma) \cup \im(\sigma'))$ we have
\[ \alpha(i)=\alpha(j) \Leftrightarrow \alpha'(i)=\alpha'(j). \]
\end{lm}

\begin{proof}
Set $C:=\Psi((\sigma,\alpha))=\Psi((\sigma',\alpha'))$, an irreducible
component of $Z(T)$. Set $D:=X(\im(\sigma))[1/(\sigma h)]$, and
note that the inclusion $\im(\sigma) \to T$ yields a dominant
morphism $C \to D$. Similarly, the inclusion $\im(\sigma') \to T$
yields a dominant morphism $C \to D':=X(\im(\sigma'))[1/(\sigma'
h)]$.

Furthermore,  let $\tilde{D}$ the image of $C$ in $X(\im(\sigma) \cup
\im(\sigma'))[1/(\sigma h),1/(\sigma' h)]$ under the morphism coming
from the inclusion $\im(\sigma) \cup \im(\sigma') \to T$. Note that
$X(\sigma)$ maps $D$ isomorphically onto $\Spec(B_0')$ and $X(\sigma')$
maps $D'$ isomorphically onto $\Spec(B_0')$, and that $\tilde{D}$ maps
dominantly into $D$ and into $D'$.

Now let $M$ be the field of rational functions on $D$, and $M'$ the field
of rational functions on $D'$. Note that $\sigma$ and $\sigma'$ give rise
to isomorphisms from $L$ to $M$ and $M'$, respectively. We extend these
isomorphisms to isomorphisms from $\overline{L}$ to separable closures
$\overline{M}$ and $\overline{M'}$ of $M,M'$. This yields isomorphisms
from $\Gal(\overline{L}/L)$ to the Galois groups $\Gal(\overline{M}/M)$
and $\Gal(\overline{M'}/M)$.

The base change $C_{\overline{M}}$ equals
\[ \bigcup_{\beta \in G \alpha} \prod_{i
\in T \setminus \im(\sigma)} C_{\beta_i}; \]
here the product is over $\overline{M}$ and $C_{\beta_i}$ is regarded
as a variety over $\overline{M}$ via the isomorphism $\overline{L} \to
\overline{M}$; and similarly for $(\sigma',\alpha')$.  The base change
$\tilde{D}_{\overline{M}}$ splits as a similar union of products, but
now over $\beta \in G \alpha|_{\im(\sigma') \setminus \im(\sigma)}$.

Let $\tilde{M} \supseteq M \cup M'$ be the field of rational functions of
$\tilde{D}$, and let $\overline{\tilde{M}} \supseteq \overline{M} \cup
\overline{M'}$ be a separable closure of $\tilde{M}$.  The components
of the base change $C_{\overline{\tilde{M}}}$ can then be computed in
two different ways: by first doing a base change to $\overline{M}$ or
by first doing a base change to $\overline{M'}$. For the first route,
we have to analyse what happens to the product over $\overline{M}$
\[ \prod_{i \in T \setminus \im(\sigma)} C_{\beta_i} \] when doing
a base change to $\overline{\tilde{M}}$. Write this product as $V
\times_{\overline{M}} W$, where $V$ is the product over all $i \in
\im(\sigma') \setminus \im(\sigma)$, and hence an irreducible component
of $\tilde{D}_{\overline{M}}$; and $W$ is the product over all $i \in T \setminus
(\im(\sigma) \cup \im(\sigma'))$. The function field
$\tilde{M}$ of the irreducible $M$-scheme $\tilde{D}$ embeds into the
function field of any component of $\tilde{D}_{\overline{M}}$, hence
in particular into $\overline{M}(V)$, and this field
extension is algebraic, so that $\overline{\tilde{M}} \cong
\overline{\overline{M}(V)}$. So the base change to
$\overline{\tilde{M}}$ of
the product $V \times_{\overline{M}} W$ is just the base change of $W$ over
the separably closed field $\overline{M}$
with the field extension $\overline{\overline{M}(V)}$ and
hence still irreducible (e.g. by \cite[Tag 020J]{stacks-project}).

Summarising, we obtain a bijection from the irreducible components of
$C_{\overline{M}}$, which are labelled by elements of $G \alpha$, to
the irreducible components of $C_{\overline{\tilde{M}}}$,
and that bijection is evidently $\Sym(T \setminus
(\im(\sigma) \cup \im(\sigma')))$-equivariant. As a
consequence, for $i,j \in T \setminus (\im(\sigma) \cup \im(\sigma'))$ we have
$\alpha(i)=\alpha(j)$ if and only if the transposition $(i\ j)$
preserves some (and then each) component in $C_{\overline{M}}$, if and
only if that transposition preserves some (and then each) component in
$C_{\overline{\tilde{M}}}$.

Similarly, we obtain a bijection from the irreducible components of
$C_{\overline{M'}}$, which are labelled by elements of $G \alpha'$, to
the irreducible components of $C_{\overline{\tilde{M}}}$, with the same
remark about compatibility with the transposition $(i\ j)$. Combining
these results, we find that for $i,j \in T \setminus (\im(\sigma)
\cup \im(\sigma'))$ we have $\alpha(i)=\alpha(j)$ if and only if
$\alpha'(i)=\alpha'(j)$.
\end{proof}

We have now concluded the proof that $\cC_2:S \mapsto \cF(S)/\!\!\sim_S$ is
a model functor; and by construction, the map $\cC_2 \to \cC_Z$ induced
by $\Psi$ is a morphism that is an isomorphism at the level of species.

Finally, we combine the pre-component functor $\cC_1$ (mapping onto
$\cC_Y$) and the model functor $\cC_2$ (mapping onto $\cC_Z$) as
follows. Recall that $\cC_1$ is constructed by taking the equivalence
classes of maximal elements in $\bigsqcup_{b \in [a]} \cF_b(S)$.  We now
set $\cF_{a+1}:=\cF(S)$, and we extend $\preceq_S$ from $\bigsqcup_{b
\in [a]} \cF_b(S)$ to $\bigsqcup_{b \in [a+1]} \cF_b(S)$ by setting it
equal to $\sim_S$ on $\cF_{a+1}(S)=\cF(S)$, and for $(\sigma,\alpha)
\in \cF_b(S)$ with $b \leq a$ and $(\sigma',\alpha') \in \cF_{a+1}(S)$
we set $(\sigma,\alpha) \preceq_S (\sigma',\alpha')$ if and only if the
irreducible locally closed subset $\phi_b((\sigma,\alpha))$ of $Y(S)$
is contained in the Zariski closure of the irreducible component
$\phi_{a+1}((\sigma',\alpha')):=\Psi((\sigma',\alpha'))$ of $Z(S)$.
Properties (1)--(4) in Theorem~\ref{thm:FISchemePreComponent} are then
straightforward, and so are the properties Compatibility (1) and (2) from
the definition of pre-component functors. For instance, Compatibility
(1) follows from the fact that no component of $Z(S)$ can be contained
in $Y(S)$.

Regarding Compatibility (3): for $b=b'=a+1$ this is just Axiom (1), verified in Lemma~\ref{lm:ax3}; and for
$b \leq a, b'=a+1$ it follows from the simple fact that if the irreducible locally
closed set $\phi_b((\sigma,\alpha))$ in $Y(T)$ is contained in the Zariski
closure of the component $\Psi((\sigma',\alpha'))$ of $Z(T)$, then the
same holds for their projections in $Y(S)$ and $Z(S)$, respectively,
along the map $X(\pi)$.

With this quasi-order, the pre-component functor $\cC_1$ and the model
functor $\cC_2$ are combined into a pre-component functor $\cC_3$ with
an obvious morphism to $\cC_X$ that is an isomorphism at the level
of species.
\end{proof}

\subsection{Proof of the Main Theorem} \label{ssec:Proof}

\begin{proof}[Proof of Theorem~\ref{thm:Main}]
Before the Main Theorem we introduced $\FIop$-schemes slightly
differently than we do in \S\ref{sec:FISchemes}, but the two definitions
are equivalent via Remark~\ref{re:Sn}. So we may assume that $X$ is
a width-one $\FIop$-scheme of finite type over a Noetherian ring $K$
in the sense of \S\ref{sec:FISchemes}. Furthermore, since the Main
Theorem only makes a statement about the underlying topological space
of $X([n])$ for $n \gg 0$, we may assume that $X$ is both nice and
reduced.

By Theorem~\ref{thm:FISchemePreComponent}, the component functor $\cC_X$ of
$X$ is, at the level of species, isomorphic to a pre-component functor
$\cC$. In particular, for all $S$, the number of $\Sym(S)$-orbits on
$\cC_X(S)$ equals the number of $\Sym(S)$-orbits on $\cC(S)$.
By Theorem~\ref{thm:PreComponent}, this number is a
quasipolynomial in $|S|$ for all sufficiently large $S$. This
proves the Main Theorem and concludes our paper.
\end{proof}

\bibliographystyle{alpha}
\bibliography{draismajournal,diffeq,draismapreprint}

\end{document}